\newtheorem{The}{Theorem}[section]
\newtheorem{Lem}[The]{Lemma}
\newtheorem{Prop}[The]{Proposition}
\newtheorem{Cor}[The]{Corollary}
\newtheorem{Rem}[The]{Remark}
\newtheorem{Def}[The]{Definition}
\newcommand{\C}{\mathbb{C}}
\newcommand{\R}{\mathbb{R}}
\newcommand{\N}{\mathbb{N}}
\newcommand{\U}{\underline{u}}
\begin{document}
\title[Parabolic complex Monge-Amp\`ere equations]{Weak solution of Parabolic complex Monge-Amp\`ere equation}

\setcounter{tocdepth}{1}

  \author{Do Hoang Son } 

\email{hoangson.do.vn@gmail.com }
 \date{\today}
 \begin{abstract}
   We study the equation $\dot{u}=\log\det (u_{\alpha\bar{\beta}})-Au+f(z,t)$ in  domains
   of $\C^n$. This equation has a close connection with the K\"ahler-Ricci flow. In this paper, we consider the case where the boundary condition is smooth and the initial condition is irregular.
 \end{abstract} 
 \maketitle
 
 \tableofcontents
 \newpage
 %Introduction............................................................................................
 \section{Introduction}
 Let $\Omega$ be a bounded smooth strictly pseudoconvex domain of $\C^n$, i.e., there exists a smooth
 strictly plurisubharmonic function $\rho$ defined on a bounded neighbourhood of $\bar{\Omega}$ such that 
 $$
 \Omega=\{\rho<0\}.
 $$
  Let $A\geq 0,T>0$. We consider the equation
 \begin{equation}\label{KRF}
 \begin{cases}
 \begin{array}{ll}
 \dot{u}=\log\det (u_{\alpha\bar{\beta}})-Au+f(z,t)\;\;\;&\mbox{on}\;\Omega\times (0,T),\\
 u=\varphi&\mbox{on}\;\partial\Omega\times [0,T),\\
 u=u_0&\mbox{on}\;\bar{\Omega}\times\{ 0\},\\
 \end{array}
 \end{cases}
 \end{equation}
 where $\dot{u}=\frac{\partial u}{\partial t}$,
  $u_{\alpha\bar{\beta}}=\frac{\partial^2 u}{\partial z_{\alpha}\partial\bar{z}_{\beta}}$,
  $u_0$ is a plurisubharmonic function in a neighbourhood of $\bar{\Omega}$
  and $\varphi, f$ are smooth in $\bar{\Omega}\times [0,T]$.
  
  If $u_0$ is a smooth strictly plurisubharmonic function on $\bar{\Omega}$ and satisfies 
  the compatibility condition
  on $\partial\Omega\times\{0\}$
  $$
   \dot{\varphi}=\log\det (u_0)_{\alpha\bar{\beta}}-Au_0+f(z,0),
   $$
   then \eqref{KRF} admits a unique smooth solution \cite{HL10}. 
   %the following to explain relationship with previous work
   After studying the case where $u_0$ is continuous or just bounded \cite{Do15}, 
we want to understand the situation when 
   $u_0$ is a more general plurisubharmonic functions, first with zero Lelong numbers, then
   in some special cases where positive Lelong numbers are involved.
   
   On compact K\"ahler manifolds, the corresponding problem was considered and solved \cite{GZ13, DL14}.
 By using approximations and a priori estimates, it was shown that the Parabolic complex Monge-Amp\`ere 
 equation admits a unique solution in a sense close to classical solution. We expect that a similar 
 situation obtains on the domain $\Omega$. 
 
 In order to study the situation with irregular initial data,
 we give a notion of ``weak solution" for \eqref{KRF}, consider the existence 
 of weak solutions, and ``describe" weak solutions in some particular cases.
 
  The function $u\in USC(\bar{\Omega}\times [0,T))$ (upper semicontinuous function) is called a 
  \emph{weak solution} of 
  \eqref{KRF} if there exist $u_m\in C^{\infty}(\bar{\Omega}\times [0,T))$ satisfying 
  \begin{equation}\label{KRF_weak}
   \begin{cases}
   \begin{array}{ll}
   u_m(.,t)\in SPSH(\Omega),\\
   \dot{u}_m=\log\det (u_m)_{\alpha\bar{\beta}}-Au_m+f(z,t)\;\;\;&\mbox{on}\;\Omega\times (0,T),\\
   u_m\searrow\varphi&\mbox{on}\;\partial\Omega\times [0,T),\\
   u_m\searrow u_0&\mbox{on}\;\bar{\Omega}\times\{ 0\},\\
   u=\lim\limits_{m\rightarrow \infty} u_m.
   \end{array}
   \end{cases}
   \end{equation}
   where $SPSH(\Omega)=\{\mbox{strictly plurisubharmonic functions on } \Omega\}$.
   
   For the convenience, we also denote by $k_A$ the functions
    \begin{equation}\label{kA.eq}
            \begin{cases}
            k_A(x,t)=x-2nt\;\;\mbox{ if } \;\; A=0,\\
            k_A(x,t)=-\frac{2n}{A}+\left(\frac{2n}{A}+x\right)e^{-At}\;\;\mbox{ if }\;\;  A>0,
            \end{cases}
   \end{equation}
   where $x>0$ and $0<t<T$, and by $\epsilon_A$ the value of $t$ such that $k_A(x,t)=0$:
     \begin{equation}\label{epsilonA.eq}
   \begin{cases}
       \epsilon_A(x)=\frac{x}{2n} \mbox{ if } A=0,\\
       \epsilon_A(x)=\frac{1}{A}(\log (Ax+2n)-\log (2n))  \mbox{ if } A>0.
    \end{cases}
   \end{equation}

   Our main results are the following:
   \begin{The}\label{main.existence}
   Let $A\geq 0, T>0$ and $\Omega$ be a bounded smooth strictly pseudoconvex domain of $\C^n$.
    Let $\varphi, f$ be smooth functions in $\bar{\Omega}\times [0,T]$ 
     and $u_0$ be a plurisubharmonic function in a neighbourhood of $\bar{\Omega}$
    such that $u_0(z)=\varphi(z,0)$ for any $z\in\partial\Omega$.
     Then \eqref{KRF} has a unique weak solution.
   \end{The}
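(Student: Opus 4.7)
The plan is to realize the weak solution as the decreasing limit of classical solutions furnished by \cite{HL10}, and to exploit the parabolic comparison principle both for existence and uniqueness.

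\textbf{Approximation and construction of the sequence.} First, approximate $u_0$ by a decreasing sequence $u_0^m\searrow u_0$ of smooth strictly plurisubharmonic functions on a neighborhood of $\bar{\Omega}$, obtained by mollification on a slightly larger pseudoconvex neighborhood together with a small quadratic correction. Next, perturb $\varphi$ to a decreasing family $\varphi_m\searrow\varphi$ in $C^\infty(\bar{\Omega}\times[0,T))$ chosen so that $\varphi_m(z,0)=u_0^m(z)$ on $\partial\Omega$ and the compatibility condition
$$
\dot\varphi_m(z,0)=\log\det (u_0^m)_{\alpha\bar{\beta}}(z)-A u_0^m(z)+f(z,0),\qquad z\in\partial\Omega,
$$
holds; this is arranged by a $C^\infty$ correction supported near $t=0$. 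The existence theorem of \cite{HL10} then yields a unique smooth solution $u_m$ of \eqref{KRF} with data $(u_0^m,\varphi_m)$.

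\textbf{Monotonicity and lower bound.} By the parabolic comparison principle for the complex Monge--Amp\`ere operator, the orderings $u_0^{m+1}\leq u_0^m$ and $\varphi_{m+1}\leq\varphi_m$ force $u_{m+1}\leq u_m$, so the pointwise limit $u(z,t):=\lim_{m\to\infty}u_m(z,t)$ exists in $[-\infty,\infty)$ and is automatically USC. The main obstacle is to secure a lower bound guaranteeing that $u\not\equiv-\infty$: since $u_0$ may take the value $-\infty$ on a pluripolar set, no smooth subsolution lies globally below $u_0$. This is where the auxiliary functions $k_A$ and $\epsilon_A$ of \eqref{kA.eq}--\eqref{epsilonA.eq} enter. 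I expect a subsolution argument (presumably packaged as a preceding lemma) producing $u_m(z,t)\geq\psi(z,t)$, where $\psi$ depends on $u_0$, is finite on $\bar{\Omega}\times[\delta,T)$ for every $\delta>0$, and is independent of $m$; the $k_A$, $\epsilon_A$ formulas quantify precisely the parabolic damping of an initial ``spike'' of size $x$ over a time interval $\epsilon_A(x)$. Passing to the limit then yields an honest USC function $u$ with the prescribed boundary and initial traces.

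\textbf{Uniqueness.} Let $u=\lim u_m$ and $\tilde u=\lim\tilde u_k$ be two weak solutions arising from defining sequences $\{u_m\}$ with initial data $\{u_0^m\}$ and $\{\tilde u_k\}$ with initial data $\{\tilde u_0^k\}$. By symmetry it suffices to show $\tilde u\leq u$. Fix $\epsilon>0$ and an index $m$. Since $\tilde u_0^k$ is a decreasing sequence of continuous functions on $\bar{\Omega}$ converging pointwise to the USC function $u_0\leq u_0^m$, a Dini-type argument (the open sets $\{\tilde u_0^k-u_0^m>\epsilon\}$ are nested with empty intersection, hence empty for $k$ large by compactness) gives $\tilde u_0^k\leq u_0^m+\epsilon$ on $\bar{\Omega}$ for $k$ large; the same reasoning gives $\tilde\varphi_k\leq\varphi_m+\epsilon$ on $\partial\Omega\times[0,T)$. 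A direct check shows that $w:=u_m+\epsilon+A\epsilon t$ is a supersolution of \eqref{KRF}, so the parabolic comparison principle yields $\tilde u_k\leq w$ on $\bar{\Omega}\times[0,T)$. Letting successively $k\to\infty$, $m\to\infty$, and $\epsilon\to 0$ gives $\tilde u\leq u$, and symmetry completes the proof.
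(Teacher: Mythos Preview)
Your overall strategy---approximate, apply Hou--Li, pass to the limit, and compare for uniqueness---matches the paper's, and your uniqueness argument is essentially the paper's Proposition~\ref{compa.prop.weak sec}. But there is a genuine gap in the existence half, and one paragraph is a red herring.

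\textbf{The gap.} You assert that one can choose $\varphi_m\searrow\varphi$ while simultaneously enforcing both $\varphi_m(\cdot,0)=u_0^m$ on $\partial\Omega$ and the compatibility condition $\dot\varphi_m(\cdot,0)=\log\det(u_0^m)_{\alpha\bar\beta}-Au_0^m+f(\cdot,0)$. The compatibility condition forces the first-order Taylor coefficient of $\varphi_m$ in $t$ to involve $\log\det(u_0^m)_{\alpha\bar\beta}$, which has no monotonicity in $m$ (and may even diverge as $m\to\infty$); a ``$C^\infty$ correction supported near $t=0$'' cannot by itself force $\varphi_m\geq\varphi_{m+1}$ for small $t$. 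The paper does \emph{not} make $\varphi_m$ decreasing. It constructs $\varphi_m$ converging to $\varphi$ only uniformly on $\partial\Omega\times[0,T)$, passes to a subsequence with $\sup_{\partial\Omega\times[0,T)}|\varphi_{m_k}-\varphi|\leq 2^{-k-1}$, and then uses comparison to obtain the \emph{almost}-decreasing estimate $u_{m_k}+2^{-k}\geq u_{m_{k+1}}$. The missing step you need is Lemma~\ref{weaker condition.lem.sec weak}: the corrected sequence $v_k=u_{m_k}+2^{-k+1}e^{-At+AT}$ is genuinely decreasing, solves the same equation, and has the same limit, so that limit is a weak solution in the sense of \eqref{KRF_weak}.

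\textbf{The red herring.} Your ``main obstacle''---a uniform lower bound preventing $u\equiv-\infty$, invoking $k_A$ and $\epsilon_A$---plays no role in the existence/uniqueness proof. The definition of weak solution only requires $u\in USC(\bar\Omega\times[0,T))$, which is automatic for a decreasing limit of continuous functions; the paper never establishes any lower bound at this stage. The functions $k_A$, $\epsilon_A$ enter only later (Proposition~\ref{singular.prop.weak sec}, Theorems~\ref{main.nonsmooth} and~\ref{main.description}) to \emph{quantify} the singularities of $u$ when $u_0$ has positive Lelong numbers, not to rule them out.
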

   \begin{The}\label{main.zeroLelong}
   Suppose that the conditions of Theorem \ref{main.existence} are satisfied. Suppose also that
   $u_0$ has zero Lelong numbers, i.e., 
        $$\nu_{u_0}(a)=\lim\limits_{r\rightarrow 0}\dfrac{\sup_{|z-a|<r}u_0(z)}{\log r}=0,$$
  for any $z\in\Omega$. Then the weak solution $u$ satisfies
   \begin{itemize}
   \item[(a)]$u\in C^{\infty}(\bar{\Omega}\times (0,T))$.
   \item[(b)] $\dot{u}=\log\det u_{\alpha\bar{\beta}}-Au+f(z,t)$ in $\bar{\Omega}\times (0,T)$.
   \item[(c)]  $u(.,t)\stackrel{L^1}{\longrightarrow}u_0$ as $t\searrow 0;\;
        u|_{\partial\Omega\times [0,T)}=\varphi|_{\partial\Omega\times [0,T)}$.\\
   \end{itemize}
   \end{The}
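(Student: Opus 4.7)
The plan is to exhibit the weak solution as a decreasing limit $u=\lim u_m$ of smooth classical solutions of \eqref{KRF} whose initial data $u_{0,m}$ are smooth strictly plurisubharmonic approximations to $u_0$, and then to derive a priori estimates on the $u_m$ that are uniform in $m$ on compact subsets of $\bar\Omega\times(0,T)$. The zero Lelong number hypothesis is precisely what prevents these estimates from degenerating as $t\searrow 0$.

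\emph{Construction.} A Demailly-type regularization, corrected near $\partial\Omega$, produces $u_{0,m}\in C^\infty\cap SPSH$ in a neighborhood of $\bar\Omega$ with $u_{0,m}\searrow u_0$, matching $\varphi(\cdot,0)$ on $\partial\Omega$ and satisfying the zeroth-order compatibility condition. Since $u_0$ has zero Lelong numbers everywhere, the Lelong numbers $\nu_m$ of $u_{0,m}$ on any compact $K\Subset\Omega$ may be arranged to tend to $0$. The smooth existence result of \cite{HL10} then yields $u_m\in C^\infty(\bar\Omega\times[0,T))$ solving \eqref{KRF} with initial data $u_{0,m}$, and Theorem \ref{main.existence} identifies $u=\lim u_m$ with the weak solution.

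\emph{Uniform estimates away from $t=0$.} Fix $\tau\in(0,T)$ and $T'<T$; the task is to bound $\|u_m\|_{C^k(\bar\Omega\times[\tau,T'])}$ independently of $m$. An upper bound comes from the maximum principle against a linear-in-$t$ perturbation of a smooth envelope of the $u_{0,m}$. The crucial lower bound uses $k_A,\epsilon_A$ from \eqref{kA.eq}-\eqref{epsilonA.eq}: locally one has $u_{0,m}(z)\ge\nu_m\log|z-a|-C$, and a Kiselman-type barrier whose singular coefficient evolves by $\dot x=-Ax-2n$ (i.e.~traces out $k_A$) shows that the singular part is absorbed by time $\epsilon_A(\nu_m)$, giving a uniform $C^0$ bound on $\bar\Omega\times[\tau,T']$ for $m$ large because $\nu_m\to 0$. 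Differentiating the equation in $t$ and applying the maximum principle yields $\dot u_m$-bounds; Yau's computation of $\Delta\log\Delta u_m$ then feeds on the $C^0$ and $\dot u_m$-estimates to give a global Laplacian bound; the strict pseudoconvexity of $\Omega$ supplies boundary $C^2$ barriers; and parabolic Evans-Krylov followed by Schauder bootstrap upgrades these to uniform $C^\infty$ estimates.

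\emph{Conclusion and main obstacle.} The uniform $C^\infty$ bounds together with the monotone convergence $u_m\searrow u$ imply $u\in C^\infty(\bar\Omega\times(0,T))$ with \eqref{KRF} holding classically, giving (a) and (b). The identity $u=\varphi$ on $\partial\Omega\times[0,T)$ follows from the construction combined with the pseudoconvex barriers, and the $L^1$-convergence $u(\cdot,t)\to u_0$ as $t\searrow 0$ comes from $u_m(\cdot,t)\to u_{0,m}$ continuously (hence in $L^1$) for fixed $m$ coupled with $u_{0,m}\searrow u_0$ in $L^1$, via a standard diagonal/monotonicity argument. The genuine difficulty is the lower $C^0$ bound in the previous step: without the zero Lelong number assumption the healing time $\epsilon_A(\nu_m)$ stays bounded away from $0$, and Yau's estimate — which consumes the $C^0$ bound — would lose uniformity near $t=0$. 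Propagating the explicit smoothing encoded by $k_A,\epsilon_A$ through the Laplacian and higher-order estimates while keeping constants independent of $m$ is the technical core of the proof.
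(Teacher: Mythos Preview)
Your overall architecture (approximate, get uniform estimates, pass to the limit) is right, and parts (a)--(c) would indeed follow once a uniform $C^0$ bound on $u_m(\cdot,t)$ for $t\ge\tau>0$ is established. But the mechanism you propose for that bound does not work, and this is the heart of the matter.

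The claim ``locally $u_{0,m}(z)\ge\nu_m\log|z-a|-C$ with $\nu_m\to0$'' is not a consequence of the zero Lelong number hypothesis. The Lelong number at $a$ controls only $\sup_{|z-a|=r}u_0/\log r$; it yields no pointwise lower bound on $u_0$. For instance $u_0(z_1,z_2)=-\sqrt{-\log|z_1|}-\sqrt{-\log|z_2|}$ (suitably truncated) is plurisubharmonic with all Lelong numbers zero, yet equals $-\infty$ on $\{z_1z_2=0\}$, so no inequality $u_0\ge\nu\log|z-a|-C$ can hold. For the smooth $u_{0,m}$ the inequality is trivially true, but with constants $C=C_m\to\infty$, and your barrier then produces nothing uniform in $m$. (Note also that the $u_{0,m}$, being smooth, already have all Lelong numbers equal to zero, so your ``$\nu_m$'' is not meaningfully defined.)

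The paper obtains the $C^0$ bound by a different route. At a fixed time $\epsilon\in(0,T)$ one freezes the parabolic equation as an elliptic Monge--Amp\`ere problem
\[
(dd^cu_m(\cdot,\epsilon))^n=F_m\,dV,\qquad F_m=\exp\bigl(\dot u_m(\cdot,\epsilon)+Au_m(\cdot,\epsilon)-f(\cdot,\epsilon)\bigr).
\]
The $\dot u$-estimate (Lemma~\ref{dotu.lem}) gives $F_m\le C\exp(-2u_0/\epsilon)$. The correct use of the zero Lelong number hypothesis is Skoda's integrability theorem: it implies $e^{-cu_0}\in L^p(\Omega)$ for every $c>0$ and $p\ge1$, so the $F_m$ lie in $L^p$ uniformly in $m$. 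Ko{\l}odziej's $L^\infty$ estimate then gives $\|u_m(\cdot,\epsilon)\|_{L^\infty(\bar\Omega)}\le C$ independent of $m$, hence $u(\cdot,\epsilon)$ is bounded. At this point the paper does not redo Yau's Laplacian estimate or Evans--Krylov; it simply invokes the already-proved bounded-initial-data case \cite{Do15} on $[\epsilon,T)$ and lets $\epsilon\searrow0$. Your argument for (c), via Lemma~\ref{utminusu0.lem.weak sec}, upper semicontinuity of $u$, and dominated convergence, is essentially what the paper does.
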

   \begin{The}\label{main.nonsmooth}
      Suppose that the conditions of Theorem \ref{main.existence} are satisfied. If there is $a\in\Omega$ such that the Lelong number of $u_0$ at $a$ is positive
       then there is no $u\in C^{\infty}(\bar{\Omega}\times (0,T))$
      satisfying
      \begin{equation}\label{eq.KRF.main3}
       \begin{cases}
       \begin{array}{ll}
       u(.,t)\in SPSH(\bar{\Omega}),&\forall t\in (0,T),\\
       \dot{u}=\log\det (u_{\alpha\bar{\beta}})-Au+f(z,t)\;\;\;&\mbox{on}\;\bar{\Omega}\times (0,T),\\
       u=\varphi&\mbox{on}\;\partial\Omega\times [0,T),\\
       u(.,t)\stackrel{L^1}{\longrightarrow}u_0.\\
       \end{array}
       \end{cases}
       \end{equation}
      \end{The}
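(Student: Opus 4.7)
The plan is a proof by contradiction. Suppose $u\in C^\infty(\bar\Omega\times(0,T))$ satisfies \eqref{eq.KRF.main3}. Setting $\nu_0:=\nu_{u_0}(a)>0$, I will show that the equation together with the $L^1$-convergence $u(\cdot,t)\to u_0$ forces $u(\cdot,t)$ to have a logarithmic pole at $a$ of strength at least $k_A(\nu_0,t)$ for small $t>0$, which is incompatible with smoothness on $\bar\Omega$. Specifically, fix $\delta\in(0,\nu_0)$, put $\nu':=\nu_0-\delta$, and choose $T'<\min(T,\epsilon_A(\nu'))$ so that $k_A(\nu',t)>0$ on $[0,T']$. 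The definition of Lelong number provides $r_0>0$ and $C_0>0$ with $u_0(z)\leq\nu'\log|z-a|+C_0$ on $B(a,2r_0)\Subset\Omega$. For $\epsilon\in(0,r_0)$, introduce the smooth barrier
\[
\psi_\epsilon(z,t):=u(z,t)-\tfrac{1}{2}k_A(\nu',t)\log\bigl(|z-a|^2+\epsilon^2\bigr),
\]
and aim to prove $\sup_{B(a,r_0)\times(0,T')}\psi_\epsilon\leq C$ uniformly in $\epsilon$. Once this is established, sending $\epsilon\to 0$ yields $u(z,t)\leq k_A(\nu',t)\log|z-a|+C$ on $(B(a,r_0)\setminus\{a\})\times(0,T')$, and letting $z\to a$ with $k_A(\nu',t)>0$ forces $u(a,t)=-\infty$, contradicting smoothness.

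The uniform estimate on $\psi_\epsilon$ is established via the parabolic maximum principle on $\bar B(a,r_0)\times[t_0,T']$, then taking $t_0\to 0$. At an interior maximum, $(\psi_\epsilon)_{\alpha\bar\beta}\leq 0$ gives $u_{\alpha\bar\beta}\leq k_A(\nu',t)\cdot\bigl(\tfrac{1}{2}\log(|z-a|^2+\epsilon^2)\bigr)_{\alpha\bar\beta}$. Combining this with the explicit determinant
\[
\det\bigl(\tfrac{1}{2}\log(|z-a|^2+\epsilon^2)\bigr)_{\alpha\bar\beta}=\frac{\epsilon^2}{2^n(|z-a|^2+\epsilon^2)^{n+1}},
\]
with the equation, with $\dot\psi_\epsilon\geq 0$ at the maximum, and with the defining ODE $\dot k_A=-Ak_A-2n$, a direct computation produces
\[
A\psi_\epsilon\leq n\log\bigl(k_A(\nu',t)/2\bigr)+\log\bigl(\epsilon^2/(|z-a|^2+\epsilon^2)\bigr)+\sup f.
\]
For $A>0$ the bracketed $\log$ is non-positive, so this bounds $\psi_\epsilon$ directly and uniformly in $\epsilon$. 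For $A=0$ the same inequality instead forces $|z-a|^2+\epsilon^2\leq C\epsilon^2$ at any interior maximum, and a small perturbation of the barrier (for instance replacing $k_A(\nu',t)$ by $k_A(\nu',t)-\eta$ for a tiny $\eta>0$, or combining with a Kiselman-type estimate) is needed to rule out such a configuration. On the lateral boundary $\partial B(a,r_0)\times[t_0,T']$ the estimate is immediate: $|z-a|=r_0$ keeps the logarithm bounded, and $u$ is bounded above by $\sup_{\partial\Omega\times[0,T]}\varphi$ via the maximum principle for the plurisubharmonic $u(\cdot,t)$.

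The main obstacle is the initial slice $\bar B(a,r_0)\times\{t_0\}$: I need $\psi_\epsilon(\cdot,t_0)\leq C$ uniformly in both $\epsilon$ and $t_0\to 0$, while the hypothesis supplies only $L^1$-convergence $u(\cdot,t)\to u_0$, which is not uniform near the singular point $a$. To bridge this gap, I would combine the sub-mean value inequality for the strictly plurisubharmonic $u(\cdot,t_0)$ with the $L^1$-convergence: for $z$ with $R:=|z-a|\in[2\epsilon,r_0]$, sub-mean value over $B(z,R/2)\subset B(a,3r_0/2)$ together with the initial bound $u_0\leq\nu'\log|\cdot-a|+C_0$ yields
\[
\limsup_{t_0\to 0}u(z,t_0)\leq\nu'\log(3R/2)+C_0\quad\text{pointwise,}
\]
and a Hartogs-type upgrade, made possible by the slack $\delta=\nu_0-\nu'>0$, converts this into the uniform bound $\psi_\epsilon(\cdot,t_0)\leq C$ demanded by the maximum principle. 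Reconciling $L^1$-initial data with the pointwise demands of the maximum principle near the singularity is the technical crux of the proof.
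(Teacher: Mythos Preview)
Your barrier computation at an interior maximum is essentially correct (and close in spirit to the paper's Proposition~\ref{singular.prop.weak sec}), but the argument has a genuine gap at the step you yourself identify as the crux: controlling $\psi_\epsilon(\cdot,t_0)$ on $\bar B(a,r_0)$ uniformly in $\epsilon$ and $t_0\to 0$. The sub-mean value inequality over $B(z,R/2)$ with $R=|z-a|$ gives
\[
u(z,t_0)\le \frac{1}{|B(z,R/2)|}\int_{B(z,R/2)}u_0 \;+\; \frac{1}{|B(z,R/2)|}\,\|u(\cdot,t_0)-u_0\|_{L^1},
\]
and the second term is of size $R^{-2n}\cdot o(1)$. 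For $R\sim\epsilon$ this blows up like $\epsilon^{-2n}$, so you do not obtain a bound uniform in $\epsilon$. A Hartogs-type lemma gives $\limsup_{t_0\to 0}u(\cdot,t_0)\le u_0$ only uniformly on compacta of $\Omega\setminus\{a\}$, which is again not enough as $\epsilon\to 0$. The slack $\delta=\nu_0-\nu'$ does not absorb a polynomial loss of order $\epsilon^{-2n}$. In addition, your $A=0$ interior analysis is left unfinished; the suggested perturbation $k_A(\nu',t)\mapsto k_A(\nu',t)-\eta$ is just $k_0(\nu'-\eta,t)$ and changes nothing.

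The paper closes this gap by a different and cleaner decomposition. It first proves (Lemma~\ref{utminusu0.lem.weak sec}) the lower bound $u(z,t)\ge u(z,s)-C(t-s)$ with $C(\tau)\to 0$, depending only on the boundary data and $f$; this ``almost monotonicity'' upgrades $L^1$ convergence to pointwise decreasing convergence $u(\cdot,t_m)+2^{-m+1}\searrow u_0$ along a sequence $t_m\searrow 0$. From this one deduces (Proposition~\ref{L1weak.prop.secweak}) that any smooth $u$ satisfying \eqref{eq.KRF.main3} is the unique weak solution of \eqref{KRF}. The barrier comparison (Proposition~\ref{singular.prop.weak sec}) is then run against the smooth decreasing approximants in the definition of weak solution, so the initial inequality $v_m(\cdot,0)\ge u_0$ is immediate and no $L^1$-to-pointwise passage near $a$ is needed. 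Your direct approach would become viable if you first established and used this monotonicity lemma to control the initial slice; without it, the maximum principle step cannot be closed.
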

   \begin{The}\label{main.description}
   Suppose that the conditions of Theorem \ref{main.existence} are satisfied. Suppose also that 
    $$\sum\limits_{j=1}^l n_j\log |z-a_j|+C_0\geq u_0\geq \sum\limits_{j=1}^l N_j\log |z-a_j|-C_0,$$ 
    where  $l\in\N, a_j\in\Omega, N_j\geq n_j>0$.
   Then the weak solution $u$ satisfies
   \begin{itemize}
   \item[(a)] $u\in C^{\infty}(Q)$, where $Q=(\bar{\Omega}\times (0,T))\setminus
    (\cup(\{a_j\}\times (0,\epsilon_A(n_j)])$.
    \item[(b)] $u=-\infty$ on $\cup(\{a_j\}\times [0,\min\{T,\epsilon_A(N_j)\}))$.\\
    Moreover, if $n_j=N_j$ then 
    $$\nu_{u(.,t)}(a_j)= k(N_j,t),$$
    for any $0<t<\min\{T,\epsilon_A(N_j)\}$.
    \item[(c)] $\dot{u}=\log\det u_{\alpha\bar{\beta}}-Au+f(z,t)$ in $Q$.\\
    \item[(d)] $u(.,t)\stackrel{L^1}{\longrightarrow}u_0$ when $t\searrow 0;\;
     u|_{\partial\Omega\times [0,T)}=\varphi|_{\partial\Omega\times [0,T)}$.\\
     Moreover, $u(.,t)\rightarrow u_0$ ``uniformly in capacity", i.e.,
            if $\epsilon>0$ and $\bar{\Omega}\Subset\tilde{\Omega}$ then there exists 
            an open set $U_{\epsilon}$ such that
            \begin{center}
            $Cap_{\tilde{\Omega}}(\bar{\Omega}\setminus U_{\epsilon})\leq \epsilon$,\\
            $u(.,t)\rightrightarrows u_0$ on $\bar{\Omega}\cap U_{\epsilon}$ as $t\searrow 0$.
            \end{center}
   \end{itemize}
   \end{The}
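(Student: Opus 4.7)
The plan is to sandwich the weak solution $u$ of Theorem~\ref{main.existence} between two explicit barriers built from the hypothesized pointwise bounds on $u_0$, and then invoke interior parabolic regularity on $Q$ where those barriers are finite. By the comparison principle for the smooth solutions of \eqref{KRF} applied along the approximating sequence $u_m$, the bounds $\sum n_j \log|z-a_j| + C_0 \geq u_0 \geq \sum N_j \log|z-a_j| - C_0$ produce weak solutions $u^- \leq u \leq u^+$ of \eqref{KRF} starting from the lower and upper envelopes respectively. It therefore suffices to analyze barriers carrying an isolated logarithmic pole at a single point $a$.

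Near such a pole I build local sub/supersolutions whose singularity strength evolves according to the ODE solved by $k_A(\gamma, \cdot)$. Using the regularized model $v(z,t) = \tfrac12 h(t) \log(|z-a|^2+\epsilon) + (\text{smooth correction})$, an explicit computation of $\log\det v_{\alpha\bar\beta}$ and balancing with the parabolic equation forces an ODE whose unique solution with $h(0) = \gamma$ is $k_A(\gamma, t)$; the corrections are needed to cancel the rank deficiency of $dd^c \log|z-a|^2$ and obtain the exact decay rate. Gluing these local models to a global smooth strictly psh background via a Richberg-type regularization and sending $\epsilon \to 0$ yields global barriers $u^\pm$ with the predicted singular loci: $u^+$ is smooth on $Q$, while $u^-(a_j, t) = -\infty$ for $t < \epsilon_A(N_j)$.

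From the sandwich $u^- \leq u \leq u^+$, the four statements follow. Part (b) is immediate, and in the balanced case $n_j = N_j$ the two-sided Lelong bounds $k_A(n_j, t) \leq \nu_{u(\cdot, t)}(a_j) \leq k_A(N_j, t)$ collapse to equality. For (a) and (c), on any parabolic compact $K \Subset Q$ both barriers are smooth and bounded, so the $u_m$ satisfy uniform two-sided bounds on $K$; interior $C^{2,\alpha}$ a priori estimates for the parabolic complex Monge--Amp\`ere operator (Evans--Krylov plus Schauder bootstrapping) upgrade $u_m \to u$ to $C^\infty_{\mathrm{loc}}(Q)$, so $u$ solves the equation classically there. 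For (d), the $L^1$-convergence $u(\cdot,t) \to u_0$ at $t = 0$ and the boundary matching are inherited from $u^\pm$ via Lebesgue's dominated convergence; for the convergence in capacity, quasi-continuity of bounded psh functions and polarity of isolated points let us choose an open $U_\epsilon$ with $Cap_{\tilde\Omega}(\bar\Omega \setminus U_\epsilon) \leq \epsilon$ on which $u_0$ is continuous, and Hartogs' lemma (upper bound via the psh $u(\cdot,t)$) together with uniform convergence $u^- \to u_0$ on $\bar\Omega \cap U_\epsilon$ yields the required uniform convergence of $u$.

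The main obstacle is constructing the barriers with the \emph{exact} Lelong exponent $k_A$, not merely a leading-order approximation: the smooth correction in the ansatz must be engineered so that the balance in the parabolic equation produces precisely the ODE $\dot h = -2n - Ah$, and then the singular limit $\epsilon \to 0$ must be taken while preserving the (sub/super)-solution inequality via the comparison and stability properties of weak solutions established in the course of Theorem~\ref{main.existence}. Only once this is done does one obtain the sharp two-sided barrier needed for the collapsing Lelong-number estimate in part (b).
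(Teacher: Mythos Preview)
Your barrier strategy is close to what the paper uses for part~(b): Proposition~\ref{singular.prop.weak sec} constructs exactly the regularized supersolutions $v_m(z,t)=g(t)w_m(z)+|z|^2+B(t+1)$ with $g(t)=k_A(\nu,t)$ that you sketch, and the lower bound $\nu_{u(\cdot,t)}(a_j)\ge k_A(N_j,t)$ follows from them. (For the matching upper bound when $n_j=N_j$ the paper argues indirectly, by contradiction with smoothness on $Q$; your proposed two-sided barrier would also work provided the subsolution is built with the exact Lelong exponent.) Your treatment of~(d) likewise parallels the paper's use of Lemma~\ref{utminusu0.lem.weak sec} and quasicontinuity.

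The genuine gap is in (a) and (c). A two-sided $C^0$ sandwich $u^-\le u_m\le u^+$ on a parabolic compact $K\Subset Q$ is not enough to invoke ``Evans--Krylov plus Schauder'': Evans--Krylov produces $C^{2,\alpha}$ from uniform $C^{1,1}$ bounds, not from $C^0$ bounds. The paper devotes all of Section~5 (Theorem~\ref{the.a priori}) to precisely this missing step, establishing uniform control of $|\dot u_m|$ (Lemma~\ref{dotu.lem}), $|\nabla u_m|$ (via Blocki's technique, Lemma~\ref{gradient interior.lem}) and $\Delta u_m$ on compacts of $(\bar\Omega\setminus\{a_1,\dots,a_l\})\times(\epsilon,T)$; only after these are in hand do the $C^{2,\alpha}$ and higher estimates follow. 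Since $Q$ contains $\partial\Omega\times(0,T)$, boundary $C^2$ estimates are also required (Lemmas~\ref{gradient boundary. lem} and~\ref{2 order boundary}), not merely interior ones. Finally, your supersolution $u^+$ built from $k_A(n_j,t)\log|z-a_j|$ is only meaningful while $k_A(n_j,t)>0$, so it does not by itself witness smoothness \emph{at} $a_j$ for $t>\epsilon_A(n_j)$; the paper handles this separately by bounding $(dd^c u_m(\cdot,t))^n$ in $L^p$ through the $\dot u$-estimate and the logarithmic upper bound on $u_0$, applying Kolodziej's $L^\infty$ theorem \cite{Kol98}, and then invoking the bounded-initial-data case from \cite{Do15}. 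Your outline needs either this argument or an appeal to Theorem~\ref{main.zeroLelong} once the Lelong number has dropped to zero.
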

   
 \section{Preliminaries}
 \subsection{Hou-Li theorem}
 \begin{flushleft}
 The Hou-Li theorem states
 that equation \eqref{KRF} has a unique solution when the conditions are good 
 enough. We state the precise problem to be studied:
 \end{flushleft}
 \begin{equation}
 \label{HLKRF}
 \begin{cases}
 \begin{array}{ll}
 \dot{u}=\log\det (u_{\alpha\bar{\beta}})+f(t,z,u)\;\;\;&\mbox{on}\;\Omega\times (0,T),\\
 u=\varphi&\mbox{on}\;\partial\Omega\times [0,T),\\
 u=u_0&\mbox{on}\;\bar{\Omega}\times\{ 0\}.\\
 \end{array}
 \end{cases} 
 \end{equation}
 
 We first need the notion of a subsolution to \eqref{HLKRF}.
 \begin{Def}
 \label{HLsubsol}
 A function $\underline{u}\in C^{\infty}(\bar{\Omega}\times [0,T))$
 is called a \emph{subsolution} of the equation \eqref{HLKRF} if and only if
 \begin{equation}
 \label{subsolu.houli}
 \begin{cases}
 \U(.,t) \mbox{is a strictly plurisubharmonic function,}\\
 \dot{\U}\leq \log\det (\U)_{\alpha\bar{\beta}}+f(t,z,\U),\\
 \U|_{\partial\Omega\times (0,T)}=\varphi|_{\partial\Omega\times (0,T)},\\
 \U(.,0)\leq u_0 .
 \end{cases} 
 \end{equation}
 
 \end{Def}
 \begin{The}\label{houli}
 Let $\Omega\subset\C^n$ be a bounded domain with smooth boundary. Let $T\in (0,\infty]$. Assume
 that
 \begin{itemize}
 \item $\varphi$ is a smooth function in $\bar{\Omega}\times [0,T)$.
 \item $f$ is a smooth function in $[0,T)\times\bar{\Omega}\times\R$ non increasing in the
 lastest variable.
 \item $u_0$ is a smooth strictly plurisubharmonic funtion in a neighborhood of $\Omega$.
 \item $u_0(z)=\varphi (z,0),\;\forall z\in\partial\Omega$.
 \item The compatibility condition is satisfied, i.e.
 $$
 \dot{\varphi}=\log\det (u_0)_{\alpha\bar{\beta}}+f(t,z,u_0),
 \;\;\forall (z,t)\in \partial\Omega\times \{ 0 \}.
 $$
 \item There exists a subsolution to the equation \eqref{HLKRF}.
 \end{itemize} 
 Then there exists a unique solution 
 $u\in C^{\infty}(\Omega\times (0,T))\cap C^{2;1}(\bar{\Omega}\times [0,T))$ of the equation
 \eqref{HLKRF}.
  \end{The}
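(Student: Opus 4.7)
The plan is to prove existence by combining short-time solvability with uniform a priori estimates that extend the solution to the maximal interval, and uniqueness via the parabolic comparison principle. For short-time existence near $t=0$, the compatibility condition at $\partial\Omega\times\{0\}$ and strict plurisubharmonicity of $u_0$ make the linearization $\mathcal{L}v = \dot v - u_0^{\alpha\bar\beta}v_{\alpha\bar\beta} + f_u\, v$ a uniformly parabolic operator with smooth coefficients on $\bar\Omega\times[0,\delta]$; the inverse function theorem in parabolic H\"older spaces then produces a classical solution on $\bar\Omega\times[0,\delta]$. Continuation reduces to uniform a priori $C^{2,\alpha;1,\alpha/2}$ estimates on any compact subinterval of $[0,T)$, after which parabolic Schauder theory bootstraps to $C^\infty(\Omega\times(0,T))\cap C^{2;1}(\bar\Omega\times[0,T))$.

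The a priori estimates follow the customary hierarchy. The $C^0$ lower bound comes from the parabolic comparison principle applied to $u$ and $\U$, using that $f$ is nonincreasing in its last argument; the upper bound comes from a barrier $\varphi+C\rho$, exploiting strict pseudoconvexity of $\Omega$. Boundary gradient estimates follow by squeezing $u$ between $\U$ and such a barrier on $\partial\Omega\times[0,T)$, while interior gradient bounds come from a Bernstein-type test function of the form $|\nabla u|^2 e^{\psi(u)}$. A key intermediate step is an $L^\infty$ bound on $\dot u$, obtained by differentiating the equation in $t$ and applying the maximum principle, with the compatibility condition controlling $\dot u$ on the parabolic boundary at $t=0$. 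Combined with the $C^0$ bound on $u$, this yields uniform two-sided bounds on $\det u_{\alpha\bar\beta}=e^{\dot u - f(t,z,u)}$. The interior $C^2$ bound is then handled by a Yau-type quantity such as $\log\operatorname{tr} u_{\alpha\bar\beta} + h(u)$ with $h$ convex of large derivative.

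The main obstacle, as in the elliptic Monge--Amp\`ere Dirichlet problem, is the boundary $C^2$ estimate. The tangential-tangential component is directly controlled since $u-\varphi\equiv 0$ on $\partial\Omega$. The mixed tangential-normal component is handled by local barriers adapted to tangential vector fields along $\partial\Omega$, and it is here that the subsolution $\U$ plays the decisive role: one exploits the strict gap $\log\det\U_{\alpha\bar\beta} + f(t,z,\U) - \dot{\U}\geq 0$ (strict away from the corner) to build barriers dominating mixed second derivatives of $u-\U$ near $\partial\Omega$, in the parabolic analogue of Guan's scheme. The double normal derivative $u_{\nu\bar\nu}$ at $\partial\Omega$ is then extracted algebraically from the equation itself, since the positive lower bound on $\det u_{\alpha\bar\beta}$ together with control on all other entries of $u_{\alpha\bar\beta}$ forces $u_{\nu\bar\nu}$ to be bounded. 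With the full $C^2$ bound in hand, concavity of $\log\det$ on Hermitian positive definite matrices triggers the parabolic Evans--Krylov theorem, upgrading to $C^{2,\alpha;1,\alpha/2}$ and closing the continuation argument.

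Uniqueness is a direct consequence of the parabolic comparison principle: if $u_1,u_2$ are two solutions, then at a point where $u_1-u_2$ achieves its maximum one uses concavity of $\log\det$ together with monotonicity of $f$ in $u$ to produce a differential inequality forcing that maximum to be attained on the parabolic boundary, where $u_1=u_2$ by hypothesis. The hardest step of the entire argument is the boundary $C^2$ estimate, and more specifically the mixed tangential-normal estimate, since it is the only place where the existence of a subsolution is truly indispensable rather than merely convenient.
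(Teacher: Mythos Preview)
The paper does not prove this theorem: it is stated in the Preliminaries as the Hou--Li theorem and attributed to \cite{HL10}, with no argument given. So there is no ``paper's proof'' to compare against; your outline is essentially a reconstruction of the standard strategy that \cite{HL10} (and, in the elliptic case, \cite{CKNS85}, \cite{Gua98}) follow: short-time existence via linearization and the implicit function theorem, a hierarchy of a priori estimates ($C^0$, $\dot u$, boundary and interior $C^2$), Evans--Krylov, Schauder bootstrapping, and uniqueness by comparison. As a sketch of that method your proposal is sound.

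One correction is needed, however. You invoke ``strict pseudoconvexity of $\Omega$'' to build the barrier $\varphi+C\rho$ for the $C^0$ upper bound and the boundary gradient estimate, but Theorem~\ref{houli} assumes only that $\Omega$ is a bounded domain with smooth boundary; strict pseudoconvexity is \emph{not} hypothesized (the Remark following the theorem explains that strict pseudoconvexity merely guarantees a subsolution, which is here assumed directly). The right replacement is the spatial harmonic extension $h$ of $\varphi$: since $u(\cdot,t)$ is plurisubharmonic and $u=\varphi=h$ on $\partial\Omega$, one has $u\le h$, giving both the $C^0$ upper bound and, together with $\underline u\le u$ and $\underline u=\varphi$ on $\partial\Omega\times(0,T)$, the two-sided control of the normal derivative at the boundary. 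With that adjustment the outline is correct; as you note, the genuinely delicate step is the boundary tangential--normal second derivative, which is exactly where the subsolution hypothesis is essential via Guan's barrier construction.
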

 \begin{Rem}
 \begin{itemize}
 \item[(i)]There is a corresponding result in the case of a compact K\"ahler manifold \cite{Cao85}.
  On the compact  K\"ahler manifold $X$, we
 must assume that $0<T<T_{max}$, where $T_{max}$ depends on $X$. In the case of domain 
 $\Omega\subset\C^n$, we can assume that $T=+\infty$ if $\varphi, \U$ are defined on 
 $\bar{\Omega}\times [0,+\infty)$ and $f$ is defined on $[0,+\infty)\times\bar{\Omega}\times\R$.
 \item[(ii)] If $\Omega$ is a bounded smooth strictly pseudoconvex domain of $\C^n$ then 
 one can prove that a subsolution always exists on $\bar{\Omega}\times [0,T')$, for any 
 $0<T'<T$, and so Theorem \ref{houli} does not need the additional assumption of existence of a subsolution.
 \end{itemize}
 
 \end{Rem}
 \subsection{Maximum principle}
 \begin{flushleft}
 The following maximum principle is a basic tool to
 establish upper and lower bounds in the sequel (see \cite{BG13} and \cite{IS13} for the proof).
 \end{flushleft}
 \begin{The}\label{max prin}
 Let $\Omega$ be a bounded domain of $C^n$ and $T>0$. 
 Let $\{\omega_t\}_{0<t<T}$ be a continuous family of continuous 
   positive definite Hermitian forms on $\Omega$.  Denote by $\Delta_t$ the Laplacian with 
  respect to $\omega_t$:
  $$
  \Delta_t f=\dfrac{n\omega_t^{n-1}\wedge dd^cf}{\omega_t^n},\;\forall f\in C^{\infty}(\Omega).
  $$
    Suppose that
 $H \in C^{\infty}(\Omega \times (0,T)) \cap C(\bar{\Omega}\times [0,T))$ and satisfies\\
 \begin{center}
 $(\frac{\partial}{\partial t}-\Delta_t)H \leq 0 \:$ 
 or
 $\: \dot{H}_t \leq \log\dfrac{(\omega_t+dd^c H_t)^n}{\omega_t^n}$.
 \end{center}
 Then $\sup\limits_{\bar{\Omega}\times [0,T)} H = \sup\limits_{\partial_P(\Omega \times
 [0,T))}H$. Here we denote $\partial_P(\Omega\times (0,T))=\left(\partial\Omega\times (0,T) \right)
 \cup \left( \bar{\Omega}\times\{ 0\}\right)$.
 \end{The}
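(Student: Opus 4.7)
The weak solution $u=\lim_{m\to\infty}u_m$ provided by Theorem~\ref{main.existence} is the decreasing limit of smooth classical solutions $u_m$ of~\eqref{KRF} arising from smooth strictly psh approximants $u_0^{(m)}\searrow u_0$. To make the asymptotics at each pole $a_j$ tractable, I choose the approximants explicitly: near each $a_j$, one takes $u_0^{(m)}(z)=\frac{n_j}{2}\log(|z-a_j|^2+\epsilon_m^2)+\psi(z)$ with $\epsilon_m\searrow 0$ and $\psi$ smooth, arranged so that the two-sided bound in the hypothesis is preserved uniformly in $m$ and the boundary compatibility with $\varphi$ holds.

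The crux of the proof is the construction of matching super- and sub-barriers on a small space-time cylinder $B(a_j,r)\times[0,T)$ around each pole. I use an ansatz
\[
 \Phi(z,t)=\alpha(t)\log\bigl(|z-a_j|^2+b(t)\bigr)+h(z,t),
\]
based on the explicit Hessian identity $\det\partial\bar\partial\log(|z-a_j|^2+b)=b(|z-a_j|^2+b)^{-n-1}$. Here $\alpha(t)$ is chosen from a linear ODE so that $2\alpha(t)$ tracks the announced Lelong-number profile $k_A(\cdot,t)$; the positive function $b(t)$ is determined from a first-order ODE obtained by plugging $\Phi$ into the sub/super-solution inequality for the equation $\dot u=\log\det u_{\alpha\bar\beta}-Au+f$ and matching leading singular terms; and $h$ is a smooth correction absorbing lower-order terms and matching lateral boundary values with $u_m$. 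Initial data $\alpha(0)=N_j/2$, $b(0)=\epsilon_m^2$ with the appropriate sign convention yield a super-barrier $V_m^{(j)}$; taking $b(0)>0$ fixed yields a sub-barrier $W$. Theorem~\ref{max prin} applied to $u_m-V_m^{(j)}$ and $W-u_m$ on the cylinder gives the sandwich $W\leq u_m\leq V_m^{(j)}$ near each $a_j$.

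From the sandwich, items (a)--(c) follow by standard parabolic theory. On any compact $K\Subset Q$, the barriers give uniform (in $m$) two-sided bounds on $u_m$; plugging these into the interior $C^{k,\alpha}$ a~priori estimates used in the proof of Theorem~\ref{main.zeroLelong} yields uniform smoothness of $u_m$ on $K$, so $u\in C^\infty(Q)$ (part (a)) and $u$ satisfies the PDE classically in $Q$ (part (c)). For (b), evaluating the super-barrier at the pole gives $u_m(a_j,t)\leq\alpha(t)\log b_m^{(j)}(t)+O(1)\to-\infty$ as $m\to\infty$, valid whenever $\alpha(t)>0$, i.e., for $t<\epsilon_A(N_j)$; hence $u(a_j,t)=-\infty$ there. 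When $n_j=N_j$, the super- and sub-barriers both carry Lelong number $k_A(N_j,t)$ at $a_j$, forcing $\nu_{u(\cdot,t)}(a_j)=k_A(N_j,t)$ in the announced range.

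Part~(d) is essentially soft. $L^1$ convergence $u(\cdot,t)\to u_0$ as $t\searrow 0$ follows from Lebesgue/monotone convergence applied to $u_m(\cdot,t)$ (continuous at $t=0$ by Theorem~\ref{houli}) together with $u_0^{(m)}\searrow u_0$; the boundary equality $u|_{\partial\Omega\times[0,T)}=\varphi$ is inherited pointwise from each $u_m$; uniform-in-capacity convergence then follows from the monotonicity of the approximants via a standard Bedford--Taylor/Xing type capacity estimate for decreasing families of psh functions. \emph{The main obstacle} is the barrier construction: one must verify that the ansatz $\alpha(t)\log(|z-a_j|^2+b(t))+h(z,t)$ genuinely satisfies the sub/super-solution inequality with the exact lifetime $\epsilon_A(N_j)$. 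This requires a careful choice of the ODE governing $b(t)$, a tuning of the correction $h$ to absorb lower-order Monge--Amp\`ere contributions and to match lateral-boundary values of $u_m$, and a gluing procedure to combine the local barriers at distinct poles into a global object--feasible because everything is smooth and uniformly bounded away from the poles.
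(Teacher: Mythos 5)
Your proposal does not prove the statement in question. The statement is Theorem \ref{max prin}, the parabolic maximum principle: for $H$ satisfying $(\partial_t-\Delta_t)H\leq 0$ or $\dot H_t\leq\log\frac{(\omega_t+dd^cH_t)^n}{\omega_t^n}$, the supremum of $H$ over $\bar\Omega\times[0,T)$ is attained on the parabolic boundary. What you have written is instead a proof sketch of Theorem \ref{main.description} (smoothness of the weak solution away from the poles, the Lelong-number profile $k_A(N_j,t)$, convergence in capacity, etc.); indeed you invoke Theorem \ref{max prin} as a tool inside your own argument, which makes the circularity plain. Nothing in your text addresses the function $H$, the operators $\Delta_t$, or the parabolic boundary $\partial_P(\Omega\times(0,T))$.

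For the record, the paper does not prove this theorem either; it cites \cite{BG13} and \cite{IS13}. The standard argument is short: replace $H$ by $H_\epsilon=H-\epsilon t$ and suppose $H_\epsilon$ attains its maximum over $\bar\Omega\times[0,T']$ (with $T'<T$) at an interior point $(z_0,t_0)\in\Omega\times(0,T']$. At such a point $\dot H(z_0,t_0)\geq\epsilon>0$ and $dd^cH(z_0,t_0)\leq 0$, so $\Delta_{t_0}H(z_0,t_0)\leq 0$ and $(\omega_{t_0}+dd^cH_{t_0})^n\leq\omega_{t_0}^n$ at $z_0$; either differential inequality then forces $\dot H(z_0,t_0)\leq 0$, a contradiction. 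Hence the maximum of $H_\epsilon$ lies on $\partial_P$, and letting $\epsilon\to 0$ and $T'\to T$ gives the claim. You would need to supply an argument of this kind; the barrier construction you describe, whatever its merits for Theorem \ref{main.description}, is not relevant here.
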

 \begin{Cor}(Comparison principle)\label{compa log} 
 Let $\Omega$ be a bounded domain of $\C^n$ and $A\geq 0, T>0$. Let
  $u,v\in C^{\infty}(\Omega\times (0,T))\cap C(\bar{\Omega}\times [0,T))$ 
 satisfy:
 \begin{itemize}
 \item $u(.,t)$ and $v(.,t)$ are strictly plurisubharmonic functions for any $t\in [0,T)$,
 \item $\dot{u}\leq \log\det (u_{\alpha\bar{\beta}})-Au +f(z,t),$
 \item $\dot{v}\geq \log\det (v_{\alpha\bar{\beta}})-Av+f(z,t),$
 \end{itemize}
 where $f\in C^{\infty}(\bar{\Omega}\times [0,T))$.\\
 Then $\sup\limits_{\Omega\times (0,T)}(u-v)\leq max\{ 0, 
 \sup\limits_{\partial_P(\Omega\times (0,T))}(u-v)\}$.
 \end{Cor}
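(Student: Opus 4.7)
The plan is to reduce the statement to a direct application of Theorem \ref{max prin}, using the concavity of $\log\det$ on positive-definite Hermitian matrices to linearize the Monge--Amp\`ere nonlinearity. Since $u(\cdot,t)$ and $v(\cdot,t)$ are strictly plurisubharmonic, both $(u_{\alpha\bar{\beta}})$ and $(v_{\alpha\bar{\beta}})$ are positive definite on $\Omega\times(0,T)$. Concavity of $\log\det$ then gives, pointwise,
\[
\log\det u_{\alpha\bar{\beta}}-\log\det v_{\alpha\bar{\beta}}\leq v^{\bar{\beta}\alpha}(u-v)_{\alpha\bar{\beta}}=\Delta_t(u-v),
\]
where $\omega_t:=dd^c v(\cdot,t)$ is a smooth family of positive-definite $(1,1)$-forms on $\Omega$ and $\Delta_t$ is the associated Laplacian in the sense of Theorem \ref{max prin}. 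Subtracting the two differential inequalities satisfied by $u$ and $v$, the terms $f(z,t)$ cancel and one obtains
\[
(\partial_t-\Delta_t)(u-v)\leq -A(u-v)\quad\text{on }\Omega\times(0,T).
\]

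Next I would set $M_0:=\max\{0,\sup_{\partial_P(\Omega\times[0,T))}(u-v)\}$ (nothing to prove when $M_0=+\infty$) and introduce the auxiliary function
\[
H:=e^{At}\bigl(u-v-M_0\bigr)\in C^{\infty}(\Omega\times(0,T))\cap C(\bar{\Omega}\times[0,T)).
\]
Since $M_0$ and $e^{At}$ are constant in $z$, a direct calculation using the previous display gives
\[
(\partial_t-\Delta_t)H=e^{At}\bigl[A(u-v-M_0)+(\partial_t-\Delta_t)(u-v)\bigr]\leq -AM_0\,e^{At}\leq 0,
\]
because $A\geq 0$ and $M_0\geq 0$. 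Hence $H$ meets the hypothesis of Theorem \ref{max prin} with respect to the family $\omega_t$, and that theorem yields $\sup_{\bar{\Omega}\times[0,T)}H=\sup_{\partial_P}H$. By the very choice of $M_0$ we have $u-v-M_0\leq 0$ on $\partial_P$, hence $H\leq 0$ on $\partial_P$ and therefore everywhere. Dividing by $e^{At}>0$ gives $u-v\leq M_0$ on $\Omega\times(0,T)$, which is the asserted inequality.

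The only real obstacle is the linearization step: one has to replace the nonlinear expression $\log\det u_{\alpha\bar{\beta}}-\log\det v_{\alpha\bar{\beta}}$ by a quantity to which the linear parabolic maximum principle of Theorem \ref{max prin} can be applied, and this is exactly what concavity of $\log\det$ provides. The exponential factor $e^{At}$ plays a purely cosmetic role, absorbing the dissipative term $-A(u-v)$; when $A=0$ one could take $H=u-v-M_0$ directly. The subtraction of $M_0$ is needed because Theorem \ref{max prin} equates two suprema rather than identifying an interior maximum point, so one must first normalize $H$ to be non-positive on the parabolic boundary.
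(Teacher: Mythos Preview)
Your proof is correct. The paper states this corollary without proof, treating it as an immediate consequence of Theorem~\ref{max prin}, so there is no detailed argument to compare against; your linearization via the concavity of $\log\det$ together with the exponential weight $e^{At}$ to absorb the zeroth-order term is a standard and clean way to make the deduction precise.

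One small remark on the relation to the paper's formulation: Theorem~\ref{max prin} is stated with two alternative hypotheses, the linear one $(\partial_t-\Delta_t)H\le 0$ and the nonlinear one $\dot{H}_t\le \log\bigl((\omega_t+dd^cH_t)^n/\omega_t^n\bigr)$. With $H=u-v$ and $\omega_t=dd^cv(\cdot,t)$ the nonlinear hypothesis is satisfied \emph{verbatim} when $A=0$, since then $\dot u-\dot v\le \log\det u_{\alpha\bar\beta}-\log\det v_{\alpha\bar\beta}=\log\bigl((\omega_t+dd^cH)^n/\omega_t^n\bigr)$; this is presumably the one-line argument the paper has in mind and explains why the corollary carries the label ``$\log$''. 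Your concavity step is exactly what reduces the nonlinear hypothesis to the linear one, and is needed in any case to handle the extra term $-A(u-v)$ when $A>0$. So your route and the route suggested by the paper's statement of Theorem~\ref{max prin} differ only cosmetically: you linearize first and then apply the linear maximum principle, whereas one could equally invoke the nonlinear version directly for $A=0$ and introduce the weight $e^{At}$ (or an equivalent device) only for $A>0$.
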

 \begin{Cor}\label{compa lap}
  Let $\Omega$ be a bounded domain of $\C^n$ and $T>0$. We denote
 by  $L$ the operator on $C^{\infty}(\Omega\times (0,T))$ given by
 $$
 L(f)=\dfrac{\partial f}{\partial t}-\sum a_{\alpha\bar{\beta}}
 \dfrac{\partial^2f}{\partial z_{\alpha}\partial \bar{z}_{\beta}}-b.f,
 $$
 where $a_{\alpha\bar{\beta}}, b\in C(\Omega\times (0,T))$, $(a_{\alpha\bar{\beta}}(z,t))$
 are positive definite Hermitian matrices and $b(z,t)<0$.\\
 Assume that  $\phi\in C^{\infty}(\Omega\times (0,T))\cap C(\bar{\Omega}\times [0,T))$ 
 satisfies\\
 $$L(\phi)\leq  0.$$
 Then $\phi\leq max(0,\sup\limits_{\partial_P(\Omega\times (0,T))}\phi).$
 \end{Cor}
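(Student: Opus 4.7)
The plan is to run the standard parabolic maximum principle, using the sign assumption $b<0$ to handle the zeroth-order term. The essential observation is that at an interior maximum $(z_0,t_0)$ of a smooth function $\phi$ one has $\phi_t(z_0,t_0)\ge 0$ and the complex Hessian $(\phi_{\alpha\bar\beta}(z_0,t_0))$ is negative semi-definite. Since $(a_{\alpha\bar\beta})$ is positive definite Hermitian, this forces $\sum a_{\alpha\bar\beta}\phi_{\alpha\bar\beta}(z_0,t_0)\le 0$, and hence
\[
L(\phi)(z_0,t_0)\ge -b(z_0,t_0)\,\phi(z_0,t_0).
\]
If moreover $\phi(z_0,t_0)>0$, the hypothesis $b<0$ makes the right-hand side strictly positive, contradicting $L(\phi)\le 0$.

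To make this precise, I first fix $T'\in(0,T)$ so that $\phi$ is continuous on the compact cylinder $\bar\Omega\times[0,T']$ and its supremum is attained. Then I perturb: for $\varepsilon>0$, set $\phi_\varepsilon:=\phi-\varepsilon t$. A short computation using $b<0$ gives
\[
L(\phi_\varepsilon)=L(\phi)-\varepsilon+\varepsilon b t\le -\varepsilon+\varepsilon b t <0
\]
throughout $\Omega\times(0,T')$, i.e.\ strict parabolic subsolution. Let $(z_0,t_0)\in\bar\Omega\times[0,T']$ be a point where $\phi_\varepsilon$ attains its maximum. If $z_0\in\partial\Omega$ or $t_0=0$, then $(z_0,t_0)\in\partial_P(\Omega\times(0,T'))$ and we are done for that $T'$.

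Otherwise $z_0\in\Omega$ and $t_0\in(0,T']$. Smoothness of $\phi$ on $\Omega\times(0,T)$ gives the two necessary interior conditions: $(\phi_\varepsilon)_t(z_0,t_0)\ge 0$ (with equality when $t_0<T'$, and as a one-sided derivative when $t_0=T'$), and negative semi-definiteness of the complex Hessian. By the computation above, $L(\phi_\varepsilon)(z_0,t_0)\ge -b(z_0,t_0)\,\phi_\varepsilon(z_0,t_0)$; comparing with $L(\phi_\varepsilon)<0$ and using $-b>0$, this forces $\phi_\varepsilon(z_0,t_0)\le 0$. Combining the two alternatives,
\[
\sup_{\bar\Omega\times[0,T']}\phi_\varepsilon\le\max\!\Bigl(0,\;\sup_{\partial_P(\Omega\times(0,T'))}\phi_\varepsilon\Bigr)\le\max\!\Bigl(0,\;\sup_{\partial_P(\Omega\times(0,T))}\phi\Bigr).
\]
Letting $\varepsilon\searrow 0$ and then $T'\nearrow T$ yields the conclusion.

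The argument is routine; the only subtle point is the interior maximum step when $t_0=T'$, where one must use the one-sided time derivative rather than a vanishing derivative. This is standard and causes no trouble since $\phi_t(z_0,T')\ge 0$ is all that is needed.
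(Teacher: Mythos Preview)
Your argument is correct and is the standard $\varepsilon$-perturbation proof of the parabolic maximum principle with a zeroth-order term of the right sign. The paper does not actually give a proof of this corollary; it is simply recorded as a consequence of the maximum principle (Theorem~\ref{max prin}), with the details left to the references \cite{BG13, IS13}. Your proof supplies precisely those details, so there is nothing to compare.
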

 \subsection{The Laplacian inequalities}
 \begin{flushleft}
 We shall need two standard auxiliary results (see \cite{Yau78}, \cite{Siu87} for a proof).
 \end{flushleft}
 
 \begin{The}
 \label{lap 1}
 Let $\omega_1,\omega_2$ be positive $(1,1)$-forms on a complex manifold $X$.Then\\
 $$
 n\left(\dfrac{\omega_1^n}{\omega_2^n}\right)^{1/n}\leq tr_{\omega_2}(\omega_1)
 \leq n\left(\dfrac{\omega_1^n}{\omega_2^n}\right)(tr_{\omega_1}(\omega_2))^{n-1},
 $$
 where $tr_{\omega_1}(\omega_2)=\dfrac{n\omega_1^{n-1}\wedge\omega_2}{\omega_1^n}$.
 \end{The}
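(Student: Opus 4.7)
The plan is to reduce both inequalities to classical inequalities for positive real numbers by simultaneously diagonalizing the two Hermitian forms at an arbitrary point. Since every quantity appearing in the statement is defined pointwise, I would fix $p \in X$ and choose a basis of $T_p^{1,0}X$ — this exists by the classical linear algebra of two Hermitian forms, one of which is positive definite — so that at $p$,
$$\omega_2 = i\sum_{j=1}^n dz_j \wedge d\bar z_j,\qquad \omega_1 = i\sum_{j=1}^n \lambda_j\, dz_j \wedge d\bar z_j,\qquad \lambda_j>0.$$
In this basis one immediately reads off $\omega_1^n/\omega_2^n = \prod_j \lambda_j$, $tr_{\omega_2}(\omega_1) = \sum_j \lambda_j$, and $tr_{\omega_1}(\omega_2) = \sum_j 1/\lambda_j$, so both inequalities become algebraic statements about the positive numbers $\lambda_1,\ldots,\lambda_n$.

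The lower bound $n(\prod \lambda_j)^{1/n} \leq \sum \lambda_j$ is then precisely the arithmetic-geometric mean inequality. For the upper bound, I would divide through by $\prod \lambda_j$ and set $\mu_j = 1/\lambda_j$, turning the desired estimate into $e_{n-1}(\mu) \leq n\, e_1(\mu)^{n-1}$, where $e_k$ denotes the $k$-th elementary symmetric polynomial. This follows from the multinomial expansion: for each index $j$, the product $\prod_{k \neq j}\mu_k$ appears with coefficient $(n-1)!$ in $(\sum_k \mu_k)^{n-1}$, so $e_1^{n-1} \geq (n-1)!\, \prod_{k\neq j}\mu_k$ for every $j$; summing these $n$ inequalities yields $n\, e_1^{n-1} \geq (n-1)!\, e_{n-1}$, which is stronger than what is required.

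There is no genuine obstacle here: the argument is pointwise linear algebra plus two elementary inequalities for positive reals. The only step that merits any care is the initial simultaneous diagonalization of $\omega_1$ and $\omega_2$; but since $\omega_2$ is positive definite it defines a Hermitian inner product on $T_p^{1,0}X$, with respect to which the self-adjoint endomorphism corresponding to $\omega_1$ can be unitarily diagonalized — a completely classical fact.
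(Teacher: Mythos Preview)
Your argument is correct. The simultaneous diagonalization at a point reduces everything to the stated identities for the $\lambda_j$, the lower bound is exactly AM--GM, and your multinomial-expansion argument for the upper bound is valid (in fact it gives the sharper $e_{n-1}(\mu)\leq e_1(\mu)^{n-1}/(n-1)!$, which more than suffices).

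As for comparison: the paper does not actually prove this theorem. It is listed among ``standard auxiliary results'' with a reference to \cite{Yau78} and \cite{Siu87} for a proof, and no argument is given in the text. What you have written is precisely the standard pointwise linear-algebra proof one finds by unwinding those references, so there is nothing to contrast --- you have simply supplied what the paper omits.
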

 \begin{Rem}
  Applying Theorem \ref{lap 1} for  $\omega_1=dd^cu$ and $\omega_2=dd^c|z|^2$, we have\\
  $$
  n(\det (u_{\alpha\bar{\beta}}))^{1/n}\leq \Delta u\leq
  n(\det (u_{\alpha\bar{\beta}}))(\sum u^{\alpha\bar{\alpha}})^{n-1} .
  $$
  \end{Rem}
 \begin{The}\label{lap 2}
 Let $\omega, \; \omega '$ be two K\"ahler forms on a complex manifold $X$. If the holomorphic bisectional curvature of $\omega$ is bounded below by a constant $B\in\R$ on $X$,then\\
 $$
 \Delta_{\omega '}\log tr_{\omega}(\omega ')\geq -\frac{tr_{\omega}Ric(\omega ')}{tr_{\omega}(\omega ')}+B\, tr_{\omega '} (\omega),
 $$
 where $Ric(\omega')$ is the form associated  to the Ricci curvature of $\omega'$.
 \end{The}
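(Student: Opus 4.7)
The plan is to verify the inequality pointwise by a direct tensor calculation at an arbitrary point $p\in X$. First I would choose holomorphic coordinates centered at $p$ in which $\omega$ has the normal form $g_{i\bar{j}}(p)=\delta_{ij}$ with $\partial_k g_{i\bar{j}}(p)=0$, and in which $\omega'$ is simultaneously diagonalized at $p$, so $g'_{i\bar{j}}(p)=\lambda_i\delta_{ij}$ with $\lambda_i>0$. In such coordinates $tr_\omega(\omega')(p)=\sum_i\lambda_i$, $tr_{\omega'}(\omega)(p)=\sum_i\lambda_i^{-1}$, and the curvature tensor of $\omega$ at $p$ satisfies $R_{i\bar{j}k\bar{l}}(p)=-\partial_k\partial_{\bar{l}}g_{i\bar{j}}(p)$, so the bisectional curvature hypothesis reads $R_{i\bar{i}k\bar{k}}(p)\geq B$ for all $i,k$.

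I would then decompose
$$\Delta_{\omega'}\log tr_\omega(\omega')=\frac{\Delta_{\omega'}tr_\omega(\omega')}{tr_\omega(\omega')}-\frac{|\partial\, tr_\omega(\omega')|^2_{\omega'}}{(tr_\omega(\omega'))^2}$$
and compute $\Delta_{\omega'}tr_\omega(\omega')$ at $p$ by differentiating $tr_\omega(\omega')=g^{i\bar{j}}g'_{i\bar{j}}$ twice. Because $\partial g$ vanishes at $p$, the expansion produces three kinds of terms: (i) a curvature-of-$\omega$ piece $\sum_{i,k}\lambda_i R_{i\bar{i}k\bar{k}}/\lambda_k$, which by the hypothesis is $\geq B\cdot tr_\omega(\omega')\cdot tr_{\omega'}(\omega)$; (ii) a Ricci-of-$\omega'$ piece which, via the identity $R'_{k\bar{k}}=-\partial_k\partial_{\bar{k}}\log\det g'+(\text{quadratic in }\partial g')$, rearranges into $-tr_\omega Ric(\omega')$ up to a quadratic error; (iii) the positive quadratic error itself, a sum of terms of the shape $|\partial g'|^2/\lambda$.

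The decisive step, and the main obstacle, is showing that this positive quadratic error in (iii) dominates the gradient correction $|\partial\, tr_\omega(\omega')|^2_{\omega'}/(tr_\omega(\omega'))^2$. Here the K\"ahler condition for $\omega'$ enters crucially: closedness of $\omega'$ gives the symmetry $\partial_k g'_{i\bar{j}}=\partial_i g'_{k\bar{j}}$, and a Cauchy--Schwarz estimate of the shape
$$\frac{|\partial_k tr_\omega(\omega')|^2}{tr_\omega(\omega')}=\frac{\bigl|\sum_i\partial_k g'_{i\bar{i}}\bigr|^2}{\sum_i\lambda_i}\leq \sum_i\frac{|\partial_k g'_{i\bar{i}}|^2}{\lambda_i},$$
summed against $1/\lambda_k$, shows that the gradient term is absorbed into (iii). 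What remains after the cancellation is exactly $B\cdot tr_{\omega'}(\omega)-tr_\omega Ric(\omega')/tr_\omega(\omega')$, giving the claimed inequality. The Cauchy--Schwarz absorption is the analytic heart of the argument; the rest is routine index manipulation in the chosen normal coordinates, and patching over $X$ is automatic since the statement is pointwise.
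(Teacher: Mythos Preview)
Your sketch is correct and is precisely the classical Chern--Lu/Yau computation; the paper does not prove this result but simply cites \cite{Yau78} and \cite{Siu87}, where exactly this normal-coordinate calculation with the Cauchy--Schwarz absorption step is carried out. So your approach coincides with the one the paper defers to.
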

 \begin{Rem}
 Applying Theorem \ref{lap 2} for $\omega=dd^c|z|^2$ and  $\omega'=dd^cu$, we have\\
 $$
 \sum u^{\alpha\bar{\beta}}(\log \Delta u)_{\alpha\bar{\beta}}\geq\dfrac{\Delta\log\det (u_{\alpha\bar{\beta}})}{\Delta u}.
 $$
 \end{Rem}
 \section{Some properties of weak solutions}
 In this section, we assume that $\Omega$ is a  bounded smooth strictly pseudoconvex domain of $\C^n$,
 $A\geq 0$, $T>0$. 
 %We consider equation 
 %\begin{equation}\label{KRF.sec weak}
 % \begin{cases}
  %\begin{array}{ll}
 % \dot{u}=\log\det (u_{\alpha\bar{\beta}})-Au+f(z,t)\;\;\;&\mbox{on}\;\Omega\times (0,T),\\
%  u=\varphi&\mbox{on}\;\partial\Omega\times [0,T),\\
%  u=u_0&\mbox{on}\;\bar{\Omega}\times\{ 0\},\\
%  \end{array}
%  \end{cases}
%  \end{equation}
%  where $\varphi, f$ are smooth functions in $\bar{\Omega}\times [0,T]$ 
%       and $u_0$ is a plurisubharmonic function in a neighbourhood of $\bar{\Omega}$
%      such that $u_0(z)=\varphi(z,0)$ for any $z\in\partial\Omega$.
   
   We will study some properties of the weak solutions of \eqref{KRF}. The proof of Theorem \ref{main.existence} and the proof of Theorem \ref{main.nonsmooth} are contained in this section.
   Theorem \ref{main.existence} is the union of Proposition \ref{exist unique.prop.sec weak} and 
   Corollary \ref{unique.cor.sec weak}. Theorem \ref{main.nonsmooth} is a corollary of Proposition
   \ref{L1weak.prop.secweak} and Proposition \ref{singular.prop.weak sec}.
 \begin{Lem}\label{weaker condition.lem.sec weak}
 Assume that there exists $u_m\in C^{\infty}(\bar{\Omega}\times [0,T))$  satisfying
 \begin{equation}\label{KRF_weak.sec weak}
    \begin{cases}
    \begin{array}{ll}
    u_m(.,t)\in SPSH(\Omega)&\forall t\in [0,T),\\
     u_m(z,t)+2^{-m}\geq u_{m+1}(z,t)&\forall (z,t)\in \bar{\Omega}\times [0,T),\\
    \dot{u}_m=\log\det (u_m)_{\alpha\bar{\beta}}-Au_m+f(z,t)\;\;&\forall (z,t)\in\Omega\times (0,T),\\
    u_m(z,t)\longrightarrow\varphi(z,t)&\forall (z,t)\in\partial\Omega\times [0,T),\\
    u_m(z,0)\longrightarrow u_0(z)&\forall z\in\bar{\Omega}.
    \end{array}
    \end{cases}
    \end{equation}
    Then $u=\lim u_m$ is a weak solution of \eqref{KRF}.
 \end{Lem}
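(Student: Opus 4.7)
The plan is to construct, from the given $u_m$, an \emph{honestly decreasing} smooth approximating sequence that still solves the parabolic equation and has the same pointwise limit $u$. A natural candidate is
\[
v_m(z,t) := u_m(z,t) + \beta_m e^{-At},
\]
where $\beta_m\searrow 0$ is a sequence of positive constants to be chosen. Because the correction depends only on $t$, we automatically have $(v_m)_{\alpha\bar\beta}=(u_m)_{\alpha\bar\beta}$, so smoothness on $\bar\Omega\times[0,T)$ and strict plurisubharmonicity of each $v_m(\cdot,t)$ are inherited from $u_m$. Moreover $c(t):=e^{-At}$ satisfies $\dot c+Ac=0$, which is exactly what is needed for the equation $\dot v_m=\log\det(v_m)_{\alpha\bar\beta}-Av_m+f(z,t)$ to follow from the analogous equation for $u_m$.

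The delicate step is to upgrade the weak bound $u_{m+1}\leq u_m+2^{-m}$ to genuine monotonicity $v_m\geq v_{m+1}$. Writing
\[
v_m-v_{m+1}=(u_m-u_{m+1})+(\beta_m-\beta_{m+1})e^{-At}\geq -2^{-m}+(\beta_m-\beta_{m+1})e^{-AT},
\]
it suffices to choose $\beta_m:=2^{-m+1}e^{AT}$, which yields $\beta_m-\beta_{m+1}=2^{-m}e^{AT}$ and $\beta_m\searrow 0$ (if $T=+\infty$ one would instead apply the construction on each finite sub-interval and invoke uniqueness; I would treat only the finite case here).

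With this choice, combining the hypothesised pointwise convergences on $\partial\Omega\times[0,T)$ and $\bar\Omega\times\{0\}$ with the monotonicity of $v_m$ and the uniform decay of $\beta_m e^{-At}$ to zero, one obtains $v_m\searrow\varphi$ on $\partial\Omega\times[0,T)$, $v_m\searrow u_0$ on $\bar\Omega\times\{0\}$, and $\lim v_m=\lim u_m=u$ on $\bar\Omega\times[0,T)$. Hence $\{v_m\}$ realizes $u$ as a weak solution of \eqref{KRF} in the sense of \eqref{KRF_weak}.

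The only substantive obstacle is guessing the correct form of the correction: it must both preserve the Monge--Amp\`ere PDE and close the $2^{-m}$ monotonicity gap. The exponential factor $e^{-At}$ is forced by the linear term $-Au$ in the equation, and the prefactor $e^{AT}$ in $\beta_m$ is what is needed to dominate the worst-case value of $e^{-At}$ on $[0,T)$.
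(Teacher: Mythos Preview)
Your proposal is correct and essentially identical to the paper's proof: with $\beta_m=2^{-m+1}e^{AT}$ your $v_m=u_m+\beta_m e^{-At}$ is literally the paper's $v_m=u_m+2^{-m+1}e^{-At+AT}$, and the verifications of monotonicity, the PDE, and the boundary/initial limits proceed the same way. The only cosmetic difference is that the paper checks $v_m\geq v_{m+1}$ by splitting $v_m-v_{m+1}=(u_m+2^{-m}-u_{m+1})+2^{-m}(e^{A(T-t)}-1)$ into two nonnegative pieces, whereas you bound $e^{-At}\geq e^{-AT}$; both are immediate.
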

 \begin{proof}
 Set $v_m=u_m+2^{-m+1}e^{-At+AT}$. We have
 $$
 v_m-v_{m+1}=(u_m+2^{-m}-u_{m+1})+2^{-m}(e^{-At+AT}-1)\geq 0.
 $$
 Thus the sequence $\{v_m\}$ is decreasing, and
 \begin{center}
 $v_m\searrow\varphi$ on $\partial\Omega\times [0,T)$,\\
 $v_m\searrow u_0$ on $\bar{\Omega}\times \{0\}$.
 \end{center}
 Moreover, it follows from \eqref{KRF_weak.sec weak} that
 \begin{flushleft}
 $\begin{array}{ll}
 \dot{v}_m&=\dot{u}_m-A.2^{-m+1}e^{-At+AT}\\
 &=\log\det (u_m)_{\alpha\bar{\beta}}-Au_m+f(z,t)-A2^{-m+1}.e^{-At+AT}\\
 &=\log\det (v_m)_{\alpha\bar{\beta}}-Av_m+f(z,t).
 \end{array}$
 \end{flushleft}
 Hence, $u=\lim v_m=\lim u_m$ is a weak solution of \eqref{KRF}.
 \end{proof}
 \begin{Prop}\label{exist unique.prop.sec weak}
 Under the hypotheses of Theorem \ref{main.existence}, there exists a weak solution of \eqref{KRF}.
 \end{Prop}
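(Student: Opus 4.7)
The plan is to reduce to Lemma \ref{weaker condition.lem.sec weak} by constructing smooth strictly plurisubharmonic approximations $u_0^m$ of $u_0$ together with smooth boundary data $\varphi_m$ approximating $\varphi$, arranged so that the triple $(u_0^m, \varphi_m, f)$ verifies the hypotheses of the Hou-Li theorem (Theorem \ref{houli}). Applying Hou-Li will produce smooth solutions $u_m$; the comparison principle (Corollary \ref{compa log}) will then yield the one-sided bound $u_{m+1} \leq u_m + 2^{-m}$; and Lemma \ref{weaker condition.lem.sec weak} will identify $u = \lim u_m$ as a weak solution of \eqref{KRF}.

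For the approximation of $u_0$, since $u_0$ is plurisubharmonic on a neighborhood $\Omega' \supset \bar\Omega$, the standard convolutions $u_0^m := u_0 \ast \chi_{\epsilon_m} + \epsilon_m |z|^2$ are smooth and strictly psh on $\bar\Omega$ for sufficiently small $\epsilon_m$, and a diagonal choice of $\epsilon_m \searrow 0$ gives $u_0^m \searrow u_0$ pointwise on $\bar\Omega$. For the boundary data $\varphi_m$, I would prescribe on $\partial\Omega \times \{0\}$ the $0$-jet $\varphi_m(z,0) = u_0^m(z)$ and (to meet the compatibility condition) the $1$-jet
\[
\dot\varphi_m(z,0) = \log\det(u_0^m)_{\alpha\bar\beta}(z) - A u_0^m(z) + f(z,0),
\]
then extend smoothly to $\bar\Omega \times [0,T)$, setting $\varphi_m = \varphi$ for $t \geq \delta_m$, with $\delta_m \searrow 0$ chosen small enough --- in particular smaller than $2^{-m-2}$ divided by the $C^0$ norm of the prescribed $1$-jet --- that $\|\varphi_m - \varphi\|_{C^0(\bar\Omega \times [0,T))} \leq 2^{-m-2}$. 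Such an interpolation exists by a Borel-type argument.

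With these data Theorem \ref{houli} applies (with $F(t,z,u) = -Au + f(z,t)$, non-increasing in $u$ since $A \geq 0$, and no subsolution hypothesis needed since $\Omega$ is strictly pseudoconvex) and yields smooth solutions $u_m \in C^{\infty}(\bar\Omega \times [0,T))$ with $u_m(\cdot,t) \in SPSH(\Omega)$. Since $u_m$ and $u_{m+1}$ satisfy the same equation, Corollary \ref{compa log} gives
\[
\sup_{\bar\Omega \times [0,T)}(u_{m+1} - u_m) \leq \max\Bigl\{0,\, \sup_{\partial_P(\Omega \times (0,T))} (u_{m+1} - u_m)\Bigr\};
\]
on the parabolic boundary, $u_{m+1} - u_m = u_0^{m+1} - u_0^m \leq 0$ at $t = 0$ and $u_{m+1} - u_m = \varphi_{m+1} - \varphi_m \leq 2^{-m-1}$ on $\partial\Omega \times (0,T)$, so $u_{m+1} \leq u_m + 2^{-m}$ globally. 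The remaining convergence statements $u_m(z,0) = u_0^m(z) \to u_0(z)$ on $\bar\Omega$ and $u_m|_{\partial\Omega \times [0,T)} = \varphi_m \to \varphi$ are immediate, so Lemma \ref{weaker condition.lem.sec weak} identifies $u = \lim u_m$ as a weak solution.

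The main technical obstacle is the construction in the second paragraph: the compatibility-dictated $1$-jet $\dot\varphi_m(z,0)$ involves $\log\det(u_0^m)_{\alpha\bar\beta}$, which may blow up as $m \to \infty$ if $u_0$ has singularities accumulating at $\partial\Omega$. The shrinking cutoff $\delta_m$ is precisely the mechanism that keeps the $C^0$ distance $\|\varphi_m - \varphi\|$ under control in spite of the unbounded $1$-jet; choosing $\delta_m$ adaptively in terms of the size of the prescribed data, rather than at a fixed rate, is the crucial point.
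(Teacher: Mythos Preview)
Your proposal is correct and follows essentially the same strategy as the paper: regularize $u_0$ by convolution plus $\epsilon|z|^2$, build compatible boundary data $\varphi_m$ via a time-cutoff interpolation whose width is chosen adaptively against the (possibly blowing up) quantity $\log\det(u_0^m)_{\alpha\bar\beta}$, apply Hou--Li, use Corollary~\ref{compa log} on the parabolic boundary to get the almost-monotonicity $u_{m+1}\le u_m+2^{-m}$, and conclude via Lemma~\ref{weaker condition.lem.sec weak}. The paper writes the interpolation explicitly as $\varphi_m=\zeta(t/\epsilon_m)(tg_m+u_{0,m})+(1-\zeta(t/\epsilon_m))\varphi$ and passes to a subsequence to secure the $2^{-k}$ bound, whereas you invoke a Borel-type extension and arrange the bound directly, but these are cosmetic differences. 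One small point you leave implicit: your claimed bound $\|\varphi_m-\varphi\|_{C^0}\le 2^{-m-2}$ also requires $|u_0^m-u_0|\le 2^{-m-2}$ on $\partial\Omega$ (the $0$-jet at $t=0$), which no choice of $\delta_m$ can enforce; you need to use that $u_0|_{\partial\Omega}=\varphi(\cdot,0)|_{\partial\Omega}$ is continuous, so the convolutions converge uniformly there and the regularization parameter $\epsilon_m$ can be chosen accordingly --- the paper makes this step explicit via its $\delta_m=\sup_{\partial\Omega}(u_{0,m}-u_0)\to 0$.
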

 \begin{proof}
 Using the convolution of $u_0+\frac{|z|^2}{m}$ with smooth kernels, 
  we can take $u_{0,m}\in C^{\infty}
  (\bar{\Omega})\cap SPSH(\bar{\Omega})$ such that
  \begin{equation}\label{u0m.eq.proof exist.sec weak}
  u_{0,m}\searrow u_0.
    \end{equation}
  Note that $u_0|_{\partial\Omega}$ is continuous. Then 
  \begin{equation}\label{deltam.eq.proof exist.sec weak}
  \delta_m=\sup\limits_{z\in\partial\Omega}(u_{0,m}(z)-u_0(z))
  \stackrel{m\rightarrow\infty}{\longrightarrow} 0.
  \end{equation}
  
  We define $g_m\in C^{\infty}(\bar{\Omega})$ and
   $\varphi_m\in C^{\infty}(\bar{\Omega}\times [0,T))$ by\\
   $$g_m=\log\det (u_{0,m})_{\alpha\bar{\beta}}-Au_{0,m}+f(z,0),$$
   $$\varphi_m=\zeta(\frac{t}{\epsilon_m}) (tg_m+u_{0,m})+(1-\zeta(\frac{t}
   {\epsilon_m}))\varphi,$$
   where $\zeta$ is a smooth function on $\R$ such that $\zeta$ is decreasing,
    $\zeta|_{(-\infty,1]}=1$ and $\zeta|_{[2,\infty)}=0$. $\epsilon_m>0$ are 
  chosen such that  the sequences $\{\epsilon_m\}$,
  $\{\epsilon_m\sup|g_m|\}$ are decreasing to $0$. 
  
  $u_{0,m}$ and $\varphi_m$ satisfy the compatibility condition.
  By Theorem \ref{houli}, there exists $u_m\in C^{\infty}(\bar{\Omega}\times [0,T))$ satisfying 
   \begin{equation}\label{KRF_m.sec weak}
  \begin{cases}
  \begin{array}{ll}
  \dot{u}_m=\log\det (u_m)_{\alpha\bar{\beta}}-Au_m+f(z,t)\;\;\;&\mbox{on}\;\Omega\times (0,T),\\
  u_m=\varphi_m&\mbox{on}\;\partial\Omega\times [0,T),\\
  u_m=u_{0,m}&\mbox{on}\;\bar{\Omega}\times\{ 0\}.\\
  \end{array}
  \end{cases}
  \end{equation}
    It follows from \eqref{deltam.eq.proof exist.sec weak} that, for any $m>0$,

\begin{flushleft}
$\varphi_m\geq\zeta(\frac{t}{\epsilon_m}) u_0+(1-\zeta(\frac{t}
       {\epsilon_m}))\varphi-2\epsilon_m\sup|g_m|,$\\
       $\varphi_m\leq \zeta(\frac{t}{\epsilon_m}) u_0+(1-\zeta(\frac{t}
                   {\epsilon_m}))\varphi+2\epsilon_m\sup|g_m|+\delta_m.$
\end{flushleft}
 where $(z,t)\in\partial\Omega\times [0,T)$.
 
 Then  
 $$\sup\limits_{\partial\Omega\times [0,T)}|\varphi_m-\varphi|\leq 
 \sup\limits_{\partial\Omega\times [0,2\epsilon_m]}|\varphi-u_0|+2\epsilon_m\sup|g_m|+\delta_m.$$
 Note that $u_0(z)=\varphi(z,0)$ for any $z\in\partial\Omega$. Hence
 $$\sup\limits_{\partial\Omega\times [0,T)}|\varphi_m-\varphi|
 \stackrel{m\rightarrow \infty}{\longrightarrow} 0.$$
 Then we can choose a subsequence $\{\varphi_{m_k}\}$ such that
 \begin{equation}\label{varphimstab.eq.proof exist.sec weak}
 \sup\limits_{\partial\Omega\times [0,T)}|\varphi_{m_k}-\varphi|\leq 2^{-k-1}.
 \end{equation}
 for any $k>0$.
 
 Using \eqref{u0m.eq.proof exist.sec weak}, \eqref{varphimstab.eq.proof exist.sec weak}
 and applying Corollary \ref{compa log}, we have
 $$u_{m_k}+2^{-k}\geq u_{m_{k+1}}.$$ 
 It follows from Lemma \ref{weaker condition.lem.sec weak} that $u=\lim u_{m_k}$ is a weak solution of
 \eqref{KRF}.
 \end{proof}
 \begin{Prop}\label{compa.prop.weak sec}
 Assume that $u$ is a weak solution of \eqref{KRF} and $v$ is a weak solution of
 \begin{equation}\label{compa.eq.sec weak}
   \begin{cases}
   \begin{array}{ll}
   \dot{v}=\log\det (v_{\alpha\bar{\beta}})-Av+g(z,t)\;\;\;&\mbox{on}\;\Omega\times (0,T),\\
   v=\psi&\mbox{on}\;\partial\Omega\times [0,T),\\
   v=v_0&\mbox{on}\;\bar{\Omega}\times\{ 0\},\\
   \end{array}
   \end{cases}
   \end{equation}
   where $v_0$ is a plurisubharmonic function in a neighbourhood of
    $\bar{\Omega}$ and $g,\psi$ are smooth functions 
   in $\bar{\Omega}\times [0,T]$. If there are $A_1,A_2,A_3\geq 0$ such that 
   $$u_0\leq v_0+A_1,$$
   $$\varphi|_{\partial\Omega\times (0,T)}\leq \psi|_{\partial\Omega\times (0,T)}+A_2,$$
   $$f\leq g+A_3,$$
   then $u\leq v+\max \{A_1,A_2\}+A_3T$.
 \end{Prop}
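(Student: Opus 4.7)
The plan is to promote $v_n$ to a smooth supersolution of the $u$-equation by a time-dependent shift, apply the smooth comparison principle (Corollary~\ref{compa log}) between the approximants $u_m$ of $u$ and the shifted $v_n$, and then pass to the limit in both approximating sequences.

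First, fix smooth decreasing sequences $u_m\searrow u$ and $v_n\searrow v$ provided by the weak-solution definition. To absorb the gap $A_3$ between $f$ and $g$, define
\[
\beta(t)=
\begin{cases}
A_3\,t & \text{if } A=0,\\
\tfrac{A_3}{A}(1-e^{-At}) & \text{if } A>0,
\end{cases}
\]
so that $\dot\beta+A\beta=A_3$, $\beta(0)=0$, and $0\le\beta(t)\le A_3T$. A direct computation shows that $\tilde v_n:=v_n+\beta(t)$ satisfies
\[
\dot{\tilde v}_n=\log\det(\tilde v_n)_{\alpha\bar\beta}-A\tilde v_n+(g+A_3)\ge\log\det(\tilde v_n)_{\alpha\bar\beta}-A\tilde v_n+f,
\]
so $\tilde v_n$ is a smooth supersolution for the equation governing $u$.

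Next, fix $\varepsilon>0$, $T'\in(0,T)$, and a large $n$, and control $u_m-\tilde v_n$ on the parabolic boundary. On $\partial\Omega\times[0,T']$, the sequences $u_m\searrow\varphi$ and $v_n\searrow\psi$ converge monotonically to smooth, hence continuous, limits, so Dini's theorem yields $u_m\le v_n+A_2+\varepsilon$ uniformly once $m$ is large. At $t=0$, the limit $u_0$ is only upper semicontinuous and Dini does not apply directly; instead I consider the decreasing family of closed subsets $K_m:=\{z\in\bar\Omega : u_m(z,0)-v_n(z,0)-A_1-\varepsilon\ge 0\}$ of the compact $\bar\Omega$. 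Since $u_0(z)-v_n(z,0)-A_1-\varepsilon\le -\varepsilon<0$, the intersection $\bigcap_m K_m$ is empty, so by compactness some $K_{M}$, and hence every $K_m$ with $m\ge M$, is empty, yielding $u_m(\cdot,0)\le v_n(\cdot,0)+A_1+\varepsilon$ uniformly. Because $\beta\ge 0$, these two bounds combine to give $\sup_{\partial_P(\Omega\times(0,T'))}(u_m-\tilde v_n)\le\max\{A_1,A_2\}+\varepsilon$.

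Corollary~\ref{compa log} then produces $u_m\le\tilde v_n+\max\{A_1,A_2\}+\varepsilon\le v_n+A_3T+\max\{A_1,A_2\}+\varepsilon$ on $\bar\Omega\times[0,T']$. Letting successively $m\to\infty$, $n\to\infty$, $\varepsilon\to 0$, and $T'\to T$ yields the desired inequality $u\le v+\max\{A_1,A_2\}+A_3T$. The main obstacle is the initial-time step: the monotone convergence $u_m(\cdot,0)\searrow u_0$ to a merely upper semicontinuous limit is not in general uniform, and it is the Dini-type compactness argument above, exploiting the smoothness of the comparison function $v_n$, that restores the uniform bound needed to feed the parabolic comparison principle.
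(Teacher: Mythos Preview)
Your proof is correct and follows essentially the same route as the paper: shift the smooth approximants $v_n$ in time to make them supersolutions of the $u$-equation, use a compactness argument on the parabolic boundary to get $u_m-\tilde v_n\le\max\{A_1,A_2\}+\varepsilon$ there for large $m$, apply Corollary~\ref{compa log}, and pass to the limit. The only cosmetic differences are that the paper uses the cruder shift $w_m=v_m+A_3 t$ (which still yields a supersolution since $A\ge 0$) and runs the same nested-compact-sets argument on the lateral boundary instead of invoking Dini there.
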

 \begin{proof}
 Assume that $u_m\in C^{\infty}(\bar{\Omega}\times [0,T))$ satisfies 
 \begin{equation}\label{um.eq.sec weak}
     \begin{cases}
     \begin{array}{ll}
     u_m(.,t)\in SPSH(\Omega)\\
     \dot{u}_m=\log\det (u_m)_{\alpha\bar{\beta}}-Au_m+f(z,t)\;\;\;&\mbox{on}\;\Omega\times (0,T),\\
     u_m\searrow\varphi&\mbox{on}\;\partial\Omega\times [0,T),\\
     u_m\searrow u_0&\mbox{on}\;\bar{\Omega}\times\{ 0\},
     \end{array}
     \end{cases}
     \end{equation}
  and that $v_m\in C^{\infty}(\bar{\Omega}\times [0,T))$ satisfies
 \begin{equation}
    \begin{cases}
    \begin{array}{ll}
    v_m(.,t)\in SPSH(\Omega)\\
    \dot{v}_m=\log\det (v_m)_{\alpha\bar{\beta}}-Av_m+g(z,t)\;\;\;&\mbox{on}\;\Omega\times (0,T),\\
    v_m\searrow\psi&\mbox{on}\;\partial\Omega\times [0,T),\\
    v_m\searrow v_0&\mbox{on}\;\bar{\Omega}\times\{ 0\}.
    \end{array}
    \end{cases}
    \end{equation}
    Fix $m>0,\epsilon>0,0<T'<T$. We need to show that there exists $k_m>0$ satisfying
    \begin{equation}\label{weak compa.eq.sec weak}
    u_{k_m}\leq v_m+\max\{A_1,A_2\}+A_3T+\epsilon,\;\;\forall  (z,t)\in \Omega\times (0,T').
    \end{equation}
    Indeed, if we denote $w_m=v_m+A_3t$ then
    $$\dot{w}_m\geq \log\det (w_m)_{\alpha\bar{\beta}}-Aw_m+g(z,t)+A_3\geq 
    \log\det (w_m)_{\alpha\bar{\beta}}-Aw_m+f(z,t)$$
    It follows from Corollary \ref{compa log} that
    $$u_k-v_m=u_k-w_m+A_3t\leq \sup\limits_{\partial_P(\Omega\times (0,T'))} (u_k-w_m)+A_3T,\;\;\forall k>0,
    (z,t)\in \Omega\times (0,T').$$
    Note that
    $$\cap_{k>0}\{(z,t)\in\partial\Omega\times [0,T']:u_k(z,t)\geq w_m(z,t)+A_2+\epsilon\}=\emptyset.$$ 
    By the compactness of $\partial\Omega\times [0,T']$, there exists $k^{'}_m>0$ such that
    $$
    \cap_{k< k^{'}_m}\{(z,t)\in\partial\Omega\times [0,T']:u_k(z,t)\geq w_m(z,t)+A_2+\epsilon\}=\emptyset.
    $$
    By the monotonicity of $\{u_k\}$, we have
    $$u_{k^{'}_m}(z,t)<w_m(z,t)+A_2+\epsilon,\;\;\forall (z,t)\in\partial \Omega\times [0,T'].$$
    Similarly, there exists $k^{''}_m>0$ such that
    $$u_{k^{''}_m}(z,0)<w_m(z,0)+A_1+\epsilon,\;\;\forall z\in\bar{\Omega}.$$
    Denote $k_m=\max\{k^{'}_m,k^{''}_m\}$. We have
    $$\sup\limits_{\partial_P(\Omega\times (0,T'))} (u_{k_m}-w_m)\leq \max\{A_1,A_2\}+\epsilon.$$
    Then 
    $$u_{k_m}\leq v_m+\max\{A_1,A_2\}+A_3T+\epsilon,\;\;\forall  (z,t)\in \Omega\times (0,T').$$
    When $m\rightarrow \infty$, we obtain
    $$u\leq v+\max\{A_1,A_2\}+A_3T+\epsilon,\;\;\forall  (z,t)\in \Omega\times (0,T').$$
    When $\epsilon\rightarrow 0$ and $T'\rightarrow T$, we obtain
    $$u\leq v+\max\{A_1,A_2\}+A_3T, \;\;\forall  (z,t)\in \Omega\times (0,T).$$
 \end{proof}
 \begin{Cor}\label{unique.cor.sec weak}
 The weak solution of \eqref{KRF} is unique.
 \end{Cor}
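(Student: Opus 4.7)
The plan is to deduce uniqueness directly from the comparison principle just established in Proposition \ref{compa.prop.weak sec}. Suppose $u$ and $v$ are both weak solutions of the same equation \eqref{KRF}, meaning both come with the same boundary data $\varphi$, initial data $u_0$, and source term $f$.

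First I would apply Proposition \ref{compa.prop.weak sec} to the pair $(u,v)$ with the choices $A_1 = A_2 = A_3 = 0$, which is legal because $u_0 \le v_0 = u_0$, $\varphi|_{\partial\Omega\times(0,T)} \le \psi|_{\partial\Omega\times(0,T)} = \varphi|_{\partial\Omega\times(0,T)}$, and $f \le g = f$ trivially. The conclusion of the proposition then reads $u \le v$ on $\Omega \times (0,T)$. Next I would swap the roles of $u$ and $v$ and apply the same proposition again, obtaining $v \le u$. Combining the two inequalities yields $u = v$ on $\Omega \times (0,T)$.

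It remains to promote the equality to all of $\bar\Omega \times [0,T)$. On the parabolic boundary $\partial_P(\Omega \times (0,T))$ both $u$ and $v$ equal the prescribed data ($\varphi$ on $\partial\Omega \times [0,T)$ and $u_0$ on $\bar\Omega \times \{0\}$) as upper semicontinuous limits of the defining sequences, so the equality extends to the closure without further work.

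There is essentially no obstacle here: the whole content of the argument sits in Proposition \ref{compa.prop.weak sec}, which was already proved by approximation and the parabolic comparison principle of Corollary \ref{compa log}. The only minor point to keep in mind is that the comparison proposition is stated for weak solutions possibly defined by different approximating sequences, so one should not accidentally assume $u$ and $v$ come from the same sequence $u_m$; the proof of Proposition \ref{compa.prop.weak sec} is robust enough to handle arbitrary defining sequences, which is exactly what uniqueness requires.
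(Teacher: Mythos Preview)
Your proposal is correct and is exactly the intended argument: the paper states Corollary \ref{unique.cor.sec weak} without proof because it follows immediately from Proposition \ref{compa.prop.weak sec} by applying it twice with $A_1=A_2=A_3=0$ and the roles of $u$ and $v$ interchanged. Your additional remark about the parabolic boundary is fine but not strictly needed, since by the very definition of weak solution both $u$ and $v$ coincide with $\varphi$ on $\partial\Omega\times[0,T)$ and with $u_0$ on $\bar\Omega\times\{0\}$.
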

 \begin{Rem}
 By Proposition \ref{exist unique.prop.sec weak} and Corollary \ref{unique.cor.sec weak}, if 
 $0<T'<T$ and $v$ is the 
 weak solution of \eqref{KRF} on $\bar{\Omega}\times [0,T')$ then
  $$v=u|_{\bar{\Omega}\times [0,T')},$$
  where $u$ is the  weak solution of \eqref{KRF} on $\bar{\Omega}\times [0,T)$.
 \end{Rem}
 \begin{Prop}\label{stab.prop.weak sec}
 Assume that $\psi_m,g_m$ are smooth functions in $\bar{\Omega}\times [0,T]$ and $v_{0,m}$ is a plurisubharmonic
 function in a neighbourhood of $\bar{\Omega}$ satisfying
\begin{center}
 $\begin{cases}
 \psi_m\searrow \varphi,\; g_m\searrow f,\; v_{0,m}\searrow u_0, \\
  v_{0,m}|_{\partial\Omega}=\psi_m(.,0)|_{\partial\Omega}.
 \end{cases}$
 \end{center}
 Suppose that $v_m\in USC(\bar{\Omega}\times [0,T))$ is the weak solution of 
 \begin{center}
 $\begin{cases}
 \begin{array}{ll}
  \dot{v}_m=\log\det (v_m)_{\alpha\bar{\beta}}-Av_m+g_m(z,t)\;\;\;&\mbox{on}\;\Omega\times (0,T),\\
  v_m=\psi_m&\mbox{on}\; \partial\Omega\times [0,T),\\
  v_m=v_{0,m}&\mbox{on}\; \bar{\Omega}\times \{0\}.
  \end{array}
 \end{cases}$
 \end{center}
 Then $v_m\searrow u$, where $u$ is the weak solution of \eqref{KRF}.
 \end{Prop}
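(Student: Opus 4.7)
The plan is to establish the two inequalities $u \le \lim v_m$ and $\lim v_m \le u$ separately, both by invoking the already-proved comparison principle (Proposition \ref{compa.prop.weak sec}). The lower bound is immediate: since $u_0 \le v_{0,m}$, $\varphi \le \psi_m$, $f \le g_m$, applying Proposition \ref{compa.prop.weak sec} with $A_1=A_2=A_3=0$ gives $u \le v_m$ for every $m$. The same argument applied to consecutive indices yields $v_{m+1} \le v_m$, so the sequence is decreasing and its limit exists.

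For the upper bound, fix an approximating sequence $u_j \searrow u$ of smooth solutions coming from the definition of the weak solution $u$, with boundary values $\varphi_j \searrow \varphi$ and initial values $u_{0,j} \searrow u_0$. Each $u_j$ is itself a (trivial, constant sequence) weak solution of its own smooth Cauchy problem. I will show that for every fixed $j$ and every $\epsilon > 0$, there exists $m_0 = m_0(j,\epsilon)$ such that for $m \ge m_0$,
\begin{equation*}
v_{0,m} \le u_{0,j} + \epsilon,\qquad \psi_m \le \varphi_j + \epsilon \text{ on } \partial\Omega\times[0,T'],\qquad g_m \le f + \epsilon \text{ on } \bar\Omega\times[0,T'],
\end{equation*}
for any $T' < T$. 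For the third inequality, Dini's theorem applied to the decreasing sequence of continuous functions $g_m - f$ on the compact set $\bar\Omega\times[0,T']$ gives uniform convergence to $0$. For the second, since $\varphi_j \ge \varphi$ and $\psi_m \searrow \varphi$ with $\psi_m - \varphi_j$ continuous and decreasing, Dini again gives the required bound. The first inequality is the only subtle one: $v_{0,m}$ is merely upper semicontinuous, and $v_{0,m} - u_{0,j}$ is a decreasing sequence of usc functions on compact $\bar\Omega$ converging pointwise to $u_0 - u_{0,j} \le 0$. A Dini-type argument for usc sequences works by compactness: if it failed, picking $x_m$ with $v_{0,m}(x_m) - u_{0,j}(x_m) \ge \epsilon$ and passing to a subsequence $x_m \to x_*$, the usc of each $v_{0,n}$ gives $v_{0,n}(x_*) \ge u_{0,j}(x_*) + \epsilon$ for every $n$, whence $u_0(x_*) \ge u_{0,j}(x_*) + \epsilon$, contradicting $u_{0,j} \ge u_0$.

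Once the three data comparisons are in hand, Proposition \ref{compa.prop.weak sec} applied to the weak solutions $v_m$ and $u_j$ (with constants $A_1 = A_2 = A_3 = \epsilon$) yields
\begin{equation*}
v_m \le u_j + \epsilon + \epsilon T \quad\text{on } \Omega \times (0,T'),
\end{equation*}
for all $m \ge m_0(j,\epsilon)$. Passing to the limit as $m \to \infty$, then $\epsilon \to 0$, then $T' \to T$, gives $\lim_m v_m \le u_j$. Finally, letting $j \to \infty$ and using $u_j \searrow u$ gives $\lim_m v_m \le u$. Combined with the lower bound, this proves $v_m \searrow u$.

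The main obstacle is the usc Dini-type lemma for $v_{0,m} \searrow u_0$: the decreasing approximants are only upper semicontinuous, so one cannot invoke classical Dini directly. Handling this via a compactness argument, as sketched above, is the crucial technical step that makes the reduction to the smooth comparison Proposition \ref{compa.prop.weak sec} possible. All other ingredients, including the interchange of the limits in $m$, $j$, $\epsilon$ and $T'$, are routine and parallel what was done in the proof of Proposition \ref{compa.prop.weak sec}.
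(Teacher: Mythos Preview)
Your proof is correct and follows the same overall architecture as the paper's: both directions come from Proposition~\ref{compa.prop.weak sec}, the lower bound $u\le v_m$ and the monotonicity of $\{v_m\}$ are immediate, and the upper bound is obtained by comparing $v_m$ against a fixed smooth approximant $u_j$ of $u$ once the data $(v_{0,m},\psi_m,g_m)$ have been pushed below $(u_{0,j},\varphi_j,f)+\epsilon$.

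The one place where you diverge from the paper is the initial-data comparison $v_{0,m}\le u_{0,j}+\epsilon$. The paper splits $\bar\Omega$ into a compact interior piece $K$ and a boundary collar $\bar\Omega\setminus K$: on $K$ it invokes Hartogs' lemma (a decreasing sequence of psh functions eventually drops below any continuous majorant of its limit on compacta), and on the collar it uses a harmonic majorant $h$ with $h|_{\partial\Omega}=u_j(\cdot,0)|_{\partial\Omega}$, together with the subharmonicity of $v_{0,m}-h$ and the boundary compatibility $v_{0,m}|_{\partial\Omega}=\psi_m(\cdot,0)|_{\partial\Omega}$, to control the difference there. Your route is more elementary: you observe that $v_{0,m}-u_{0,j}$ is a decreasing sequence of usc functions on the compact set $\bar\Omega$ converging pointwise to $u_0-u_{0,j}\le 0<\epsilon$, and a one-line compactness argument (open cover by sublevel sets) forces the whole sequence below $\epsilon$ eventually. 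This avoids both Hartogs' lemma and the harmonic-function trick, at the cost of using nothing specific to plurisubharmonicity; either way the remaining limits in $m$, $j$, $\epsilon$, $T'$ proceed identically.
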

 \begin{proof}
 By Proposition \ref{compa.prop.weak sec}, we have
 $$v_1\geq v_2\geq...\geq v_m\geq ...\geq u.$$
 We need to show that $\lim v_m\leq u$.\\
 Let $0<T'<T$ and $\epsilon>0$. By Dini's theorem, there exists $m_1>0$ such that 
 \begin{center}
 $\psi_{m_1}<\varphi+\epsilon$ on $\partial\Omega\times [0,T']$,\\
 $g_{m_1}<f+\epsilon$ on $\bar{\Omega}\times [0,T']$.
 \end{center} 
 Assume that $u_m\in C^{\infty}(\bar{\Omega}\times [0,T))$ satisfies \eqref{um.eq.sec weak}. Fix $k>0$.
 Let $h$ be a harmonic function in $\bar{\Omega}$ such that $h|_{\partial\Omega}=u_k(.,0)|_{\partial\Omega}$.
 Then there exists a subset $K\Subset\Omega$ such that
             $$(h-u_k(.,0))|_{\bar{\Omega}\setminus K}\leq \epsilon.$$
  Note that 
  $$v_{0,m_1}-h\leq \sup\limits_{\partial\Omega}(v_{0,m_1}-h)
  \leq \sup\limits_{\partial\Omega}(v_{0,m_1}-u_0)\leq \epsilon.$$
  Then $(v_{0,m_1}-u_k(.,0))|_{\bar{\Omega}\setminus K}\leq 2\epsilon$.
  
  Moreover, by Hartogs lemma, there exists $m_2>0$ such that
  $$(v_{0,m_2}-u_k(.,0))|_K\leq \epsilon.$$
  Then, for any $m>\max\{m_1,m_2\}$, we have
  \begin{center}
   $\psi_{m}<u_k+\epsilon$ on $\partial\Omega\times [0,T']$,\\
   $g_{m}<f+\epsilon$ on $\bar{\Omega}\times [0,T']$,\\
   $v_{0,m}\leq u_k (.,0)+2\epsilon$ on $\bar{\Omega}$.
   \end{center} 
  It follows from Proposition \ref{compa.prop.weak sec} that
  \begin{center}
  $v_m(z,t)\leq u_k(z,t)+(T+2)\epsilon$ on $\Omega\times (0,T')$.
  \end{center}
  Then 
  \begin{center}
  $\lim\limits_{m\rightarrow \infty} v_m(z,t)\leq u_k(z,t)+(T+2)\epsilon$ on $\Omega\times (0,T')$.
  \end{center}
  When $k\rightarrow\infty$, $\epsilon\rightarrow 0$ and $T' \rightarrow T$, we obtain
  $$\lim\limits_{m\rightarrow \infty }v_m \leq u.$$
 \end{proof}
 \begin{Lem}\label{utminusu0.lem.weak sec}
 Suppose that $\psi,g\in C^{\infty}(\bar{\Omega}\times [0,T])$. 
 Assume that $v\in C^{\infty}(\bar{\Omega}\times [0,T))$ satisfies
 \begin{equation}
    \begin{cases}
    \begin{array}{ll}
    v(.,t)\in SPSH(\bar{\Omega}),\\
    \dot{v}=\log\det (v_{\alpha\bar{\beta}})-Av+g(z,t)\;\;\;&\mbox{on}\;\Omega\times (0,T),\\
    v=\psi&\mbox{on}\;\partial\Omega\times [0,T).\\
    \end{array}
    \end{cases}
    \end{equation}
    Then $$v(z,t)-v(z,0)\geq -C(t),$$ for any $(z,t)\in \bar{\Omega}\times [0,T)$. Here $C(t)$ is defined by
    $$C(t)=\inf\limits_{1>\epsilon>0}((-n\log\epsilon+A\sup |\psi|+
    \sup |g|)t-\epsilon\inf\rho)+
    \sup\limits_{t'\in [0,t]}\sup\limits_{z\in\partial\Omega}|\psi (z,t')-\psi (z,0)|,$$
    where $\rho\in C^{\infty}(\bar{\Omega})$ such that $dd^c\rho\geq dd^c|z|^2$ and $\rho|_{\partial\Omega}=0$.
     \end{Lem}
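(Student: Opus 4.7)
The plan is to exhibit, for each fixed $t_0 \in (0,T)$ and each $\epsilon \in (0,1)$, an explicit smooth subsolution $\underline{w}$ of the parabolic equation that sits below $v$ on the parabolic boundary $\partial_P(\Omega \times (0,t_0))$, and then to invoke the comparison principle (Corollary \ref{compa log}) to deduce $\underline{w} \leq v$ on $\bar\Omega \times [0,t_0]$. Evaluating at $t = t_0$ and optimising over $\epsilon$ will produce the bound claimed in the statement.

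Concretely, I would try the ansatz
$$\underline{w}(z,t) = v(z,0) + \epsilon\rho(z) + \bigl(n\log\epsilon - A\sup|\psi| - \sup|g|\bigr)t - \beta(t_0),$$
where $\beta(t_0) := \sup_{t' \in [0,t_0]}\sup_{z \in \partial\Omega}|\psi(z,t') - \psi(z,0)|$. The design is transparent: the term $\epsilon\rho$ combined with the strict plurisubharmonicity of $v(\cdot,0)$ produces, via Minkowski's determinant inequality together with $dd^c\rho \geq dd^c|z|^2$, the lower bound $\log\det(\underline{w}_{\alpha\bar\beta}) \geq n\log\epsilon$; the linear-in-$t$ coefficient is chosen precisely so that $\dot{\underline w}$ matches this lower bound modulo the $-A\underline{w}+g$ terms; and the constant $-\beta(t_0)$ absorbs the discrepancy between $\psi(z,0)$ and $\psi(z,t)$ on the lateral boundary.

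The verification then splits into three checks. \emph{Subsolution inequality:} I would bound $\underline{w} \leq \sup|\psi|$ by combining the maximum principle for plurisubharmonic functions (which gives $v(z,0) \leq \sup_{\partial\Omega}\psi(\cdot,0) \leq \sup|\psi|$) with the nonpositivity of $\epsilon\rho$, of the time term (valid since $\epsilon<1$ forces $n\log\epsilon - A\sup|\psi| - \sup|g| \leq 0$), and of $-\beta(t_0)$; this yields $-A\underline w + g \geq -A\sup|\psi| - \sup|g|$, which matches $\dot{\underline w}$. \emph{Initial inequality:} $\underline{w}(z,0) = v(z,0) + \epsilon\rho(z) - \beta(t_0) \leq v(z,0)$ since $\rho \leq 0$ and $\beta(t_0) \geq 0$. \emph{Lateral inequality:} on $\partial\Omega \times [0,t_0]$, $\rho|_{\partial\Omega}=0$ and the nonpositive time term give $\underline{w}(z,t) \leq \psi(z,0) - \beta(t_0) \leq \psi(z,t) = v(z,t)$ by definition of $\beta$.

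The main subtlety is making sure all signs align so that a single $\underline{w}$ is genuinely a subsolution; I expect the case $A>0$ to require the most care, because one must check that the extra $-A\underline w$ contribution does not undercut the matching between $\dot{\underline w}$ and $n\log\epsilon$. Once $\underline w \leq v$ on $\bar\Omega\times[0,t_0]$ is established, evaluating at $t=t_0$ gives
$$v(z,t_0) - v(z,0) \geq \epsilon\inf\rho - \bigl(-n\log\epsilon + A\sup|\psi| + \sup|g|\bigr)t_0 - \beta(t_0),$$
and taking the infimum over $\epsilon\in(0,1)$ yields exactly $-C(t_0)$.
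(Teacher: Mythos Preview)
Your proposal is correct and follows essentially the same route as the paper: the paper also fixes $t_0$ and $\epsilon$, introduces the barrier $w(z,t)=v(z,0)+\epsilon\rho(z)-A_\epsilon t-\delta_{t_0}$ (your $\underline{w}$ with $A_\epsilon=-n\log\epsilon+A\sup|\psi|+\sup|g|$ and $\delta_{t_0}=\beta(t_0)$), verifies the same three parabolic-boundary and subsolution inequalities, and then applies Corollary~\ref{compa log}. The only cosmetic difference is that the paper bounds $\log\det w_{\alpha\bar\beta}\geq n\log\epsilon+\log\det\rho_{\alpha\bar\beta}\geq n\log\epsilon$ directly from $dd^c v(\cdot,0)>0$ and $dd^c\rho\geq dd^c|z|^2$, rather than invoking Minkowski's inequality by name.
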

     \begin{proof}
     Fix $0<\epsilon<1$ and $0<t_0<T$. Denote
     $$A_{\epsilon}=-n\log\epsilon+A\sup |\psi|+ \sup |g|,$$
    $$\delta_{t_0}=\sup\limits_{t\in [0,t_0]}\sup\limits_{z\in\partial\Omega}|\psi (z,t)-\psi (z,0)|.$$
     We consider 
     $$w(z,t)=v(z,0)-A_{\epsilon}t+\epsilon\rho (z)-\delta_{t_0}.$$
     We have
     \begin{flushleft}
     $\begin{array}{ll}
     \dot{w}-\log\det w_{\alpha\bar{\beta}}+Aw-g
     &\leq -A_{\epsilon}-n\log\epsilon-\log\det\rho_{\alpha\bar{\beta}}+A\sup w+\sup |g|\\
     &\leq -A_{\epsilon}-n\log\epsilon+A\sup \psi+\sup |g|\\
     &\leq 0.
     \end{array}$
     \end{flushleft}
     Moreover,
     \begin{center}
     $w(z,0)\leq v(z,0)$ on $\bar{\Omega}$,\\
     $w(z,t)\leq v(z,0)-\delta_{t_0}\leq v(z,t)$ on $\partial\Omega\times [0,t_0)$.
     \end{center}
     Applying Corollary \ref{compa log}, we obtain
     $$v(z,t)\geq w(z,t)=v(z,0)-A_{\epsilon}t+\epsilon \rho(z)-\delta_{t_0},\;\;
     \forall (z,t)\in\bar{\Omega}\times [0,t_0).$$
     Thus
     $$v(z,t_0)\geq v(z,0)-\inf\limits_{1>\epsilon>0}\{A_{\epsilon}t_0-\epsilon \inf\rho\}-\delta_{t_0}.$$
     \end{proof}
  \begin{Rem} Under the conditions of Lemma \ref{utminusu0.lem.weak sec}, 
  $C(t)$ is an increasing function satisfying
  $$\lim\limits_{t\rightarrow 0}C(t)=0.$$
  \end{Rem}
 \begin{Prop}\label{L1weak.prop.secweak}
 If a function $u\in C^{\infty}(\bar{\Omega}\times (0,T))$ satisfies\\
 \begin{equation}\label{L1weak.eq. sec weak}
  \begin{cases}
  \begin{array}{ll}
  u(.,t)\in SPSH(\Omega),\\
  \dot{u}=\log\det (u_{\alpha\bar{\beta}})-Au+f(z,t)\;\;\;&\mbox{on}\;\Omega\times (0,T),\\
  u=\varphi&\mbox{on}\;\partial\Omega\times [0,T),\\
  u(.,t)\stackrel{L^1}{\longrightarrow}u_0.
  \end{array}
  \end{cases}
  \end{equation}
  Then $u$ is the weak solution of \eqref{KRF}.
 \end{Prop}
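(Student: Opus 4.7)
The plan is to identify $u$ with the unique weak solution $\tilde u$ of \eqref{KRF} furnished by Proposition \ref{exist unique.prop.sec weak} and Corollary \ref{unique.cor.sec weak}. Fix smooth decreasing approximants $v_m\in C^\infty(\bar\Omega\times[0,T))$ of $\tilde u$ obtained from Proposition \ref{exist unique.prop.sec weak}, with $v_{0,m}=v_m(\cdot,0)\searrow u_0$, $v_m|_{\partial\Omega}=\psi_m\searrow\varphi$, and $v_m\searrow\tilde u$ by Proposition \ref{stab.prop.weak sec}.

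For $u\le\tilde u$, I fix $m$ and $\eta>0$ and apply Corollary \ref{compa log} to $u$ and $v_m$ on $\Omega\times(\delta,T')$ for small $\delta>0$ and $T'<T$. The lateral estimate $u-v_m=\varphi-\psi_m\le 0$ is automatic. Since $v_m$ is smooth up to $t=0$, $v_m(\cdot,\delta)\to v_{0,m}$ uniformly, so it suffices to show $u(\cdot,\delta)\le v_{0,m}+\eta/2$ pointwise on $\bar\Omega$ for $\delta$ small. This I obtain by combining a Hartogs-type upper-envelope argument applied to the $L^1$-convergent plurisubharmonic sequence $u(\cdot,\delta)\to u_0$ against the continuous majorant $v_{0,m}\ge u_0$ (which delivers the bound on any compact $K\Subset\Omega$) with a plurisuperharmonic barrier $\varphi(\cdot,t)-K_0\rho$ for $K_0$ large, which forces $u\le\varphi(\cdot,t)-K_0\rho$ by the maximum principle and handles the thin layer $\{|\rho|\le r\}$ via the matching of boundary values. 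Letting $\eta,\delta\to0$, $T'\to T$, and finally $m\to\infty$ gives $u\le\tilde u$.

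For the reverse inequality $\tilde u\le u$, Lemma \ref{utminusu0.lem.weak sec} does not apply to $u$ directly, but it applies to the time-translate $u(\cdot,\cdot+\epsilon_0)\in C^\infty(\bar\Omega\times[0,T-\epsilon_0))$, yielding $u(z,\epsilon)\ge u(z,\epsilon_0)-C(\epsilon-\epsilon_0)$ for $0<\epsilon_0<\epsilon<T$. The plurisubharmonic sequence $u(\cdot,\epsilon_0)$ $L^1$-converges to $u_0$, so the identity $(\limsup_{\epsilon_0\to0}u(\cdot,\epsilon_0))^*\equiv u_0$ from plurisubharmonic theory guarantees that for every $z\in\Omega$ there exist $w_k\to z$ and $\epsilon_0^{(k)}\to0$ with $u(w_k,\epsilon_0^{(k)})\to u_0(z)$. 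Evaluating the translated inequality at $w_k$ and using the smoothness of $u(\cdot,\epsilon)$, I expect to deduce the pointwise lower bound $u(z,\epsilon)\ge u_0(z)-C(\epsilon)$ on $\Omega\times(0,T)$. Combined with the analogous lower bound for $v_m$ from Lemma \ref{utminusu0.lem.weak sec}, which gives pointwise convergence $\tilde u(\cdot,\epsilon)\to u_0$ after passing $m\to\infty$, a further application of Corollary \ref{compa log} to $v_m-u$ along a diagonal sequence $(m,\delta_m)$ should yield $\tilde u=\lim v_m\le u$.

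The hard part is this last comparison step: the gap $v_{0,m}-u_0$ is in general not uniformly bounded on $\bar\Omega$ when $u_0$ has polar singularities, so the initial-data comparison at $t=\delta_m$ cannot be made uniform at fixed $m$ and has to be performed after passing $m\to\infty$ along a carefully chosen sequence $\delta_m\to0$. I plan to control the boundary layer by the same barrier used in Step 1, and to match the three small quantities $\sup_{\partial\Omega}|\psi_m-\varphi|$, $\sup_{K_m}(v_{0,m}-u_0)$ on exhausting compacta $K_m\Subset\Omega$, and $C(\delta_m)$ from Lemma \ref{utminusu0.lem.weak sec}, along the diagonal so that all three tend to $0$ simultaneously.
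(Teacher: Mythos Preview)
Your Step 1 ($u\le\tilde u$) is sound: the Hartogs argument against the continuous majorant $v_{0,m}$ works on compacta, and a barrier handles the strip near $\partial\Omega$.

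The gap is in Step 2 ($\tilde u\le u$). To run Corollary \ref{compa log} for $v_m-u$ on $\Omega\times(\delta_m,T')$ you need $v_m(\cdot,\delta_m)\le u(\cdot,\delta_m)+\text{error}$ on \emph{all} of $\bar\Omega$, not just on a compact $K_m$. Your own lower bound gives $u(\cdot,\delta_m)\ge u_0-C(\delta_m)$, while $v_m(\cdot,\delta_m)$ is uniformly close to $v_{0,m}$; hence the initial gap is essentially $v_{0,m}-u_0$. This quantity is $+\infty$ at every pole of $u_0$, for \emph{every} $m$, so no choice of diagonal $(m,\delta_m,K_m)$ can make the comparison hypothesis hold on the full domain. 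The barrier $\varphi-K_0\rho$ only controls a neighbourhood of $\partial\Omega$; it says nothing near interior singularities of $u_0$, which is exactly where the obstruction lives.

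The paper avoids this by not comparing $u$ with externally constructed approximants at all. Instead it uses the time--translates $u_m:=u(\cdot,t_m+\cdot)$ of $u$ itself. Lemma \ref{utminusu0.lem.weak sec}, applied on $[s,T)$ for $s>0$, gives $u(z,t)\ge u(z,s)-C(t-s)$; choosing $t_m\searrow 0$ with $C(t_m-t_{m+1})\le 2^{-m}$ makes $u(\cdot,t_m)+2^{-m+1}$ a \emph{decreasing} sequence of smooth SPSH functions, and the $L^1$ hypothesis forces the limit to be $u_0$. One then lets $v_m$ be the weak solution with data $(u(\cdot,t_m)+2^{-m+1},\ \varphi(\cdot,t_m+\cdot)+2^{-m+1},\ f(\cdot,t_m+\cdot)+2^{-m+1})$. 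Now the initial data of $u_m$ and $v_m$ differ by exactly $2^{-m+1}$, a \emph{bounded} quantity, so Proposition \ref{compa.prop.weak sec} gives $\sup|u_m-v_m|\to 0$; Proposition \ref{stab.prop.weak sec} gives $v_m\searrow$ (weak solution on $[0,T-t_k)$), and the identification $u=\lim u_m=\lim v_m$ follows. The point is that replacing the abstract approximants $v_{0,m}$ by the smooth functions $u(\cdot,t_m)$ makes the initial-data discrepancy uniformly small, which is precisely what your diagonal scheme cannot achieve.
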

 \begin{proof}
 By Lemma \ref{utminusu0.lem.weak sec}, there exists $t_m\searrow 0$ such that
 \begin{equation}\label{initial.eq.proof L1}
 u(z,t_m+t)\geq u(z,t_{m+1}+t)-2^{-m},\;\;\forall (z,t)\in \bar{\Omega}\times [0,T-t_m).
 \end{equation}
 By the condition "$u(.,t)\stackrel{L^1}{\rightarrow}u_0$", we have 
 $$u(.,t_m)+2^{-m+1}\searrow u_0.$$
 Passing to a subsequence, we can assume that
 $$f(z,t_m+t)+2^{-m+1}\searrow f(z,t),$$
 $$\varphi (z,t_m+t)+2^{-m+1}\searrow \varphi (z,t).$$
 Fix $k>0$. 
 For any $m>k$, $u_m=u(.,t_m+.)\in C^{\infty}(\bar{\Omega})\times [0,T-t_k))$ is the solution of equation
 \begin{equation}
 \begin{cases}
 \begin{array}{ll}
 \dot{u}_m=\log\det(u_m)_{\alpha\bar{\beta}}-Au_m+f(z,t_m+t)\;\;&\forall (z,t)\in\Omega\times (0,T-t_k),\\
 u_m(z,0)=u(z,t_m),&\forall z\in\bar{\Omega},\\
 u_m(z,t)=u(z,t_m+t),&\forall (z,t)\in\partial\Omega\times [0,T-t_k).
 \end{array}
 \end{cases}
 \end{equation}
 Let $v_m\in USC(\bar{\Omega}\times [0,T-t_k))$ be the weak solution of equation
 \begin{equation}
  \begin{cases}
  \begin{array}{ll}
  \dot{v}_m=\log\det(v_m)_{\alpha\bar{\beta}}-Av_m+f(z,t_m+t)+2^{-m+1}
  &\mbox{ on }\Omega\times (0,T-t_k),\\
  v_m(z,0)=u(z,t_m)+2^{-m+1},&\mbox{ on } \bar{\Omega},\\
  v_m(z,t)=\varphi (z,t_m+t)+2^{-m+1},&\mbox{ on }\partial\Omega\times [0,T-t_k).
  \end{array}
  \end{cases}
  \end{equation}
  Applying Proposition \ref{stab.prop.weak sec}, we have 
  $$v_m\searrow v,$$
  where $v$ is the weak solution of  \eqref{KRF} on $\bar{\Omega}\times [0,T-t_k)$.
  
  Applying Proposition \ref{compa.prop.weak sec}, we have 
  $$\sup\limits_{\bar{\Omega}\times [0,T-t_k)} |u_m-v_m|\stackrel{m\rightarrow 0}{\longrightarrow} 0.$$
  Then
  $$u(z,t)=\lim u_m(z,t)=\lim v_m(z,t)=v(z,t),\;\;\forall (z,t)\in\bar{\Omega}\times [0,T-t_k).$$
  Hence $u$ is the weak solution of \eqref{KRF} on $\bar{\Omega}\times [0,T-t_k)$.
  
  When $k\rightarrow\infty$, 
  we have $u$ is the weak solution of \eqref{KRF} on $\bar{\Omega}\times [0,T)$.
 \end{proof}
 \begin{Prop}\label{singular.prop.weak sec}
 If there is $a\in\Omega$ such that $\nu_{u_0}(a)>0$, then the weak solution $u$ of \eqref{KRF} satisfies $u(a,t)=-\infty$ for $t\in [0,\epsilon_A(\nu_{u_0}(a)))$.
 \end{Prop}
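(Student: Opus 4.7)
The plan is to bound $u(a,t)$ from above by an explicit smooth singular supersolution whose value at $a$ tends to $-\infty$ on the claimed time interval, and then pass to the limit using Proposition~\ref{stab.prop.weak sec}. Write $\nu := \nu_{u_0}(a) > 0$ and fix any $\nu' \in (0,\nu)$ and $\tau \in (0,\epsilon_A(\nu'))$. The definition of the Lelong number yields a constant $C_0$ such that $u_0(z) \leq \nu'\log|z-a| + C_0$ for every $z \in \bar\Omega$. Once I prove $u(a,t) = -\infty$ on $[0,\tau]$, letting $\tau \nearrow \epsilon_A(\nu')$ and then $\nu' \nearrow \nu$ finishes the argument.

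For $\epsilon \in (0,1)$ I would use the smooth strictly plurisubharmonic barrier
\[
\Phi_\epsilon(z,t) \;:=\; \tfrac{1}{2}\,k_A(\nu',t)\,\log\bigl(|z-a|^2+\epsilon^2\bigr)+\beta(t),
\]
with $\beta \in C^{\infty}([0,\tau])$ to be determined. Diagonalizing $(\Phi_\epsilon)_{\alpha\bar\beta}$ in a unitary frame aligned along $\overline{z-a}$ gives eigenvalues $k_A/(2(r^2+\epsilon^2))$ with multiplicity $n-1$ and $k_A\epsilon^2/(2(r^2+\epsilon^2)^2)$ with multiplicity one (where $r=|z-a|$), so $\det(\Phi_\epsilon)_{\alpha\bar\beta} = (k_A/2)^n\epsilon^2(r^2+\epsilon^2)^{-(n+1)}$. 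Inserting this into the supersolution inequality $\dot\Phi_\epsilon \geq \log\det(\Phi_\epsilon)_{\alpha\bar\beta} - A\Phi_\epsilon + f$ and using the ODE $\dot k_A = -2n - Ak_A$ to treat the $\log(r^2+\epsilon^2)$ contribution, the inequality reduces to
\[
\log\!\bigl(1+|z-a|^2/\epsilon^2\bigr) + \dot\beta(t) + A\beta(t) - n\log\bigl(k_A(\nu',t)/2\bigr) - f(z,t) \;\geq\; 0.
\]
As the first term is non-negative and the remainder is bounded on $[0,\tau]$ (because $k_A(\nu',\cdot)$ stays bounded away from zero there), such a $\beta$ exists: solve an appropriate linear ODE and enlarge its constant of integration.

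Next I would apply Proposition~\ref{stab.prop.weak sec} to a sequence of smooth strictly plurisubharmonic functions $v_{0,m} \searrow u_0$ obtained by mollifying $u_0 + m^{-1}|z|^2$ at a scale $\epsilon_m \searrow 0$, together with compatible smooth data $\psi_m \searrow \varphi$ and $g_m = f$, so that the classical Hou--Li solutions $v_m$ of the regularized problems decrease to $u$. The Lelong bound $u_0 \leq \nu'\log|z-a|+C_0$ survives the convolution as $v_{0,m}(z) \leq \tfrac{\nu'}{2}\log(|z-a|^2+\epsilon_m^2)+C$ uniformly in $m$ (using $\log(|w|+\epsilon_m) \leq \tfrac{1}{2}\log(|w|^2+\epsilon_m^2)+\tfrac{1}{2}\log 2$), and $\Phi_{\epsilon_m}$ stays bounded on $\partial\Omega\times[0,\tau]$ because $a\in\Omega$; enlarging $\beta(0)$ if necessary makes $v_m \leq \Phi_{\epsilon_m}$ hold on the parabolic boundary, and Corollary~\ref{compa log} propagates this inequality to all of $\bar\Omega\times[0,\tau]$. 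Evaluating at $z=a$ gives $v_m(a,t) \leq k_A(\nu',t)\log\epsilon_m + \beta(t)$, which tends to $-\infty$ as $m\to\infty$ at every $t \in (0,\tau]$ since $k_A(\nu',t)>0$. Passing to the limit produces $u(a,t) = \lim v_m(a,t) = -\infty$ on $(0,\tau]$, while the case $t = 0$ is immediate from $u_0(a) = -\infty$.

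The main subtlety is locating the correct decay rate of the coefficient of $\log(|z-a|^2+\epsilon^2)$ in $\Phi_\epsilon$ so that the barrier survives right up to the time $\epsilon_A(\nu')$. Insisting naively that this coefficient vanish in the supersolution inequality yields $\dot k_A = -2(n+1)-Ak_A$ and only the strictly smaller time $\nu'/(2(n+1))$ for $A=0$. The correct rate is produced by instead letting the coefficient equal $+1$: the residue $\log(|z-a|^2+\epsilon^2)$ then combines with the $-2\log\epsilon$ quietly sitting inside $\log\det(\Phi_\epsilon)_{\alpha\bar\beta} = n\log(k_A/2)+2\log\epsilon-(n+1)\log(|z-a|^2+\epsilon^2)$ to give the non-negative $\log(1+|z-a|^2/\epsilon^2)$, forcing $\dot k_A = -2n-Ak_A$ and driving the coefficient to zero exactly at $t = \epsilon_A(\nu')$.
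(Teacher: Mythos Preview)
Your argument is correct and reaches the same conclusion, but it is organized differently from the paper's proof, and one of your steps is more elaborate than necessary.

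The paper regularizes $\log|z-a|$ by a cutoff, setting $w_m(z)=\chi(\log|z-a|+m)-m$ with $\chi$ a smooth convex function equal to $0$ on $(-\infty,-1)$ and to the identity on $(1,\infty)$, and takes as barrier $v_m(z,t)=k_A(\nu,t)\,w_m(z)+|z|^2+B(t+1)$. The extra $|z|^2$ is needed because $w_m$ is flat near $a$, and the supersolution inequality is checked by splitting into the two regions $|z-a|<e^{-m-1}$ and $|z-a|\ge e^{-m-1}$. Since $w_m\ge\log|z-a|-1$ for every $m$, the barrier dominates $u_0$ directly, and the paper simply invokes Proposition~\ref{compa.prop.weak sec} to get $u\le v_m$, then sends $m\to\infty$. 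Your regularization $\tfrac12\log(|z-a|^2+\epsilon^2)$ is already strictly plurisubharmonic, so no $|z|^2$ is needed and the supersolution check becomes a single clean eigenvalue computation; your discussion of why the coefficient must solve $\dot k_A=-2n-Ak_A$ is a nice byproduct of this explicitness.

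Where you work harder than necessary is in passing through the approximating classical solutions $v_m$ with mollification scale matched to the barrier scale $\epsilon_m$. This is not needed: since $\tfrac12\log(|z-a|^2+\epsilon^2)\ge\log|z-a|$, your barrier already satisfies $\Phi_\epsilon(\cdot,0)\ge \nu'\log|z-a|+\beta(0)\ge u_0$ for $\beta(0)$ large and \emph{every} $\epsilon>0$, so Proposition~\ref{compa.prop.weak sec} (applied, as in the paper, with the smooth supersolution in the role of $v$) gives $u\le\Phi_\epsilon$ directly; letting $\epsilon\to0$ then yields $u(a,t)=-\infty$. This also sidesteps the small imprecision in your appeal to Proposition~\ref{stab.prop.weak sec}: that proposition requires the boundary data $\psi_m$ to decrease monotonically to $\varphi$, which is not automatic for data compatible with generic mollifications $v_{0,m}$ (the paper handles this in Proposition~\ref{exist unique.prop.sec weak} only after passing to a subsequence and invoking Lemma~\ref{weaker condition.lem.sec weak}). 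Your argument survives this, but the direct comparison is cleaner.
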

 \begin{proof}
  We need to show that $u(a,t)=-\infty$ for $t\in [0,\epsilon_A(\nu))$ when $0<\nu<\nu_{u_0}(a)$,
  so that we have
   \begin{equation}\label{1.proof nonexist. sec weak}
 \begin{cases}
 u_0\leq \nu\log|z-a|+B_1,\;\;\;\forall z\in \bar{\Omega},\\
 \varphi\leq \nu\log|z-a|+B_1,\;\;\;\forall (z,t)\in\partial\Omega\times [0,T),
 \end{cases}
 \end{equation}
 where $B_1>0$ is given. 
 
 Let $\chi: \R\rightarrow \R_{+}$ be a smooth increasing convex function such that $\chi|_{(-\infty,-1)}=0$,
 $\chi|_{(1,\infty)}=Id$. For any $m$, we denote
 $$w_m(z)=\chi(\log|z-a|+m)-m.$$
 We will show that there exists $B>0$ such that
 $$v_m(z,t)=g(t)w_m(z)+|z|^2+B(t+1)\geq u(z,t),$$
 for any $m>0$ and $(z,t)\in\bar{\Omega}\times [0,\epsilon_A(\nu))$. Here $g(t)=k_A(\nu,t)$ 
 as in \eqref{kA.eq}.
 %\begin{equation}
% \begin{cases}
% \begin{array}{ll}
% g(t)=\nu-2nt \;\;&\mbox{ if } A=0,\\
% g(t)=-\frac{2n}{A}+(\frac{2n}{A}+\nu)e^{-At}\;\;&\mbox{ if }A>0.
% \end{array}
% \end{cases}
% \end{equation}

 It is easy to show that
 \begin{center}
 $\dot{v}_m(z,t)+Av_m(z,t)\geq -2n w_m(z)+B,\;\;\;\forall (z,t)\in \bar{\Omega}\times [0,\epsilon_A(\nu))$.
 \end{center}
 When $|z-a|< e^{-m-1}$, we have
 $w_m=-m, |D^2w_m|=0.$ Then 
 \begin{equation}\label{2.proof nonexist. sec weak}
 \dot{v}_m-\log\det (v_m)_{\alpha\bar{\beta}}+Av_m-f(z,t)\geq 2mn+B-f(z,t)
 \end{equation}
 When $|z-a|\geq e^{-m-1}$, we have
 \begin{flushleft}
 $w_m\leq \log |z-a|+2,$\\
 $|(w_m)_{\alpha\bar{\beta}}|\leq \frac{B_2}{|z-a|^2},$
 \end{flushleft}
 where $B_2>0$ is independent of $m$. Then
 $$\log\det (v_m)_{\alpha\bar{\beta}}=\log\det(g(t)w_m+|z|^2)_{\alpha\bar{\beta}}\leq \log\frac{B_3}{|z-a|^{2n}},$$
 where $B_3>0$ is independent of $m$.
 
 Hence
 \begin{equation}\label{3.proof nonexist. sec weak}
 \dot{v}_m-\log\det (v_m)_{\alpha\bar{\beta}}+Av_m-f(z,t)\geq -4n-\log B_3+B-f(z,t).
 \end{equation}
 By \eqref{1.proof nonexist. sec weak}, \eqref{2.proof nonexist. sec weak} and
 \eqref{3.proof nonexist. sec weak}, there exists $B>0$ such that, for any $m$,
 \begin{equation}
 \begin{cases}
 \begin{array}{ll}
\dot{v}_m-\log\det (v_m)_{\alpha\bar{\beta}}+Av_m-f(z,t)\geq 0 &\forall (z,t)\in
\Omega\times (0,\epsilon_A(\nu)),\\
v_m(z,0)\geq u_0(z),&\forall z\in\bar{\Omega},\\
v_m(z,t)\geq \varphi (z,t),&\forall (z,t)\in\partial\Omega\times [0,\epsilon_A(\nu)).
 \end{array}
 \end{cases}
 \end{equation}
 Applying Proposition \ref{compa.prop.weak sec}, we have
 \begin{center}
 $v_m\geq u$ on $\bar{\Omega}\times [0,\epsilon_A(\nu))$.
 \end{center}
 When $m\rightarrow\infty$, we obtain
 \begin{center}
 $g(t)\log|z-a|+|z|^2+B(t+1)\geq u(z,t),\;\;\forall (z,t)\in\bar{\Omega}\times [0,\epsilon_A(\nu)).$
 \end{center}
 In particular, $u(a,t)=-\infty$ for $t\in [0,\epsilon_A(\nu))$.
 \end{proof}
 \section{Proof of Theorem \ref{main.zeroLelong}}
 \subsection{Bounds on $\mathbf{\dot{u}}$}
  \begin{Lem}\label{dotu.lem}
  Assume that $u\in C^{\infty}(\bar{\Omega}\times [0,T))$ satisfies
  \begin{equation}
   \begin{cases}
   \dot{u}=\log\det (u_{\alpha\bar{\beta}})-Au+g(z,t)\;\;\mbox{on}\;\; \Omega\times (0,T),\\
   u=\varphi\;\;\;\mbox{on}\;\; \partial\Omega\times [0,T).
   \end{cases}
   \end{equation}
   Then the following hold\\
   
  (i) If $A=0$ then 
  $$\dfrac{u(z,t)-\sup u_0}{t}-B\leq \dot{u}(z,t)\leq \dfrac{u(z,t)-u_0(z)}{t}+B, \;\;\forall 
  (z,t)\in\bar{\Omega}\times [0,T),$$
  where $B= 2\sup |\dot{\varphi}|+T\sup |\dot{g}|+n$ and $u_0=u(., 0)$.\\
  
  (ii) If $A>0$ then 
  $$\dfrac{A}{e^{At}-1}(u-e^{At}\sup (u_0)_{+})-B
   \leq \dot{u}\leq \dfrac{A}{e^{At}-1} (u-u_0)+B,$$
   where $B= 2\sup |\dot{\varphi}|+T\sup |\dot{g}|+n$ and $u_0=u(., 0)$ and $(u_0)_{+}=\max\{u_0, 0\}$.
  \end{Lem}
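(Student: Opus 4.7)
The plan is to differentiate the PDE in $t$ and apply the maximum principle to a carefully chosen test function. Since $u$ is smooth, differentiating the equation $\dot u = \log\det u_{\alpha\bar\beta} - Au + g$ in $t$ yields
$$\ddot u = u^{\alpha\bar\beta}\dot u_{\alpha\bar\beta} - A\dot u + \dot g,\qquad\text{i.e.}\qquad (\partial_t - \Delta_u + A)\dot u = \dot g,$$
where $\Delta_u f := \sum u^{\alpha\bar\beta} f_{\alpha\bar\beta}$ denotes the Laplacian with respect to the positive definite form $u_{\alpha\bar\beta}$. Two further identities will do most of the work: $\Delta_u u = n$, and, crucially, $\Delta_u u_0 = u^{\alpha\bar\beta}(u_0)_{\alpha\bar\beta} \geq 0$ because $u_0 = u(\cdot,0)$ is plurisubharmonic and $u^{\alpha\bar\beta}$ is positive definite.

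For case (i), to obtain the upper bound I would set $H(z,t) := t\dot u - (u - u_0) - Bt$, with $B := 2\sup|\dot\varphi| + T\sup|\dot g| + n$. A direct computation gives $(\partial_t - \Delta_u)H = t\dot g + n - \Delta_u u_0 - B$, which is $\leq T\sup|\dot g| + n - B \leq 0$ by the sign of $\Delta_u u_0$. On $\{t=0\}$ one has $H \equiv 0$, while on $\partial\Omega \times [0,T)$ the bound $|\varphi(z,t) - \varphi(z,0)| \leq t\sup|\dot\varphi|$ gives $H \leq t(2\sup|\dot\varphi| - B) \leq 0$. Theorem \ref{max prin} then forces $H \leq 0$ throughout $\bar\Omega\times [0,T)$, which rearranges to the desired upper bound. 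The lower bound is treated symmetrically with $H' := -t\dot u + (u - \sup u_0) - Bt$; it is actually cleaner because $\sup u_0$ is a constant and no $\Delta_u u_0$ term enters.

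For case (ii), the same scheme works with $t$ replaced by the ODE-weight $1 - e^{-At}$: for the upper bound I would use
$$H := (1-e^{-At})\dot u - A(u - u_0)e^{-At} - B(1-e^{-At}),$$
and for the lower bound $H' := -(1-e^{-At})\dot u + A u e^{-At} - A\sup(u_0)_+ - B(1-e^{-At})$. The analogous calculation, together with the definition of $H$, produces $(\partial_t - \Delta_u + A)H = -AB + (1-e^{-At})\dot g + Ane^{-At} - A\Delta_u u_0 \cdot e^{-At}$, which is $\leq 0$ once $B \geq T\sup|\dot g| + n$, thanks to the elementary inequalities $1 - e^{-At} \leq At$ and $e^{-At} \leq 1$ and the sign of $\Delta_u u_0$. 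Because the zeroth-order coefficient $b = -A$ is negative, Corollary \ref{compa lap} applies and reduces us to bounding $H$ on the parabolic boundary.

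The only real technical point, and the reason for the specific shape of the test function in case (ii), is securing a constant $B$ that is independent of $A$. This hinges on the $e^{-At}$ weight in front of $A(u - u_0)$ together with the inequality $At e^{-At} \leq 1 - e^{-At}$ (which follows from $\int_0^t A e^{-As}\,ds \geq A t e^{-At}$); the latter lets one absorb the boundary contribution $|A(\varphi(z,t) - \varphi(z,0))e^{-At}| \leq A t e^{-At}\sup|\dot\varphi|$ into $(1-e^{-At})\sup|\dot\varphi|$ and yields the clean cutoff $B \geq 2\sup|\dot\varphi|$ on $\partial\Omega$. Apart from this bookkeeping the argument is a routine linear maximum-principle application to the differentiated equation.
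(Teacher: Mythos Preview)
Your proposal is correct and follows essentially the same strategy as the paper: differentiate the equation in $t$, form the linear operator $L=\partial_t-\Delta_u+A$, and apply the maximum principle (Theorem \ref{max prin} for $A=0$, Corollary \ref{compa lap} for $A>0$) to the same test functions $t\dot u-(u-u_0)$ and $\frac{1-e^{-At}}{A}\dot u-e^{-At}(u-u_0)$ (up to the overall factor $A$). The only difference is cosmetic: in case (ii) you subtract $B(1-e^{-At})$ rather than the paper's $Bt$, which spares you the extra factor $\frac{At}{1-e^{-At}}$ that the paper has to remove in its final ``Thus'' step.
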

  \begin{proof}
   We denote by $L$ the operator
  $$L(\phi)=\dot{\phi}-\sum u^{\alpha\bar{\beta}}\phi_{\alpha\bar{\beta}}+A\phi,\;\;\forall
  \phi\in C^{\infty}(\bar{\Omega}\times [0,T)),$$
  where $(u^{\alpha\bar{\beta}})$ is the transpose of the
  inverse of the Hessian matrix  $(u_{\alpha\bar{\beta}})$.\\
  
  (i) When $A=0$, we have, for any $B>0$,
  \begin{flushleft}
  $\begin{array}{ll}
  L(t\dot{u}-u+u_0-Bt)&=t\ddot{u}+\dot{u}-\dot{u}-B-
  t\sum u^{\alpha\bar{\beta}}\dot{u}_{\alpha\bar{\beta}}
  +\sum u^{\alpha\bar{\beta}}u_{\alpha\bar{\beta}}
  -\sum u^{\alpha\bar{\beta}}(u_0)_{\alpha\bar{\beta}}\\
  &=tg_t-B+n-\sum u^{\alpha\bar{\beta}}(u_0)_{\alpha\bar{\beta}}\\
  &\leq T\sup |\dot{g}|-B+n.\\
  L(t\dot{u}-u+Bt)&=t\ddot{u}+\dot{u}-\dot{u}+B-t\sum u^{\alpha\bar{\beta}}\dot{u}_{\alpha\bar{\beta}}
  +\sum u^{\alpha\bar{\beta}}u_{\alpha\bar{\beta}}\\
  &=B+n+t\dot{g}\\
  &\leq B+n-T\sup |\dot{g}|.
  \end{array}$
  \end{flushleft}
  Then, for any $B\geq T\sup |\dot{g}|+n$, we have 
  \begin{center}
  $L(t\dot{u}-u+u_0-Bt)\leq 0,$\\
  $L(t\dot{u}-u+Bt)\geq 0.$
  \end{center}
   It follows from Corollary \ref{compa lap} that, for any $B\geq 2\sup |\dot{\varphi}|
   +T\sup |\dot{g}|+n$,
   \begin{center}
  $t\dot{u}-u+u_0-Bt\leq \sup_{\partial_P(\Omega\times [0,T))}(t\dot{u}-u+u_0-Bt)\leq 0.$
  $t\dot{u}-u+Bt\geq \inf_{\partial_P(\Omega\times [0,T))}(t\dot{u}-u+Bt)\geq -\sup u_0.$
  \end{center}
  Hence, if $B= 2\sup |\dot{\varphi}|+T\sup |\dot{g}|+n$ then
  $$\dfrac{u(z,t)-\sup u_0}{t}-B\leq \dot{u}(z,t)\leq \dfrac{u(z,t)-u_0(z)}{t}+B, 
  $$
  for any $(z,t)\in\bar{\Omega}\times [0,T)$.\\
  
  (ii)When $A>0$, we have
  \begin{flushleft}
  $\begin{array}{ll}
  L(\dfrac{1-e^{-At}}{A}\dot{u}-e^{-At}u)
  &=\dfrac{1-e^{-At}}{A}\ddot{u}+e^{-At}\dot{u}-e^{-At}\dot{u}+Ae^{-At}u
  -\dfrac{1-e^{-At}}{A}\sum u^{\alpha\bar{\beta}}\dot{u}_{\alpha\bar{\beta}}\\
  &+e^{-At}\sum u^{\alpha\bar{\beta}}u_{\alpha\bar{\beta}}
  +A(\dfrac{1-e^{-At}}{A}\dot{u}-e^{-At}u)\\
  &=\dfrac{1-e^{-At}}{A}\dot{g}+e^{-At}n,\\
  L(e^{-At}u_0)&=-Ae^{-At}u_0-e^{-At}\sum u^{\alpha\bar{\beta}}(u_0)_{\alpha\bar{\beta}}
  +Ae^{-At}u_0\leq 0,\\
  L(Bt)&=B+ABt,
  \end{array}$
  \end{flushleft}
  where $B>0$.\\
  
  Then, for any $B\geq T\sup |\dot{g}|+n$, we have
  \begin{center}
  $L(\dfrac{1-e^{-At}}{A}\dot{u}-e^{-At}(u-u_0)-Bt)\leq 0,$\\
  $L(\dfrac{1-e^{-At}}{A}\dot{u}-e^{-At}u+Bt)\geq 0.$
  \end{center}
  It follows from Corollary \ref{compa lap} that, for any $B\geq 2\sup |\dot{\varphi}|+
  T\sup |\dot{g}|+n$,
  \begin{flushleft}
  $\dfrac{1-e^{-At}}{A}\dot{u}-e^{-At}(u-u_0)-Bt\leq
  \sup\limits_{\partial_P(\Omega\times [0,T))}(\dfrac{1-e^{-At}}{A}\dot{u}-e^{-At}(u-u_0)-Bt)\leq 0,$\\
  \end{flushleft}
  and
  \begin{flushleft}
  $\begin{array}{ll}
  \dfrac{1-e^{-At}}{A}\dot{u}-e^{-At}u+Bt\geq
  \inf\limits_{\partial_P(\Omega\times [0,T))}(\dfrac{1-e^{-At}}{A}\dot{u}-e^{-At}u+Bt)
  &\geq -\sup(e^{-At}u_0)\\
  &\geq -\sup (u_0)_{+}.
  \end{array}$
  \end{flushleft}
   Hence, if $B= 2\sup |\dot{\varphi}|+T\sup |\dot{g}|+n$ then
   $$\dfrac{A}{e^{At}-1}\left(u-e^{At}\sup (u_0)_{+}\right)-B\dfrac{At}{1-e^{-At}}
   \leq \dot{u}\leq \dfrac{A}{e^{At}-1} (u-u_0)+B\dfrac{At}{1-e^{-At}}.$$
   Thus
   $$\dfrac{A}{e^{At}-1}(u-e^{At}\sup (u_0)_{+})-B
   \leq \dot{u}\leq \dfrac{A}{e^{At}-1} (u-u_0)+B.$$
  \end{proof}
  \subsection{Proof of Theorem \ref{main.zeroLelong}}
  \subsubsection{Smoothness}
  \begin{flushleft}
   As in the proof of Proposition \ref{exist unique.prop.sec weak}, we can construct a sequence of functions
   $u_m\in C^{\infty}(\bar{\Omega}\times [0,T))$ satisfying
   \end{flushleft}
   \begin{equation}
      \begin{cases}
      \begin{array}{ll}
      u_m(.,t)\in SPSH(\Omega),\\
      \dot{u}_m=\log\det (u_m)_{\alpha\bar{\beta}}-Au_m+f(z,t)\;\;\;&\mbox{on}\;\Omega\times (0,T),\\
      u_m=\varphi_m\rightrightarrows\varphi&\mbox{on}\;\partial\Omega\times [0,T),\\
      u_m=\varphi &\mbox{on}\;\partial\Omega\times (\epsilon_m,T),\\
      u_m=u_{0,m}\searrow u_0&\mbox{on}\;\bar{\Omega}\times\{ 0\},\\
      u=\lim\limits_{m\rightarrow \infty} u_m.
      \end{array}
      \end{cases}
      \end{equation}
      where $\epsilon_m\searrow 0$.
      
 $\forall\epsilon\in (0,T), \forall m\gg 1$, $u_m(.,\epsilon)$  verifies
 \begin{equation}
 \begin{cases}
 (dd^cu_m(z,\epsilon))^n=F_m(z)\;\;\mbox{ on }\;\Omega,\\
 u_m(z,\epsilon)=\varphi (z,\epsilon)\;\;\mbox{ on }\;\partial\Omega,
 \end{cases}
 \end{equation}
 where $F_m(z)=\exp (\dot{u}_m(z,\epsilon)+Au_m(z,\epsilon)-f(z,\epsilon))$.
 
 By Lemma \ref{dotu.lem} and Proposition \ref{singular.prop.weak sec}, we have, for $m\gg 1$,
 \begin{center}
 $F_m(z)\leq C_1e^{-\frac{u(z,\epsilon/2)}{\epsilon /2}}\leq C_2e^{-\frac{2u_0(z)}{\epsilon}}$
 \end{center}
 where $C_1, C_2>0$ depend only on $\Omega, \epsilon, T, A, f, \varphi$.
 
 By  Skoda's theorem (see \cite{Sko72}), we have $F_m\in L^p(\bar{\Omega})$ for any $p>1$.
 
 Applying Kolodziej theorem (Theorem B \cite{Kol98}), we have 
 $$\|u_m(.,\epsilon)\|_{L^{\infty}(\bar{\Omega})}\leq C.$$
 where $C$ is independent of $m$.
 
 Hence, the weak solution $u$ satisfies
 $$\|u(.,\epsilon)\|_{L^{\infty(\bar{\Omega})}}\leq C.$$
 By Corollary \ref{unique.cor.sec weak}, Proposition \ref{L1weak.prop.secweak} and
 the case where $u_0$ is bounded (\cite{Do15}), we have $u$ is smooth on $\bar{\Omega}\times (\epsilon,T)$
 and verifies
 \begin{equation}\label{proofzeroLelong.PMA.eq}
 \dot{u}=\log\det (u_{\alpha\bar{\beta}})-Au+f(z,t) 
 \end{equation}
 on $\bar{\Omega}\times (\epsilon,T)$.
 
 When $\epsilon\searrow 0$, we have $u$ is smooth on $\bar{\Omega}\times (0,T)$
  and verifies \eqref{proofzeroLelong.PMA.eq} on  $\bar{\Omega}\times (0,T)$.
  \subsubsection{Continuity at zero}
  \begin{flushleft}
   Applying Lemma \ref{utminusu0.lem.weak sec}, we have
   \end{flushleft}
   \begin{equation}\label{liminf.conti0.proofzero}
   \liminf\limits_{t\to 0}u(z,t)\geq u_0(z),
   \end{equation}
   for any $z\in\bar{\Omega}$.
   
   Note that $u$ is the limit of a decreasing sequence of smooth functions, then $u\in USC(\bar{\Omega}\times [0,T))$. We have
   \begin{equation}\label{limsup.conti0.proofzero}
   \limsup\limits_{t\to 0}u(z,t)\leq u_0(z),
   \end{equation}
   for any $z\in\bar{\Omega}$.
   
   Combining \eqref{liminf.conti0.proofzero} and \eqref{limsup.conti0.proofzero}, we obtain
   $$\lim\limits_{t\to 0}u(z,t)=u_0(z).$$
   
   Hence, by the dominated convergence theorem, $u(., t)\rightarrow u_0$ in $L^1$, as $t\to 0$.
 \section{A priori estimates}
 In this section, we will prove a priori estimates which will 
 be used in the proof of Theorem \ref{main.description}.

 We suppose that $\Omega$ be a bounded smooth strictly pseudoconvex domain of $\C^n$,
  $C_0,A,T, N_1,...,N_l>0$ and
 $a_1,...a_l\in\Omega$. 
 We also suppose that $\varphi, g$ are smooth functions in $\bar{\Omega}\times [0,T]$ and 
 $u_0$ is a plurisubharmonic function in $\Omega$ satisfying
 \begin{equation}\label{condition.g phi.a priori}
 \|g\|_{C^2(\bar{\Omega}\times[0,T])}+\|\varphi\|_{C^4(\bar{\Omega}\times[0,T])}\leq C_0,
 \end{equation}
 \begin{equation}\label{condition.u0.a priori}
 u_0\geq \sum\limits_{j=1}^lN_j\log |z-a_j|-C_0,
 \end{equation}
 where $\|.\|_{C^k(\bar{\Omega}\times[0,T])}$ is defined by
 $$\|\phi\|_{C^k(\bar{\Omega}\times[0,T])}=\sum\limits_{|j_1|+j_2\leq 2}\sup_{\bar{\Omega}\times [0,T]}
 |D_x^{j_1}D_t^{j_2}\phi|,$$
 for any $\phi\in C^{\infty}(\bar{\Omega}\times [0,T])$. 
 
 Throughout this section, unless  specified otherwise, we always assume that $u_0$ is smooth and strictly plurisubharmonic in $\bar{\Omega}$. 
  The main result of this section is following
 \begin{The}\label{the.a priori}
  Assume that  
 $u\in C^{\infty}(\bar{\Omega}\times [0,T))$ satisfies
 \begin{equation}\label{eq.the.a priori}
 \begin{cases}
 \dot{u}=\log\det (u_{\alpha\bar{\beta}})-Au+g(z,t)\;\;\mbox{on}\;\; \Omega\times (0,T),\\
 u=\varphi\;\;\;\mbox{on}\;\; \partial\Omega\times [0,T),\\
 u(.,0)=u_0\;\;\;\mbox{on}\;\;\bar{\Omega}.
 \end{cases}
 \end{equation}
 Then for any $0<\epsilon<T$ and $K\Subset\bar{\Omega}\setminus \{a_1,...,a_l\}$,
  there exists $C_1,C_2>0$ depending on
  $\Omega, T, C_0, N_1,...,N_l, a_1,..., a_l,\epsilon,K$ such that
 \begin{equation}
 |u(z,t)|+|\dot{u}(z,t)|+\Delta u(z,t)\leq C,\;\;\forall (z,t)\in K\times(\epsilon,T).
 \end{equation}
 \end{The}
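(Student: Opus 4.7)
The plan is to establish the three bounds in sequence---first an $L^\infty$ bound on $u$, then a bound on $|\dot u|$, then the Laplacian bound---since each step relies on the previous ones. For the $L^\infty$ bound: since $u_0$ is plurisubharmonic on $\bar\Omega$ with $u_0|_{\partial\Omega}=\varphi(\cdot,0)$ (by the compatibility implicit in \eqref{eq.the.a priori}), the submean value property gives $\sup_{\bar\Omega}u_0\le\sup_{\partial\Omega}\varphi(\cdot,0)\le C_0$; comparison of $u$ with the smooth supersolution $v(z,t)=C_0+|z|^2+Bt$ (for $B$ large, via Corollary~\ref{compa log}) then yields $u\le C$ on $\bar\Omega\times[0,T)$. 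On $K\Subset\bar\Omega\setminus\{a_1,\dots,a_l\}$, the hypothesis $u_0\ge\sum_jN_j\log|z-a_j|-C_0$ gives $u_0\ge -C_K$ for a constant $C_K$ depending only on $\mathrm{dist}(K,\{a_j\})$, the $N_j$, and $C_0$; Lemma~\ref{utminusu0.lem.weak sec} then yields $u(z,t)\ge u_0(z)-C(T)\ge -C$ on $K\times[0,T)$.

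For the bound on $|\dot u|$, I apply Lemma~\ref{dotu.lem}, whose two-sided bound on $\dot u$ involves $u$, $u_0$, and the factor $A/(e^{At}-1)$. For $t\ge\epsilon>0$ this factor is bounded; $u$ is bounded on $K\times[\epsilon,T)$ by Step~1; $u_0\le C_0$ globally and $u_0\ge -C_K$ on $K$. Together these give $|\dot u|\le C$ on $K\times(\epsilon,T)$, and hence $|\log\det u_{\alpha\bar\beta}|=|\dot u+Au-g|\le C$ there.

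The third and main step is the Laplacian bound. A standard parabolic computation, using Theorem~\ref{lap 2} with $\omega=dd^c|z|^2$ (whose bisectional curvature vanishes) and $Ric(\omega')=-dd^c\log\det u_{\alpha\bar\beta}$, yields
$$L(\log\Delta u)\le -A+\frac{\Delta g}{\Delta u},\qquad L:=\partial_t-\sum u^{\alpha\bar\beta}\partial_\alpha\partial_{\bar\beta}.$$
Since $A>0$, this inequality forces $\Delta u\le\|\Delta g\|_\infty/A$ at any interior maximum of $\log\Delta u$. To produce a local bound on $K\times(\epsilon,T)$, introduce a spatial cutoff $\chi\in C_c^\infty(\Omega\setminus\{a_j\})$ equal to $1$ on a neighborhood of $K$ and a time cutoff $\tau\in C^\infty([0,T))$ with $\tau(0)=0$ and $\tau\equiv 1$ on $[\epsilon,T)$, and apply the parabolic maximum principle to $H=\tau\chi^2\log\Delta u+\alpha|z|^2$ for a suitably large $\alpha>0$. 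Since $H$ vanishes on $\partial(\{\chi>0\})\times[0,T')$ and at $t=0$, its max is attained at an interior point where $LH\ge0$; absorbing the cross-terms then produces the desired bound $\Delta u\le C$ on $K\times(\epsilon,T)$.

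The main obstacle is closing the maximum-principle argument in Step~3: the cross-terms that arise from differentiating $\chi$ against the metric $u^{\alpha\bar\beta}$ are not a priori controlled (the inverse Hessian has no upper bound yet), so they must be dominated using Cauchy--Schwarz together with the negative contribution $-\alpha\,\mathrm{tr}_{\omega'}(\omega)$ produced by $L(\alpha|z|^2)$ and the bounds on $\dot u$ and $\log\det u_{\alpha\bar\beta}$ from Steps 1--2. The calibration of the constants $\alpha$, the gradient bounds on $\chi$ and $\tau$, and the weights $N_j$ (which enter through $C_K$) is the delicate point; this is essentially a localized parabolic version of Yau's Laplacian estimate adapted to the singularities of $u_0$ at the points $a_j$.
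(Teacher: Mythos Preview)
Your Steps~1 and~2 (the $L^\infty$ bound on $u$ and the bound on $|\dot u|$ via Lemmas~\ref{utminusu0.lem.weak sec} and~\ref{dotu.lem}) are correct and essentially coincide with the paper's treatment.

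The gap is in Step~3. When you apply the maximum principle to $H=\tau\chi^2\log\Delta u+\alpha|z|^2$, the critical-point condition $\nabla H=0$ gives $\tau\chi^2(\log\Delta u)_{\bar\beta}=-2\tau\chi\chi_{\bar\beta}\log\Delta u-\alpha z_\beta$, and substituting this into the cross-term $-2\tau\,\mathrm{Re}\bigl(u^{\alpha\bar\beta}(\chi^2)_\alpha(\log\Delta u)_{\bar\beta}\bigr)$ produces a contribution of the form $C\,\tau\,(\log\Delta u)\,u^{\alpha\bar\beta}\chi_\alpha\chi_{\bar\beta}$, i.e.\ of order $(\log\Delta u)\sum u^{\alpha\bar\alpha}$. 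The only negative term you have available is $-\alpha\sum u^{\alpha\bar\alpha}$ from $L(\alpha|z|^2)$, and this cannot dominate $(\log\Delta u)\sum u^{\alpha\bar\alpha}$ for any fixed $\alpha$, since $\log\Delta u$ is precisely the unbounded quantity. Using the bound on $\log\det u_{\alpha\bar\beta}$ and Theorem~\ref{lap 1} only converts this into $S\log S$ versus $-\alpha S$ with $S=\sum u^{\alpha\bar\alpha}$, which still fails. This is the well-known obstruction to purely local second-order estimates for complex Monge--Amp\`ere, and your sketch does not overcome it. (A secondary issue: you rely on $A>0$ to force $\Delta u\le\|\Delta g\|_\infty/A$ at an interior maximum of $\log\Delta u$; the theorem should also cover $A=0$.)

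The paper's route is genuinely different and avoids localization in the Laplacian step. It first proves gradient estimates: on $\partial\Omega\times(\epsilon,T)$ by barriers (Lemma~\ref{gradient boundary. lem}), and on $K\times(2\epsilon,T)$ by a B\l ocki-type argument adapted to the logarithmic poles, using auxiliary holomorphic functions $f_j$ vanishing at the $a_j$ (Lemma~\ref{gradient interior.lem}). These feed into a second-order \emph{boundary} estimate (Lemma~\ref{2 order boundary}), which then allows a \emph{global} maximum principle on $\bar\Omega\times[\epsilon,T')$ for the auxiliary function $\phi=t\log\Delta u-u+u_\epsilon-B_3(t-\epsilon)+|z|^2$: the boundary values of $\phi$ are controlled by the boundary $D^2u$ bound, and the interior argument closes because the terms $\log\Delta u-\dot u$ combine via $\log\det u_{\alpha\bar\beta}=\dot u+Au-g$ and Theorem~\ref{lap 1} into $(n-1)\log\sum u^{\alpha\bar\alpha}-\sum u^{\alpha\bar\alpha}$, which is bounded above. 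The gradient step is thus not optional in the paper's scheme---it is what makes the Laplacian estimate close without a cutoff.
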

 By Lemma \ref{utminusu0.lem.weak sec}, we have
 $$\sup\varphi\geq u(z,t)\geq u_0(z)-C(t)\geq \sum\limits_{j=1}^lN_j\log |z-a_j|-C_0-C(t).$$
 By Lemma \ref{dotu.lem}, we also have the bounds of $\dot{u}$. Then it remains to estimate  $\Delta u$. 
 \subsection{Bounds on $\mathbf{\nabla u}$}
 \begin{flushleft}
Let $0<\epsilon<T$. We need to estimate $\nabla u$ near $\partial\Omega\times (\epsilon, T)$ in order to bound 
 $\Delta u$ on $\partial\Omega\times (\epsilon, T)$.
 \end{flushleft}
 \begin{Lem}\label{gradient boundary. lem}
 For any $T>\epsilon>0$, there exists $B>0$ depending only on $n,\Omega, T, C_0, a_1,...,a_l$, $N_1,..., N_l,
  \epsilon$ such that
 $$\sup_{\partial\Omega\times (\epsilon,T)}|\nabla u|\leq B.$$
 \end{Lem}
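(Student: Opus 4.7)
The plan is to decompose $\nabla u$ at a boundary point $z_0\in\partial\Omega$ into tangential and normal parts. Tangential derivatives agree with those of $\varphi$ by the Dirichlet condition $u=\varphi$ on $\partial\Omega\times[0,T)$, and are therefore bounded by $\|\varphi\|_{C^1}\leq C_0$. Only the real normal derivative $\partial_\nu u$ needs work, and it will be squeezed between a subbarrier and a supersolution. As preparation, fix $\delta>0$ so small that the tubular collar $V:=\{z\in\bar\Omega:\rho(z)>-\delta\}$ is disjoint from $\{a_1,\ldots,a_l\}$. On $\bar V$ the expression $\sum N_j\log|z-a_j|$ is bounded below, so \eqref{condition.u0.a priori} yields $u_0\geq -C_1$ there, and Lemma \ref{utminusu0.lem.weak sec} propagates this to $u(z,t)\geq -C_1-C(T)$ on $V\times[0,T)$. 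Combining with the plurisubharmonic maximum principle $u(\cdot,t)\leq\sup_{\partial\Omega}\varphi\leq C_0$ gives $|u|\leq C_2$ on $V\times[0,T)$. Lemma \ref{dotu.lem} then delivers $|\dot u|\leq C_3$ on $V\times[\epsilon,T)$, since its right-hand sides depend on $u$, $u_0$, and $\sup u_0$, all under pointwise control on $V$.

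For the upper bound on $\partial_\nu u$, fix $t_0\in[\epsilon,T)$ and regard $w:=u(\cdot,t_0)$ as satisfying the elliptic equation $\log\det w_{\alpha\bar\beta}=F(z)$ with $F(z):=\dot u(z,t_0)+Au(z,t_0)-g(z,t_0)$ bounded on $V$ by $C_4$. Try the subbarrier
\[
\underline v(z):=\varphi(z,t_0)+K\rho(z).
\]
Since $dd^c\rho\geq dd^c|z|^2$, for $K$ sufficiently large (depending on $C_0,C_4,n,\Omega$) the function $\underline v$ is strictly plurisubharmonic and $\det\underline v_{\alpha\bar\beta}>e^F=\det w_{\alpha\bar\beta}$ on $V$. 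Enlarging $K$ further so that $\underline v|_{\{\rho=-\delta\}}=\varphi(\cdot,t_0)-K\delta\leq -C_2\leq w$, one has $\underline v\leq w$ on $\partial V$ with equality on $\partial\Omega$. A positive interior maximum of $\underline v-w$ is impossible: at such a point the Hessian inequality $(\underline v)_{\alpha\bar\beta}\leq w_{\alpha\bar\beta}$ forces $\det\underline v_{\alpha\bar\beta}\leq\det w_{\alpha\bar\beta}$, contradicting the strict Monge--Amp\`ere inequality. Hence $\underline v\leq w$ in $V$, and the inward-normal comparison at $z_0\in\partial\Omega$ yields
\[
\partial_\nu u(z_0,t_0)\leq\partial_\nu\underline v(z_0)=\partial_\nu\varphi(z_0,t_0)+K\,\partial_\nu\rho(z_0),
\]
bounded by the admissible data.

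For the matching lower bound, let $h_{t_0}$ be the harmonic extension of $\varphi(\cdot,t_0)$ to $\Omega$; since $w$ is subharmonic with the same boundary data, $w\leq h_{t_0}$ on $\bar\Omega$ with equality on $\partial\Omega$, and elliptic regularity gives $\|h_{t_0}\|_{C^1(\bar\Omega)}\leq C_5$. Thus $\partial_\nu u(z_0,t_0)\geq\partial_\nu h_{t_0}(z_0)\geq -C_5$, which combined with the tangential bound completes the required estimate $|\nabla u|\leq B$ on $\partial\Omega\times(\epsilon,T)$. The delicate step is the uniform choice of $K$ for all $t_0\in[\epsilon,T)$: it requires that $F$ be bounded on $V$ by a constant independent of $\|u_0\|_{L^\infty(\Omega)}$, and this holds only because $V$ is chosen disjoint from the logarithmic pole locus $\{a_j\}$ and the structural hypothesis \eqref{condition.u0.a priori} gives pointwise control of $u_0$, hence of $u$ and $\dot u$, on $V$.
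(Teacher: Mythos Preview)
Your argument is correct in substance, though you have the labels reversed: with $\nu$ the \emph{inward} normal and $\underline v\leq w$ on $V$ with equality on $\partial\Omega$, one gets $\partial_\nu u\geq\partial_\nu\underline v$, not $\leq$; likewise $w\leq h_{t_0}$ yields $\partial_\nu u\leq\partial_\nu h_{t_0}$. The two barriers still sandwich $\partial_\nu u$, so the conclusion is unaffected.

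Your route, however, is genuinely different from the paper's. The paper builds a single \emph{parabolic} subsolution $\underline u(z,t)$ on all of $\bar\Omega\times[0,T)$ (incorporating the logarithmic poles through a global function $\underline u_0\leq u_0$ that equals $\sum N_j\log|z-a_j|-(1+\epsilon)C_0$ near $\partial\Omega$), verifies $\dot{\underline u}\leq\log\det\underline u_{\alpha\bar\beta}-A\underline u+g$ directly, and applies the parabolic comparison principle (Corollary~\ref{compa log}) once, without ever bounding $\dot u$ in advance. You instead localize to a collar $V$ disjoint from the poles, invoke Lemma~\ref{utminusu0.lem.weak sec} and Lemma~\ref{dotu.lem} to control $u$ and $\dot u$ pointwise on $V\times[\epsilon,T)$, and then freeze each time $t_0$ to reduce to an \emph{elliptic} Monge--Amp\`ere barrier problem on $V$ with bounded right-hand side. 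Your approach is more modular (it recycles the $\dot u$ estimate and reduces to the classical CKNS-type elliptic barrier), while the paper's is more self-contained at this step and avoids the dependence on Lemma~\ref{dotu.lem}; in exchange the paper must design a global barrier that accommodates the singularities of $u_0$.
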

 \begin{proof}
 For any $(z,t)\in\partial\Omega\times (0,T), \xi\in T_{\R,z}\partial\Omega$, we have 
 $$|u_{\xi}(z,t)|=|\varphi_{\xi}(z,t)|\leq C_0.$$
 It remains to show that , for any $(z,t)\in \partial\Omega\times (\epsilon,T)$,
 $$|u_{\eta}(z,t)|\leq B,$$ 
 where $\eta$ is an interior normal vector of $\partial\Omega$ at $z$, $\|\eta\|=1$.\\
 We need to construct functions $\U , h\in C^{\infty}(\bar{\Omega}\times [0,T))$ such that
 \begin{center}
 $\U\leq u\leq h$ on $\bar{\Omega}\times [0,T)$,\\
 $\U=\varphi=h$ on $\partial\Omega\times (\epsilon,T),$ 
 \end{center}
 and $|\nabla\U|, |\nabla h|$ are bounded by a constant which 
 depending only on $n$,$\Omega$, $T$, $C_0$,$ a_1$,...,$a_l$, $N_1$,...,$ N_l$, $ \epsilon$.\\
 Let $\U_0$ be a smooth plurisubharmonic function on $\bar{\Omega}$ such that $\U_0\leq u_0$ and
 \begin{center}
 $\U_0=\sum N_j\log |z-a_j|-(1+\epsilon)C_0\;\;$ near $\partial\Omega$.
 \end{center}
 Let $h_0$ be a harmonic function on $\bar{\Omega}$ such that
 \begin{center}
 $h_0=\sum N_j\log |z-a_j|-(1+\epsilon)C_0\;\;$ on $\partial\Omega$.
 \end{center}
 Let $\zeta: \R\to [0,1]$ be a smooth increasing function such that $\zeta (0)=0$, $\zeta(\epsilon)=1$.\\
 We consider the functions\\
 \begin{center}
 $\underline{\varphi}(z,t)=(1-\zeta(t))h_0(z)+\zeta (t) \varphi(z,t),$\\
 $\U(z,t)=\U_0(z)+\underline{\varphi}(z,t)-h_0(z)+B_1\rho (z),$
 \end{center}
 where 
 $\rho\in C^{\infty}(\bar{\Omega})$ such that $dd^c\rho\geq dd^c|z|^2$ and $\rho|_{\partial\Omega}=0$
  and $B_1\geq\frac{1}{n}\exp(\sup\dot{\underline{\varphi}}+3A\sup |\underline{\varphi}|+\sup |g|)$
  such that $\U (., t)\in SPSH(\bar{\Omega})$ for any $t\in [0,T]$. \\
 We have
 \begin{flushleft}
 $\U(z,0)=\U_0(z)+B_1\rho (z)\leq u_0(z)$,\\
 $\U|_{\partial\Omega\times (0,T)}=\underline{\varphi}|_{\partial\Omega\times (0,T)}
 \leq \varphi|_{\partial\Omega\times (0,T)},$\\
 $
 \dot{\U}-\log\det\U_{\alpha\bar{\beta}}+A\U-g(z,t)\leq \dot{\underline{\varphi}}-n\log B_1
 +A(\sup \U_0+2\sup|\underline{\varphi}|)+\sup |g|
 \leq 0.
 $
 \end{flushleft}
 It follows from Corollary \ref{compa log} that $\U\leq u$.
 
 Now, let $h:\bar{\Omega}\times [0,T)\to \R$ be a spatial harmonic function satisfying
 $$h|_{\partial\Omega\times[0,T)}=\varphi|_{\partial\Omega\times [0,T)}.$$
 We have
 $$\begin{cases}
 \U\leq u\leq h \mbox{ on } \bar{\Omega}\times [0,T),\\
  \U=\varphi=h \mbox{ on } \partial\Omega\times (\epsilon,T).
 \end{cases}$$
 Then for any $(z,t)\in \partial\Omega\times(\epsilon,T)$, we have
 $$\U_{\eta}(z,t)\leq u_{\eta}(z,t)\leq h_{\eta}(z,t),$$
 where $\eta$ is an interior normal vector of $\partial\Omega$ at $z$, $\|\eta\|=1$.\\
 Hence
 $$|u_{\eta}(z,t)|\leq \sup\{|\U_{\eta}(z,t)|,|h_{\eta}(z,t)|\}\leq B_2,$$
 where $B_2>0$ depends only on $n$,$\Omega$, $T$, $C_0$,$ a_1$,...,$a_l$, $N_1$,...,$ N_l$, $ \epsilon$.\\
 \end{proof}
 \begin{Lem}\label{gradient interior.lem}
 For any $T> 2\epsilon>0$ and $K\Subset\bar{\Omega}\setminus\{a_1,...,a_l\}$,
 there exists $B>0$ depending only on $n,\Omega, T, C_0, a_1,...,a_l$, $N_1,..., N_l,$
   $K,\epsilon$ such that
  $$\sup_{K\times (2\epsilon,T)}|\nabla u|\leq B.$$
 \end{Lem}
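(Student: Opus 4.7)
The plan is to obtain an interior gradient bound by a parabolic Bernstein argument. The ingredients available are the boundary gradient bound from Lemma \ref{gradient boundary. lem} on $\partial\Omega\times(\epsilon,T)$, together with the $L^\infty$ bounds on $u$ and $\dot u$ on compact subsets of $(\bar\Omega\setminus\{a_1,\ldots,a_l\})\times[\epsilon,T]$ provided by Lemma \ref{utminusu0.lem.weak sec}, the lower bound \eqref{condition.u0.a priori}, and Lemma \ref{dotu.lem}.

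Fix $T'\in(2\epsilon,T)$. I choose a spatial cutoff $\chi\in C^\infty(\bar\Omega,[0,1])$ equal to $1$ on a neighborhood of $K\cup\partial\Omega$ and vanishing in small balls around each $a_j$, together with a time cutoff $\eta\in C^\infty([\epsilon,T'],[0,1])$ with $\eta(\epsilon)=0$ and $\eta\equiv1$ on $[2\epsilon,T']$. I then apply Theorem \ref{max prin} to the auxiliary function
\[
W(z,t)=\eta(t)\chi(z)|\nabla u(z,t)|^2\exp\bigl(-Mu(z,t)+N|z|^2\bigr)
\]
on $\bar\Omega\times[\epsilon,T']$, where $M,N>0$ are large constants to be chosen. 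On the parabolic boundary $W$ vanishes at $t=\epsilon$ thanks to $\eta$, while on $\partial\Omega\times[\epsilon,T']$ (where $\chi\equiv1$) it is controlled by the boundary gradient bound and the $L^\infty$ control of $u$. It therefore suffices to bound $W$ at any interior maximum.

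At such a maximum $(z_0,t_0)$, using $\nabla\log W=0$ and $(\partial_t-\Delta_t)\log W\geq0$ with $\Delta_t=u^{\alpha\bar\beta}\partial_\alpha\partial_{\bar\beta}$, I differentiate the equation once to obtain $(\partial_t-\Delta_t)u_\gamma=-Au_\gamma+g_\gamma$, and a standard computation then gives
\[
(\partial_t-\Delta_t)|\nabla u|^2\leq -2A|\nabla u|^2+C_1|\nabla u|-\sum_\gamma u^{\alpha\bar\beta}\bigl(u_{\gamma\alpha}u_{\bar\gamma\bar\beta}+u_{\gamma\bar\beta}u_{\alpha\bar\gamma}\bigr).
\]
In parallel, $(\partial_t-\Delta_t)(-Mu+N|z|^2)=-M\dot u+Mn-N\sum u^{\alpha\bar\alpha}$, which is bounded above up to the term $-N\sum u^{\alpha\bar\alpha}$ by Lemma \ref{dotu.lem}, and the condition $\nabla\log W=0$ converts $\nabla|\nabla u|^2$ into a multiple of $\nabla u$, generating a positive Bernstein term of order $M^2\,u^{\alpha\bar\beta}u_\alpha u_{\bar\beta}$.

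The main obstacle is the cutoff Laplacian $u^{\alpha\bar\beta}(\log\chi)_{\alpha\bar\beta}$, which involves the unknown quantity $\sum u^{\alpha\bar\alpha}$ that is a priori unbounded above on $\mathrm{supp}\,\chi$; this is precisely what the $N|z|^2$ weight controls, since $-N\sum u^{\alpha\bar\alpha}$ absorbs it once $N$ is taken large in terms of $\|\chi\|_{C^2}$. Choosing then $M$ large enough so that $M^2\,u^{\alpha\bar\beta}u_\alpha u_{\bar\beta}$ dominates the $O(M)$ drift coming from $-M\dot u+Mn$ and the $O(|\nabla u|)$ term from $\nabla g$ forces $|\nabla u(z_0,t_0)|^2\leq C$, which translates into a bound on $W$ depending only on the stated parameters. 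Since $\chi\equiv1$ near $K$ and $\eta\equiv1$ on $[2\epsilon,T']$, one concludes $|\nabla u|\leq C$ on $K\times[2\epsilon,T']$ uniformly in $T'<T$, which gives the lemma upon letting $T'\nearrow T$.
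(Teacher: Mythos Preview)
Your argument has a genuine gap at the decisive step. After substituting the first-derivative condition $\nabla\log W=0$, the term $M^2\,u^{\alpha\bar\beta}u_\alpha u_{\bar\beta}$ enters $(\partial_t-\Delta_t)\log W$ with a \emph{positive} sign. Since at an interior maximum one has $(\partial_t-\Delta_t)\log W\geq 0$, a large positive contribution on the right-hand side yields no information; it cannot ``dominate'' anything toward a bound on $|\nabla u|$. Concretely, writing your weight as $\gamma(u)=-Mu$, the computation produces a good term $-\gamma''(u)\sum_\alpha|u_\alpha|^2/\lambda_\alpha$ from $\Delta_t\gamma(u)$ and a bad term of order $+2(\gamma')^2\sum_\alpha|u_\alpha|^2/\lambda_\alpha$ from the first-derivative substitution. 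With $\gamma''=0$ and $(\gamma')^2=M^2$ the bad term survives uncontrolled, and the third-order Bochner terms are well known to be insufficient to absorb it in the complex Monge--Amp\`ere setting. This is exactly why the paper follows B\l ocki and takes $\gamma(u)=\log(3C_0-u)-\log(2C_0-u)$: one then has $\gamma''-2(\gamma')^2\geq C_0^2/(3C_0-u)^4>0$, which furnishes the missing negative term and closes the inequality to a bound on $\sum_\alpha|u_\alpha|^2/\lambda_\alpha$ and $\sum_\alpha 1/\lambda_\alpha$; combined with the equation (which controls $\prod_\alpha\lambda_\alpha$ via $\dot u+Au-g$) this yields the gradient bound.

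There is also a secondary issue with your cutoff: working with $\log\chi$ produces $-u^{\alpha\bar\beta}(\log\chi)_{\alpha\bar\beta}$, which is of size $\frac{C}{\chi}\sum_\alpha u^{\alpha\bar\alpha}$ near $\{\chi=0\}$, so no fixed $N$ in $N|z|^2$ can absorb it. The paper sidesteps this by replacing the $C^\infty$ cutoff with the polynomial weight $\psi_0=\bigl(\sum_j|f_j|^2\bigr)|\nabla u|^2=\prod_j|z-a_j|^{2M}|\nabla u|^2$: this vanishes at each $a_j$ (so the maximum of $\psi$ cannot occur there), while the resulting extra terms $\sum_j|(f_j)_\alpha||f_j|/\sum_j|f_j|^2$ are only of order $\sum_j C/|z-a_j|$ and enter linearly in $1/|\nabla u|$, which the final bound accommodates.
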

 \begin{proof}
 We will use the technique of Blocki as in \cite{Blo08}. 
 
 By Lemma \ref{dotu.lem}, there exists $M>0$ depending only on 
 $n,\Omega, T, C_0,\epsilon,$ $N_1,...,N_l$
 such that
 \begin{equation}\label{dotu.eq.bolocki}
 \begin{cases}
 \dot{u}\leq M(\sum\log_{-}|z-a_j|+1)\;\; \mbox{on}\;\bar{\Omega}\times [\epsilon,T),\\
 \dot{u}+Au-g\leq M(\sum\log_{-}|z-a_j|+1)\;\;\mbox{on}\; \bar{\Omega}\times [\epsilon,T),
 \end{cases}
 \end{equation}
 where $\log_{-}|z-a_j|=\max\{-\log|z-a_j|, 0\}$.
 
 Let $f_1,...,f_N \in \{f\in \mathcal{O}(\C^n,C): \log |f|\leq \sum\limits_{j=1}^lM\log |z-a_j|+O(1)\}$ satisfy
 $$\prod_{j=1}^l|z-a_j|^{2M}=\sum_{j=1}^N |f_j|^2.$$
 The $f_j$ are in fact polynomials, which could be written explicitly, but tediously, using
 the multinomial formula. 
 Note that the choice of $f_1,...,f_N$ depends only on $a_1,...,a_l, M$. Then there exists $C_1>0$
 depending only on $\Omega, a_1,...,a_l,M$ such that
 \begin{equation}\label{fjestimate.eq}
 \dfrac{\sum\limits_{j=1}^N|(f_j)_{\alpha}|.|f_j|}{\sum\limits_{j=1}^N|f_j|^2}
 \leq\sum\limits_{j=1}^l\dfrac{C_1}{|z-a_j|} \mbox{  on  }\Omega.
 \end{equation}
 Without loss of generality, we can assume that $0\in\Omega$ and $C_0>1$.
  We denote, for $(z,t)\in \bar{\Omega}\times [\epsilon,T)$,
 \begin{center}
 $k(t)=(n+1)\log (t-\epsilon),$\\
 $\gamma (u)=\log (3C_0-u)-\log (2C_0-u),$\\
 $\psi_j=f_j\nabla u, \;\forall j=1,...,N,$\\
 $\psi_0=\sum_{j=1}^{n}|\psi_j|^2,$\\
 $\psi=\log\psi_0+\gamma(u)+k(t)+\eta |z|^2,$
 \end{center}
 where $\eta=\frac{1}{4(diam (\Omega)+1)^2}$.
 
 We will show that
 $$\sup_{\bar{\Omega}\times [\epsilon,T']}\psi\leq \tilde{B},\;\forall T'\in (\epsilon,T),$$
 where $\tilde{B}$ depends only on $n,\Omega, T, C_0, a_1,...,a_l$, $N_1,..., N_l, K,\epsilon$.
 
 Let $(z_0,t_0)\in\bar{\Omega}\times [\epsilon,T']$ satisfy
 $$\psi (z_0,t_0)=\sup_{\bar{\Omega}\times [\epsilon,T']}\psi.$$
  By an orthogonal change of coordinates, we can assume that $(u_{\alpha\bar{\beta}}(z_0,t_0))$ is diagonal.
   For convenience, we denote $\lambda_{\alpha}=u_{\alpha\bar{\alpha}}(z_0,t_0)$.
  
  Assume that 
  \begin{equation}\label{psigeqb1.eq.blocki}
  \psi (z_0,t_0)\geq B_1+1,
 \end{equation}
 where $B_1\geq\sup_{\partial\Omega\times (\epsilon,T)}\psi$ is a constant depending only on 
 $n,\Omega, T, C_0$, $a_1,...,a_l$, $N_1,..., N_l$, $K,\epsilon$.
 
 Then $z_0\in\Omega\setminus\{a_1,...,a_l\}$,$t_0\in (\epsilon,T']$. Hence, $\dot{\psi}(z_0,t_0)\geq 0$,
 $\nabla\psi (z_0,t_0)=0$ and $(\psi_{\alpha\bar{\beta}}(z_0,t_0))$ is negative. We have, at $(z_0,t_0)$,
 \begin{equation}\label{blocki.eq.derivatives}
 \begin{cases}
 \dfrac{(\psi_0)_{\alpha}}{\psi_0}=-\gamma'(u) u_{\alpha}-\eta\bar{z}_{\alpha},\\
 L(\psi):=\dot{\psi}-\sum u^{\alpha\bar{\beta}}\psi_{\alpha\bar{\beta}}\geq 0,
 \end{cases}
 \end{equation}
 where $(u^{\alpha\bar{\beta}})$ is the transpose of 
  inverse matrix of Hessian matrix  $(u_{\alpha\bar{\beta}})$.\\
 We compute, at $(z_0,t_0)$,
 \begin{flushleft}
 $\begin{array}{ll}
 L(\gamma(u)+k(t)+\eta |z|^2)&=\gamma'(u)\dot{u}+k'(t)
 -\gamma'(u)\sum u^{\alpha\bar{\beta}}u_{\alpha\bar{\beta}}
 -\gamma''(u)\sum u^{\alpha\bar{\beta}}u_{\alpha}u_{\bar{\beta}}-\eta\sum u^{\alpha\bar{\alpha}}\\
 &=\gamma'(u)(\dot{u}-n)+k'(t)-\gamma''(u)\sum\dfrac{|u_{\alpha}|^2}{\lambda_{\alpha}}
 -\eta\sum\dfrac{1}{\lambda_{\alpha}}. 
 \end{array}$
 $\begin{array}{ll}
 L(\log\psi_0)&=\dfrac{\dot{\psi}_0}{\psi_0}-\sum \dfrac{(\psi_0)_{\alpha\bar{\alpha}}}{\lambda_{\alpha}\psi_0}
 +\sum \dfrac{|(\psi_0)_{\alpha}|^2}{\lambda_{\alpha}\psi_0^2}\\
 &=\dfrac{\dot{\psi}_0}{\psi_0}-\sum\limits_{j,\alpha}
 \dfrac{|(\psi_j)_{\alpha}|^2+|(\psi_j)_{\bar{\alpha}}|^2+
 2Re(\langle(\psi_j)_{\alpha\bar{\alpha}},\psi_j\rangle)}{\lambda_{\alpha}\psi_0}
 +\sum \dfrac{|(\psi_0)_{\alpha}|^2}{\lambda_{\alpha}\psi_0^2}\\
 &\leq \dfrac{\dot{\psi}_0}{\psi_0}
 -\sum\limits_{j,\alpha} \dfrac{2Re(\langle(\psi_j)_{\alpha\bar{\alpha}},\psi_j\rangle)}{\lambda_{\alpha}\psi_0}
  +\sum \dfrac{|(\psi_0)_{\alpha}|^2}{\lambda_{\alpha}\psi_0^2}\\
 &=\dfrac{\dot{\psi}_0}{\psi_0}-\sum\limits_{j,\alpha} 
 \dfrac{2Re(\lambda_{\alpha}(f_j)_{\alpha}\bar{f}_ju_{\bar{\alpha}})}{\lambda_{\alpha}\psi_0}
 -\sum\limits_{j,\alpha}
 \dfrac{2Re(\langle f_j \nabla u_{\alpha\bar{\alpha}},\psi_j\rangle)}{\lambda_{\alpha}\psi_0}
 +\sum \dfrac{|(\psi_0)_{\alpha}|^2}{\lambda_{\alpha}\psi_0^2}\\
 &=\dfrac{2Re\langle\nabla\dot{u},\nabla u \rangle}{|\nabla u|^2}
 -\sum\limits_{j,\alpha} 
  \dfrac{2Re((f_j)_{\alpha}\bar{f}_ju_{\bar{\alpha}})}{\psi_0}
  -\sum\limits_{\alpha}
   \dfrac{2Re(\langle \nabla u_{\alpha\bar{\alpha}},\nabla u\rangle)}{\lambda_{\alpha}|\nabla u|^2}
   +\sum \dfrac{|(\psi_0)_{\alpha}|^2}{\lambda_{\alpha}\psi_0^2}\\
   &=\dfrac{2Re(\langle L(\nabla u), \nabla u \rangle)}{|\nabla u|^2}
   -\sum\limits_{j,\alpha} \dfrac{2Re((f_j)_{\alpha}\bar{f}_ju_{\bar{\alpha}})}{\psi_0}
   +\sum \dfrac{|(\psi_0)_{\alpha}|^2}{\lambda_{\alpha}\psi_0^2}\\
   &=\dfrac{2Re(\langle -A\nabla u +\nabla g, \nabla u \rangle)}{|\nabla u|^2}
      -\sum\limits_{j,\alpha} \dfrac{2Re((f_j)_{\alpha}\bar{f}_ju_{\bar{\alpha}})}{\psi_0}
      +\sum \dfrac{|(\psi_0)_{\alpha}|^2}{\lambda_{\alpha}\psi_0^2}\\
      &\leq \dfrac{2|\nabla g|}{|\nabla u|}+\sum\limits_{j=1}^l\dfrac{2nC_1}{|z-a_j||\nabla u|}
      +\sum \dfrac{|(\psi_0)_{\alpha}|^2}{\lambda_{\alpha}\psi_0^2}
 \end{array}$
 \end{flushleft}
 Then
 \begin{equation}\label{Lpsi before change gra.eq.blocki}
 \begin{array}{ll}
 L(\psi)&\leq \dfrac{2|\nabla g|}{|\nabla u|}+\sum\limits_{j=1}^l\dfrac{2nC_1}{|z-a_j||\nabla u|}
  +\sum \dfrac{|(\psi_0)_{\alpha}|^2}{\lambda_{\alpha}\psi_0^2}+\gamma'(u)(\dot{u}-n)\\
  &+k'(t_0)-\gamma''(u)\sum\dfrac{|u_{\alpha}|^2}{\lambda_{\alpha}}
        -\eta\sum\dfrac{1}{\lambda_{\alpha}}.
       \end{array} 
   \end{equation}  
   By \eqref{blocki.eq.derivatives}, we have,
   \begin{equation}\label{changegradient.eq.blocki}
   \sum \dfrac{|(\psi_0)_{\alpha}|^2}{\lambda_{\alpha}\psi_0^2}
   \leq 2\sum \dfrac{(\gamma'(u))^2|u_{\alpha}|^2+\eta^2|z_{\alpha}|^2}{\lambda_{\alpha}}
   \leq 2(\gamma'(u))^2\sum\dfrac{|u_{\alpha}|^2}{\lambda_{\alpha}}
   +\dfrac{\eta}{2}\sum\dfrac{1}{\lambda_{\alpha}}.
   \end{equation}
   Note that
   \begin{flushleft}
   $\gamma'(u)=\dfrac{1}{2C_0-u}-\dfrac{1}{3C_0-u}=\dfrac{C_0}{(2C_0-u)(3C_0-u)},$\\
   $\gamma''(u)=\dfrac{1}{(2C_0-u)^2}-\dfrac{1}{(3C_0-u)^2}
   =\dfrac{C_0(5C_0-2u)}{(2C_0-u)^2(3C_0-u)^2},$\\
   $\gamma''(u)-2(\gamma'(u))^2\geq \dfrac{C_0^2}{(3C_0-u)^4}.$
\end{flushleft} 
Then, we have, by \eqref{Lpsi before change gra.eq.blocki}, \eqref{changegradient.eq.blocki},
\begin{flushleft}
$\begin{array}{ll}
 L(\psi)&\leq \dfrac{2|\nabla g|}{|\nabla u|}+\sum\limits_{j=1}^l\dfrac{2nC_1}{|z-a_j||\nabla u|}
  +\gamma'(u)(\dot{u}-n)\\
  &+k'(t_0)-\dfrac{C_0^2}{(3C_0-u)^4}\sum\dfrac{|u_{\alpha}|^2}{\lambda_{\alpha}}
        -\dfrac{\eta}{2}\sum\dfrac{1}{\lambda_{\alpha}}.
       \end{array}$
\end{flushleft}
By \eqref{dotu.eq.bolocki}, \eqref{psigeqb1.eq.blocki}, there exists $C_2>0$ depending only on
$n,\Omega, T, C_0$, $a_1,...,a_l$, $N_1,..., N_l$, $K,\epsilon$ such that
\begin{center}
$
 L(\psi)\leq C_2-M\sum\log |z-a_j|
  +k'(t_0)-\dfrac{C_0^2}{(3C_0-u)^4}\sum\dfrac{|u_{\alpha}|^2}{\lambda_{\alpha}}
        -\dfrac{\eta}{2}\sum\dfrac{1}{\lambda_{\alpha}}.
      $
\end{center}
We can also assume that $C_2-M\sum\log |z-a_j|>0$.

By the condition $L(\psi)|_{(z_0,t_0)}\geq 0$, we have,
\begin{equation}\label{lambdalambda.eq.blocki}
\dfrac{C_0^2}{(3C_0-u)^4}\sum\dfrac{|u_{\alpha}|^2}{\lambda_{\alpha}}
 +\dfrac{\eta}{2}\sum\dfrac{1}{\lambda_{\alpha}}
  \leq C_2-M\sum\log |z-a_j|+k'(t_0).
\end{equation}
Then
\begin{flushleft}
$\begin{array}{ll}
\dfrac{1}{\lambda_{\alpha}}
=\dfrac{\prod\limits_{\beta\neq\alpha}\lambda_{\beta}}{\prod\limits_{\beta=1}^n\lambda_{\beta}}
&=(\prod\limits_{\beta\neq\alpha}\lambda_{\alpha})e^{-\dot{u}-Au+g}\\
&\geq \left(\dfrac{\eta}{2}\right)^{n-1}(C_2-M\sum\log |z-a_j|+k'(t_0))^{-n+1}e^{-\dot{u}-Au+g}.
\end{array}$
\end{flushleft}
Hence, by \eqref{lambdalambda.eq.blocki},
\begin{flushleft}
$\begin{array}{ll}
|\nabla u|^2 &\leq \dfrac{(3C_0-u)^4}{C_0^2}\left(\dfrac{2}{\eta}\right)^{n-1}
(C_2-M\sum\log |z-a_j|+k'(t_0))^ne^{\dot{u}+Au-g}\\
&\leq C_2\dfrac{(3C_0-u)^4}{C_0^2}e^{\dot{u}+Au-g}(1+C_2-M\sum\log |z-a_j|)^n
(1+k'(t_0))^n\\
&\leq C_3\dfrac{(3C_0-u)^4}{C_0^2}e^{\dot{u}+Au-g}
(1+k'(t_0))^n\prod |z-a_j|^{-M/2},
\end{array}$
\end{flushleft}
where $C_3>0$ depends only on $n,\Omega, T, C_0$, $a_1,...,a_l$, $N_1,..., N_l$, $K,\epsilon$.

Then, by \eqref{dotu.eq.bolocki} and Lemma \ref{utminusu0.lem.weak sec}, we have,
\begin{equation}
|\nabla u|^2\leq C_4\dfrac{(3C_0+C(t_0)-u_0)^4}{C_0^2}\left(1+k'(t_0)\right)^n\prod |z-a_j|^{-3M/2},
\end{equation}
where $C(t_0)$ is defined as in Lemma \ref{utminusu0.lem.weak sec} and $C_4>0$ 
depends only on $C_3, M, a_1,...,a_l$.

Note that $u_0\geq\sum N_j\log |z-a_j|-C_0$. Then
\begin{equation}\label{1.eq.blocki}
|\nabla u|^2 \leq C_5(1+k'(t_0))^n\prod |z-a_j|^{-2M},
\end{equation}
where $C_5>0$ depends only on $n,\Omega, T, C_0$, $a_1,...,a_l$, $N_1,..., N_l$, $K,\epsilon$.

By \eqref{psigeqb1.eq.blocki} and \eqref{1.eq.blocki}, we have,
\begin{equation}
\dfrac{e^{B_1-2}}{(t_0-\epsilon)^{n+1}}
=e^{B_1-2-k(t_0)}
\leq \psi_0(z_0,t_0)
\leq C_5 (k'(t_0)+1)^n=C_5\left(\dfrac{t_0+n+1-\epsilon}{t_0-\epsilon}\right)^n.
\end{equation}
Hence, $t_0\geq t_1>\epsilon$, where $t_1$ depends only on 
$n,\Omega, T, C_0$, $a_1,...,a_l$, $N_1,..., N_l$, $K,\epsilon$. We have, by \eqref{1.eq.blocki},
\begin{equation}\label{2.eq.blocki}
\psi (z_0,t_0)\leq \tilde{B},
\end{equation}
where $\tilde{B}>B_1+1>0$ depends only on 
$n,\Omega, T, C_0$, $a_1,...,a_l$, $N_1,..., N_l$, $K,\epsilon$.
 
 Note that $\tilde{B}$ is independent of $T'$. Then,
 $$\sup_{\bar{\Omega}\times [\epsilon,T)}\psi\leq \tilde{B}.$$
 In particular, there exists $B>0$ depending only on 
$n,\Omega, T, C_0$, $a_1,...,a_l$, $N_1,..., N_l$, $K,\epsilon$ such that
$$\sup_{K\times (2\epsilon,T)}|\nabla u|\leq B.$$
 \end{proof}
 \subsection{Higher order estimates}
 \begin{Lem}\label{2 order boundary}
  For any $T>\epsilon>0$, there exists $B>0$ depending only on $n,\Omega, T, C_0, a_1,...,a_l$, $N_1,..., N_l,
   \epsilon$ such that
  $$\sup_{\partial\Omega\times (\epsilon,T)}|D^2 u|\leq B.$$
 \end{Lem}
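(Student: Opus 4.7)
The plan is to prove boundary $C^2$ bounds by splitting the Hessian into three blocks relative to $\partial\Omega$: purely tangential, mixed tangential-normal, and doubly normal. Fix a boundary point $z_0\in\partial\Omega$ and choose holomorphic coordinates near $z_0$ so that $z_0=0$, the real direction $\partial/\partial x_n$ is the outer normal, and $\partial\Omega$ is locally defined by a smooth function whose first and second jets are standard. Throughout I shall freely use the already-established bounds $\sup |u|, \sup|\dot u|$ (Lemma \ref{utminusu0.lem.weak sec} and Lemma \ref{dotu.lem}) and $\sup_{\partial\Omega\times(\epsilon,T)}|\nabla u|\leq B$ (Lemma \ref{gradient boundary. lem}), together with the subsolution $\underline{u}$ and the harmonic majorant $h$ constructed in Lemma \ref{gradient boundary. lem}, which satisfy $\underline{u}\leq u\leq h$ with equality on $\partial\Omega\times(\epsilon,T)$.

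For purely tangential derivatives I would use the identity $(u-\varphi)|_{\partial\Omega\times(\epsilon,T)}=0$. Differentiating this twice by vector fields $\xi,\eta$ tangent to $\partial\Omega$ extended smoothly off the boundary (in the standard way) gives, on $\partial\Omega$,
\begin{equation*}
u_{\xi\bar\eta}=\varphi_{\xi\bar\eta}+\bigl(\text{commutator terms involving }\nabla\rho\text{ and its derivatives}\bigr)\cdot\partial_n(u-\varphi),
\end{equation*}
and since $\varphi\in C^{4}(\bar\Omega\times[0,T])$ and $\partial_n u$ is bounded by Lemma \ref{gradient boundary. lem}, one obtains $|u_{\xi\bar\eta}|\leq B$.

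For mixed derivatives I would adapt the Guan--Caffarelli--Kohn--Nirenberg--Spruck barrier method. For a unit complex tangential vector $\xi$, consider the operator $T=\xi\cdot\partial-(\xi\cdot\partial\rho/\partial_n\rho)\partial_n$, which is tangential to $\partial\Omega$, so $T(u-\varphi)=0$ on $\partial\Omega\times(\epsilon,T)$. Near $z_0$ on a set $\Omega_r=\Omega\cap\{|z|<r\}$, form the barrier
\begin{equation*}
v=\pm T(u-\varphi)+M_1(u-\underline{u})+M_2|z-z_0|^2-M_3\rho,
\end{equation*}
with constants $M_1,M_2,M_3$ chosen large. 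A direct computation using the linearized operator $L=\partial_t-\sum u^{\alpha\bar\beta}\partial_{\alpha\bar\beta}+A$ shows $Lv\geq 0$ on $\Omega_r\times(\epsilon,T)$ once the already-bounded quantities $u,\dot u,\nabla u, g,\varphi$ are absorbed; on the parabolic boundary of $\Omega_r\times(\epsilon,T)$ the choice of $M_i$ forces $v\geq 0$ (using the initial $\underline u(\cdot,\epsilon)\leq u(\cdot,\epsilon)$ bound with margin coming from $|z-z_0|^2$). Then $v\geq 0$ throughout, and since $v=0$ at $z_0$, one gets $\partial_n v(z_0,t)\geq 0$, which rearranges into $|\partial_n T(u-\varphi)(z_0,t)|\leq B$; that is the required mixed second-derivative bound.

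For the normal-normal derivative $u_{n\bar n}$ at a boundary point I would exploit the equation itself. Writing the Hessian in the tangent/normal block form and using the positive lower bound on $\det u_{\alpha\bar\beta}=\exp(\dot u+Au-g)$ (the right-hand side is bounded on $\partial\Omega\times(\epsilon,T)$), cofactor expansion along the normal row yields an identity in which $u_{n\bar n}$ appears multiplied by a minor dominated by the tangential-tangential block determinant, the remainder being controlled by the previously estimated entries; plurisubharmonicity gives the matching lower bound. This step is classical and causes no difficulty once Steps 1 and 2 are in hand. The principal difficulty of the lemma is Step 2, the mixed-derivative barrier: choosing $M_1,M_2,M_3$ so that the barrier is simultaneously a subsolution of $L$ and nonnegative on the parabolic boundary, given that the initial data $u_0$ is singular at $a_1,\dots,a_l$, requires that $r$ be chosen small enough to stay away from the singular set, which the compact localization near $z_0\in\partial\Omega\subset\bar\Omega\setminus\{a_j\}$ permits.
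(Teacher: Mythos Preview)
Your outline is the standard Caffarelli--Kohn--Nirenberg--Spruck/Guan boundary $C^2$ scheme, and this is exactly what the paper has in mind: its proof simply observes that once $\dot u$ and $\nabla u$ are controlled near $\partial\Omega\times(\epsilon,T)$, the argument is identical to the bounded case treated in \cite{Do15}. So the strategy is correct and matches the paper's.

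There is, however, one genuine gap in your write-up. You invoke only Lemma~\ref{gradient boundary. lem}, which bounds $|\nabla u|$ \emph{on} $\partial\Omega\times(\epsilon,T)$. But in Step~2 the barrier argument is run on the cylinder $\Omega_r\times(\epsilon,T)$, and you need $|\nabla u|$ bounded on all of that cylinder: the commutator terms in $L\bigl(T(u-\varphi)\bigr)$ involve $\nabla u$ in the interior of $\Omega_r$, and on the lateral piece $\{|z-z_0|=r\}\cap\Omega$ and on the bottom $\Omega_r\times\{\epsilon\}$ of the parabolic boundary you must dominate $\pm T(u-\varphi)$, which again requires an interior gradient bound. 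The boundary estimate alone does not supply this. The fix is exactly what the paper says: invoke Lemma~\ref{gradient interior.lem}, which gives $\sup_{K\times(2\epsilon,T)}|\nabla u|\leq B$ for any $K\Subset\bar\Omega\setminus\{a_1,\dots,a_l\}$; since the $a_j$ lie in $\Omega$, a closed collar neighbourhood of $\partial\Omega$ is such a $K$, and after a harmless shift $\epsilon\mapsto\epsilon/2$ you obtain the gradient bound on $\Omega_r\times(\epsilon,T)$ that your barrier needs. With that correction your argument goes through.
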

 \begin{proof}
    By Lemma \ref{dotu.lem} and Lemma \ref{gradient interior.lem}, we can estimate $\dot{u}$ 
    and $\nabla u$ near $\partial\Omega\times (\epsilon,T)$. Then the proof of this lemma is the same as
    the case $u_0\in L^{\infty}(\Omega)$ (see \cite{Do15}). 
    \end{proof}
    Using the 2-order estimates on $\partial\Omega\times (0,\epsilon)$, we will estimate $\Delta u$ 
    on $K\times (2\epsilon, T)$, for any $K\Subset\bar{\Omega}\setminus\{a_1,...,a_l\}$.
 \begin{Lem}
 For any $T> 2\epsilon>0$ and $K\Subset\bar{\Omega}\setminus\{a_1,...,a_l\}$,
  there exists $B>0$ depending only on $n,\Omega, T, C_0, a_1,...,a_l$, $N_1,..., N_l,$
    $K,\epsilon$ such that
   $$\sup_{K\times (2\epsilon,T)}\Delta u\leq B.$$
 \end{Lem}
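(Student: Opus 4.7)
I would use a parabolic Yau-type maximum principle argument for $\log \Delta u$, following the Blocki framework already employed for the gradient estimate in Lemma \ref{gradient interior.lem}. The key evolution inequality is
$$L(\log \Delta u) \leq -A + \frac{\Delta g}{\Delta u},$$
where $L = \partial_t - \sum u^{\alpha\bar\beta}\partial_\alpha\partial_{\bar\beta}$; this follows from the Remark after Theorem \ref{lap 2} applied to $\omega = dd^c|z|^2$ and $\omega' = dd^c u$, combined with the equation $\log\det u_{\alpha\bar\beta} = \dot u + Au - g$.

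Introduce an auxiliary function
$$H(z,t) = (t-\epsilon)\, W(z)\, \log\Delta u + \lambda u + \eta|z|^2,$$
where $W(z) = \prod_{j=1}^l|z-a_j|^{2M}$ is a polynomial weight vanishing at the singular points (as in the proof of Lemma \ref{gradient interior.lem}) and $M,\lambda,\eta > 0$ are to be chosen. The factor $(t-\epsilon)$ kills the $\log\Delta u$ contribution at $t=\epsilon$. The weight $W$ kills it at the $a_j$ and, combined with the lower bound $u \geq \sum N_j\log|z-a_j|-C_0-C(T)$ from Lemma \ref{utminusu0.lem.weak sec}, ensures uniform control near the singular points. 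The term $\lambda u$ (bounded above by $\lambda\sup\varphi$) contributes $\lambda(\dot u-n)$ to $L(H)$, controlled by Lemma \ref{dotu.lem}, and $\eta|z|^2$ produces the crucial $-\eta\sum u^{\alpha\bar\alpha}$ contribution to $L(H)$.

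At an interior maximum point $(z_0,t_0)$ of $H$, $L(H)\geq 0$. Combining this with the evolution inequality for $\log\Delta u$, the identities $L(\lambda u)=\lambda(\dot u-n)$ and $L(\eta|z|^2)=-\eta\sum u^{\alpha\bar\alpha}$, and the second inequality of Theorem \ref{lap 1}
$$\sum u^{\alpha\bar\alpha}\geq\left(\frac{\Delta u}{n\det u_{\alpha\bar\beta}}\right)^{1/(n-1)}\geq C_2(\Delta u)^{1/(n-1)}$$
(with $C_2>0$ since $\det u_{\alpha\bar\beta}=e^{\dot u+Au-g}$ is bounded above via Lemma \ref{dotu.lem}), one derives a polynomial-versus-logarithmic inequality forcing $\Delta u(z_0,t_0)\leq C_3$. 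Combined with Lemma \ref{2 order boundary} on $\partial\Omega\times[\epsilon,T)$ and the trivial bound at $t=\epsilon$, this gives $H\leq B_0$ on $\bar\Omega\times[\epsilon,T)$. On $K\times(2\epsilon,T)$, $W$ has positive lower bound (since $K$ avoids the $a_j$) and $u,|z|^2$ are bounded, so rearranging $H\leq B_0$ yields $\log\Delta u\leq B$ on $K\times(2\epsilon,T)$.

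The main obstacle is the interior computation, specifically the cross-terms $\sum u^{\alpha\bar\beta}W_\alpha(\log\Delta u)_{\bar\beta}$ and the zero-order term $(\log\Delta u)\Delta_u W$ arising from $L(W\log\Delta u)$. These are controlled by using $\nabla H(z_0,t_0)=0$ to substitute $\nabla\log\Delta u$ in terms of $\nabla W$, $\nabla u$ and $\nabla|z|^2$, together with the gradient bound of Lemma \ref{gradient interior.lem} and Cauchy--Schwarz, in direct analogy with the Blocki-type computation already carried out for the gradient.
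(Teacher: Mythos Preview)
Your approach differs from the paper's and contains a genuine gap in the handling of the cross terms. When you expand $L\bigl((t-\epsilon)W\log\Delta u\bigr)$ and then use $\nabla H(z_0,t_0)=0$ to eliminate $\nabla\log\Delta u$, one of the resulting terms is
\[
2(t-\epsilon)(\log\Delta u)\,\frac{1}{W}\sum u^{\alpha\bar\beta}W_{\alpha}W_{\bar\beta}\ \ge\ 0,
\]
which (since $|\nabla W|^2/W$ is bounded on $\bar\Omega$ for $M\ge 1$, but not identically zero) is of order $(\log\Delta u)\sum u^{\alpha\bar\alpha}$. This cannot be absorbed by $-\eta\sum u^{\alpha\bar\alpha}$ once $\log\Delta u$ is large, so the maximum-principle inequality does not close. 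The analogy with the Blocki gradient computation is misleading: there the weight enters as $\log\psi_0=\log\bigl(W|\nabla u|^2\bigr)=\log W+\log|\nabla u|^2$, i.e.\ \emph{additively} inside the logarithm, so that after using $\nabla\psi=0$ the substituted expression for $(\psi_0)_\alpha/\psi_0$ contains only $u_\alpha$ and $\bar z_\alpha$, never the quantity being estimated. With a multiplicative weight $W\cdot\log\Delta u$, the substitution unavoidably reintroduces $\log\Delta u$ on the right-hand side.

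The paper avoids weights altogether. Its test function is
\[
\phi=t\log\Delta u-u+u_{\epsilon}-B_3(t-\epsilon)+|z|^2,\qquad u_{\epsilon}:=u(\cdot,\epsilon),
\]
so no product-rule cross terms appear. Localization away from the $a_j$ is carried by $+u_{\epsilon}$: it is plurisubharmonic (hence $-\sum u^{\alpha\bar\beta}(u_{\epsilon})_{\alpha\bar\beta}\le 0$ in $L(\phi)$), while on $K$ it satisfies $u_{\epsilon}\ge u_0-C(\epsilon)\ge\sum N_j\log|z-a_j|-C_0-C(\epsilon)$, a lower bound depending only on the allowed data. Equally important is the sign $-u$ rather than your $+\lambda u$: it contributes $-\dot u$ to $L(\phi)$, and the combination $\log\Delta u-\dot u$, via the equation $\dot u=\log\det u_{\alpha\bar\beta}-Au+g$ and Theorem~\ref{lap 1}, becomes $(n-1)\log\sum u^{\alpha\bar\alpha}+O(1)$, which is then absorbed by the $-\sum u^{\alpha\bar\alpha}$ coming from $|z|^2$. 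Your choice $+\lambda u$ produces $+\lambda\dot u$ instead and loses this cancellation.
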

 \begin{proof}
 By Lemma \ref{utminusu0.lem.weak sec} and Lemma \ref{2 order boundary},
  there exist $B_1, B_2>0$ depending only on 
  $n,\Omega, T, C_0, a_1,...,a_l$, $N_1,..., N_l,  \epsilon$ such that
  \begin{equation}
  \sup_{\Omega\times (\epsilon,T)} (-u+u_{\epsilon}+|z|^2)\leq B_1,
  \end{equation}
  \begin{equation}\label{lap bound.eq.lap in}
  B_1+\sup_{\partial\Omega\times (\epsilon,T)}(t\log \Delta u -u+u_{\epsilon}+|z|^2) \leq B_2.
  \end{equation}
  where $u_{\epsilon}=u(.,\epsilon)$.
  
 We consider the function $\phi\in C^{\infty}(\bar{\Omega}\times [\epsilon,T))$ defined by
 $$\phi =t\log \Delta u -u+u_{\epsilon}-B_3 (t-\epsilon)+|z|^2,$$
 where $B_3=C_0(A+T+1)+\log (n!)+n+1$.

 We will  show that
  \begin{equation}\label{lap in.eq.lap in}
  \sup_{\bar{\Omega}\times (\epsilon,T)}\phi \leq B_2.
  \end{equation}
  Indeed, if there exists $(z_0,t_0)\in\bar{\Omega}\times[\epsilon,T)$ satisfying
  $$\phi (z_0,t_0)>B_2,$$
  then $z_0\in \Omega$, $t_0>\epsilon$. Without loss of generality, we can assume that
  $$\phi (z_0,t_0)=\sup_{\bar{\Omega}\times (\epsilon,t_0)}\phi.$$
  Denote by $L$ the operator
   $$L(\psi)=\dot{\psi}-\sum u^{\alpha\bar{\beta}}\psi_{\alpha\bar{\beta}},\;\;\forall
   \psi\in C^{\infty}(\bar{\Omega}\times [\epsilon,T)),$$
   where $(u^{\alpha\bar{\beta}})$ is the transpose of 
   inverse matrix of Hessian matrix  $(u_{\alpha\bar{\beta}})$.
   
   We have
   $$L(\phi)|_{(z_0,t_0)}\geq 0.$$
   We compute
   \begin{flushleft}
   $\begin{array}{ll}
   L(\phi)&=t\dfrac{\Delta\dot{u}}{\Delta u}+\log\Delta u-\dot{u}-B_3
   -t\sum u^{\alpha\bar{\beta}}(\log\Delta u)_{\alpha\bar{\beta}}
   +\sum u^{\alpha\bar{\beta}}(u_{\alpha\bar{\beta}}
   -(u_{\epsilon})_{\alpha\bar{\beta}})-\sum u^{\alpha\bar{\alpha}}\\[8pt]
   &\leq t\dfrac{\Delta\dot{u}}{\Delta u}+\log\Delta u-\dot{u}-B_3
      -t\sum u^{\alpha\bar{\beta}}(\log\Delta u)_{\alpha\bar{\beta}}+n-\sum u^{\alpha\bar{\alpha}}.
   \end{array}$
   \end{flushleft}
   Applying Theorem \ref{lap 2}, we have
   \begin{flushleft}
      $\begin{array}{ll}
      L(\phi) &\leq t\dfrac{\Delta\dot{u}}{\Delta u}+\log\Delta u-\dot{u}-B_3
      -t\dfrac{\Delta \log\det (u_{\alpha\bar{\beta}})}{\Delta u}+n-\sum u^{\alpha\bar{\alpha}}\\[8pt]
      &=t\dfrac{\Delta(\dot{u}-\log\det (u_{\alpha\bar{\beta}}))}{\Delta u}
      +\log\Delta u-\dot{u}-B_3+n-\sum u^{\alpha\bar{\alpha}}\\[8pt]
      &=t\dfrac{-A\Delta u+\Delta g}{\Delta u}
            +\log\Delta u-\dot{u}-B_3+n-\sum u^{\alpha\bar{\alpha}}\\[8pt]
      &\leq t\dfrac{\Delta g}{\Delta u}
                  +\log\Delta u-\dot{u}-B_3+n-\sum u^{\alpha\bar{\alpha}}.
      \end{array}$
         \end{flushleft}
Applying Theorem \ref{lap 1}, we have
$$\log\Delta u\leq \log n+\log\det (u_{\alpha\bar{\beta}})+(n-1)\log(\sum u^{\alpha\bar{\alpha}}).$$
Then
\begin{flushleft}
$\begin{array}{ll}
 L(\phi) &\leq t\dfrac{\Delta g}{\Delta u} +\log n+\log\det (u_{\alpha\bar{\beta}})+
 (n-1)\log(\sum u^{\alpha\bar{\alpha}})-\dot{u}-B_3+n-\sum u^{\alpha\bar{\alpha}}\\[8pt]
 &\leq t\dfrac{\Delta g}{\Delta u} +\log n-B_3+n+Au-g+(n-1)\log(\sum u^{\alpha\bar{\alpha}})
 -\sum u^{\alpha\bar{\alpha}}\\[8pt]
 &\leq t\dfrac{\Delta g}{\Delta u} +\log n-B_3+n+(A+1)C_0+\log ((n-1)!)\\[8pt]
 &=t\dfrac{\Delta g}{\Delta u} +\log (n!)-B_3+n+(A+1)C_0,
\end{array}$
\end{flushleft}
where third inequality holds due to the conditions \eqref{condition.g phi.a priori}, \eqref{condition.u0.a priori}
and the inequalities
\begin{center}
 $ (n-1)\log x-x \leq (n-1)\log(n-1)-(n-1), \;\; \forall x>0, $ 
\end{center}
and
\begin{center}
 $\dfrac{(n-1)^{n-1}}{(n-1)!}  \leq \exp \left(n-1\right).$
\end{center}
Hence 
$$L(\phi)|_{(z_0,t_0)}\leq C_0T+\log (n!)-B_3+n+(A+1)C_0<0.$$
We have a contradiction. Thus 
$$\sup_{\bar{\Omega}\times (\epsilon,T)}\phi \leq B_2.$$
In particular, there exists $B>0$ depending only on $n,\Omega, T, C_0, a_1,...,a_l$, $N_1,..., N_l,$
    $K,\epsilon$ such that
   $$\sup_{K\times (2\epsilon,T)}\Delta u\leq B.$$
  
 \end{proof}
 \section{Proof of Theorem \ref{main.description}}
 \subsection{Smoothness}
 As in the proof of Proposition \ref{exist unique.prop.sec weak}, we can construct a sequence of functions
 $u_m\in C^{\infty}(\bar{\Omega}\times [0,T))$ satisfying
 \begin{equation}
    \begin{cases}
    \begin{array}{ll}
    u_m(.,t)\in SPSH(\Omega),\\
    \dot{u}_m=\log\det (u_m)_{\alpha\bar{\beta}}-Au_m+f(z,t)\;\;\;&\mbox{on}\;\Omega\times (0,T),\\
    u_m=\varphi_m\rightrightarrows\varphi&\mbox{on}\;\partial\Omega\times [0,T),\\
    u_m=\varphi &\mbox{on}\;\partial\Omega\times (\epsilon_m,T),\\
    u_m=u_{0,m}\searrow u_0&\mbox{on}\;\bar{\Omega}\times\{ 0\},\\
    u=\lim\limits_{m\rightarrow \infty} u_m.
    \end{array}
    \end{cases}
    \end{equation}
    where $\epsilon_m\searrow 0$.
    
    Applying Theorem \ref{the.a priori}, we obtain, for any $K\Subset \bar{\Omega}\setminus\{a_1,...,a_l\}$
    and $0<\epsilon<T$,
    \begin{center}
    $\|u_m\|_{C^2(K\times[\epsilon,T))}\leq C_{K,\epsilon},$
    \end{center}
     where $C_{K,\epsilon}$ depends only on $n, \Omega, T, C_0, N_1,...,N_l, a_1,..., a_l,\epsilon,K$.
     
    It follows from $C^{2,\alpha}$-estimates (see, for example, \cite{Do15}) that
    \begin{center}
    $\|u_m\|_{C^{2,\gamma}(K\times[\epsilon,T))}\leq C_{K,\epsilon,\gamma},$
    \end{center}
    where $\gamma$ and $C_{K,\epsilon,\gamma}$ depend only on
     $n, \Omega, T, C_0, N_1,...,N_l, a_1,..., a_l,\epsilon,K$.
     
   By Ascoli theorem, we obtain
   \begin{equation}\label{Ascoli.eq.proofmain2}
   u_m\stackrel{C^{2,\gamma/2}(K\times[\epsilon,T'])}{\longrightarrow} u,\;\mbox{ as} \; m\rightarrow\infty,
   \end{equation}
   where $\epsilon<T'<T$.
   
   Then $u\in C^{2,\gamma/2}(K\times[\epsilon,T'] )$.  Applying regularity theorem (see, for example,
   \cite{Do15}), we have
   $u\in C^{\infty}(K\times (\epsilon,T'))$. Hence, $u\in C^{\infty}(K\times (0,T)).$
   
   Fix $0<r<\min\limits_{j\neq k}|a_j-a_k|$. We need to show that 
   $u\in C^{\infty}(B_r(a_j)\times (\epsilon_A(N_j),T))$ when $T>\epsilon_A(N_j)$.
   
   Fix $\epsilon>0$. If $A=0$, we have, for $t=\epsilon_0(N_j)+2\epsilon=\frac{N_j}{2n}+\epsilon$,
   \begin{center}
   $(dd^cu_m)^n=e^{\dot{u}_m(z,\epsilon_0(N_j)+2\epsilon)-f(z,\epsilon_0(N_j)+2\epsilon)}dV=d\mu.$
   \end{center}
   Applying Lemma \ref{dotu.lem} and Proposition \ref{singular.prop.weak sec}, we have, for $m\gg 1$,
   \begin{flushleft}
   $\begin{array}{ll}
   \exp\left(\dot{u}_m(z,\epsilon_0(N_j)+2\epsilon)-f(z,\epsilon_0(N_j)+2\epsilon)\right)
   &\leq C_1\exp\left(\frac{u_m(z,\epsilon_0+2\epsilon)-u_m(z,\epsilon)}{\epsilon_0(N_j)+\epsilon }\right)\\[6pt]
   &\leq C_2\exp\left(-\frac{u_m(z,\epsilon)}{\epsilon_0(N_j)+\epsilon} \right)\\[6pt]
   &\leq C_3\exp\left(-\frac{u_m(z,0)}{\epsilon_0(N_j)+\epsilon} \right)\\[6pt]
   &\leq C_3\exp\left(-\frac{u_0(z)}{\epsilon_0(N_j)+\epsilon} \right)\\[6pt]
   &=C_3\exp\left(-\frac{2n u_0(z)}{N_j+2n\epsilon} \right)
    \end{array}$
   \end{flushleft}
   where $C_1, C_2, C_3>0$ depend only on $\Omega, \epsilon, T, f, \varphi$.
   
   Then, we have, for $K\Subset B_r(a_j)$,
   \begin{flushleft}
   $\begin{array}{ll}
   \mu (K)
   &\leq C_3\int_K \exp\left(-\frac{2n u_0(z)}{N_j+2n\epsilon} \right) dV\\[8pt]
   &\leq C_4\int_K \exp\left(-\frac{2nN_j\log |z-a_j|}{N_j+2n\epsilon}\right) dV\\[8pt]
   &=C_4\int_K |z-a_j|^{-\frac{2nN_j}{N_j+2n\epsilon}}dV\\[8pt]
   &\leq C_5 (\int_K|z-a_j|^{-\frac{2nN_j}{N_j+n\epsilon}}dV)^{\frac{N_j+n\epsilon}{N_j+2n\epsilon}}
   (\int_KdV)^{\frac{n\epsilon}{N_j+2n\epsilon}}\\[8pt]
   &\leq C_6 (\int_KdV)^{\frac{n\epsilon}{N_j+2n\epsilon}}\\[8pt]
   &\leq C_7 (Cap(K))^2.
   \end{array}$
   \end{flushleft}
   where $C_4, C_5, C_6, C_7>0$ are independent of $m$. The last inequality holds due to \cite{AT84} and  \cite{Ze01}.
   
   Applying Kolodziej theorem (Theorem B \cite{Kol98}), we have 
   \begin{center}
   $\|u_m(.,\epsilon_A(N_j)+2\epsilon)\|_{L^{\infty}(B_r(a_j))}\leq C_8$,
   \end{center}
   where $C_8$ is independent of $m$. Then $\|u(.,\epsilon_A(N_j)+2\epsilon)\|_{L^{\infty}(B_r(a_j))}\leq C_8$.
   Applying the case $"u_0\in L^{\infty}(\Omega)"$ (see \cite{Do15}) , we have 
   $u\in C^{\infty}(B_r(a_j)\times (\epsilon_A(N_j)+2\epsilon,T))$.
   
   Let $\epsilon\to 0$. We have $u\in C^{\infty}(B_r(a_j)\times (\epsilon_A(N_j),T))$.
   
   If $A>0$, by the same arguments, we also obtain 
   $u\in C^{\infty}(B_r(a_j)\times (\epsilon_A(N_j),T))$.
   
   Thus $u\in C^{\infty}(Q)$.
   
   By \eqref{Ascoli.eq.proofmain2}, for any 
   $(z,t)\in \bar{\Omega}\setminus\{a_1,...,a_l\}\times (0,T)$,
   \begin{equation}\label{KRF.proofmain2}
   \dot{u}=\log\det (u_{\alpha\bar{\beta}})-Au+f(z,t).
   \end{equation}
   Taking the limits, we obtain \eqref{KRF.proofmain2} on $Q$.
  \subsection{Singularity}Let $j\in\{1,...,l\}$. 
 By Proposition \ref{singular.prop.weak sec}, we have $u(a_j,t)=-\infty$ for any $t\in [0,\epsilon_A(n_j))$.
 
 We need to show that if $n_j=N_j$ then $\nu_{u(., t)}(a_j)=k(N_j,t)$.
 
 By the proof of Proposition \ref{singular.prop.weak sec}, we have, for any $t\in [0,\epsilon_A(N_j))$,
 \begin{equation}\label{nugeq.proofmain2}
 \nu_{u(.,t)}(a_j)\geq k(N_j,t),
 \end{equation}
 Then, it remains to show that
 \begin{equation}\label{nuleq.proofmain2}
 \nu_{u(.,t)}(a_j)\leq k(N_j,t),
 \end{equation}
 for any $t\in [0,\epsilon_A(N_j))$.
 
 If there exist $t_0\in (0,\epsilon_A(N_j))$ and $\epsilon>0$  such that
 $$\nu_{u(.,t_0)}(a_j)\geq k(N_j,t_0)+\epsilon$$
 then, by Proposition \ref{singular.prop.weak sec}, there exists $t_1>\epsilon_A(N_j)$ such that
 $$\nu_{u(.,t_1)}(a_j)>0.$$
 This contradicts  the smoothness of $u$ on $Q$. Then we obtain \eqref{nuleq.proofmain2}.
 
 Combining \eqref{nugeq.proofmain2} and \eqref{nuleq.proofmain2}, we obtain
 $$\nu_{u(.,t)}(a_j)= k(N_j,t).$$
 \subsection{Continuity at zero}
 \begin{flushleft}
 \emph{a) Convergence in $L^1$.}
 \end{flushleft}
 Applying Lemma \ref{utminusu0.lem.weak sec}, we have
 \begin{equation}\label{liminf.conti0.proofmain2}
 \liminf\limits_{t\to 0}u(z,t)\geq u_0(z),
 \end{equation}
 for any $z\in\bar{\Omega}$.
 
  Note that $u$ is the limit of a decreasing sequence of smooth functions, then $u\in USC(\bar{\Omega}\times [0,T))$. We have
 \begin{equation}\label{limsup.conti0.proofmain2}
 \limsup\limits_{t\to 0}u(z,t)\leq u_0(z),
 \end{equation}
 for any $z\in\bar{\Omega}$.
 
 Combining \eqref{liminf.conti0.proofmain2} and \eqref{limsup.conti0.proofmain2}, we obtain
 $$\lim\limits_{t\to 0}u(z,t)=u_0(z).$$
 
 Hence, by the dominated convergence theorem, $u(., t)\rightarrow u_0$ in $L^1$, as $t\to 0$.
 
 \begin{flushleft}
 \emph{b) Convergence in capacity}
 \end{flushleft}
 Let $\epsilon>0$ and $\tilde{\Omega}$ be a neighbouhood of $\bar{\Omega}$. We need to show that there exists
 an open set $U_{\epsilon}$ such that 
 $$Cap_{\tilde{\Omega}}(\bar{\Omega}\setminus U_{\epsilon})\leq\epsilon$$
 and
 \begin{center}
 $u(.,t_{k_m})\rightrightarrows u_0$ on $U_{\epsilon}\cap \bar{\Omega}$.
 \end{center}
  Indeed, by quasicontinuity of plurisubharmonic function,
   there exists $K_{\epsilon}\Subset\Omega$ such that 
 $$Cap_{\tilde{\Omega}}(\bar{\Omega}\setminus Int (K_{\epsilon}))\leq\epsilon$$ 
 and $u_0$ is continuous on 
 $K_{\epsilon}$.
 
 By Lemma \ref{utminusu0.lem.weak sec} and Dini theorem, for any $t_k\searrow 0$, there exists
 a subsequence $t_{k_m}\searrow 0$ such that
 \begin{center} 
 $u(.,t_{k_m})\rightrightarrows u_0$ on $K_{\epsilon}$, as $m\to\infty$.
 \end{center}
 Then 
 \begin{center} 
  $u(.,t)\rightrightarrows u_0$ on $K_{\epsilon}$, as $t\to 0$.
  \end{center}
 
\end{document}